\newtheorem{theorem}{Theorem}[section]
\newtheorem{lemma}[theorem]{Lemma}
\newtheorem{proposition}[theorem]{Proposition}
\theoremstyle{definition}
\newtheorem{assumption}[theorem]{Assumption}
\newtheorem{remark}[theorem]{Remark}
\numberwithin{equation}{section}
 \theoremstyle{plain}
 \numberwithin{equation}{section} 
 \numberwithin{figure}{section} 
 \theoremstyle{plain}
 \theoremstyle{plain}
 \theoremstyle{remark}
 \newtheorem*{acknowledgement*}{Acknowledgement}
\newcommand{\cF}{{\mathcal F}}
\newcommand{\cG}{{\mathcal G}}
\newcommand{\cH}{{\mathcal H}}
\newcommand{\cL}{{\mathcal L}}
\newcommand{\cN}{{\mathcal N}}
\newcommand{\te}{{\theta}}
\newcommand{\Om}{{\Omega}}
\newcommand{\om}{{\omega}}
\newcommand{\ve}{{\varepsilon}}
\newcommand{\del}{{\delta}}
\newcommand{\gam}{{\gamma}}
\newcommand{\Gam}{{\Gamma}}
\newcommand{\vf}{{\varphi}}
\newcommand{\Sig}{{\Sigma}}
\newcommand{\sig}{{\sigma}}
\newcommand{\al}{{\alpha}}
\newcommand{\be}{{\beta}}
\newcommand{\ka}{{\kappa}}
\newcommand{\la}{{\lambda}}
\newcommand{\vp}{{\varpi}}
\newcommand{\bbN}{{\mathbb N}}
\newcommand{\bbR}{{\mathbb R}}
\newcommand{\bbZ}{{\mathbb Z}}
\newcommand{\bbI}{{\mathbb I}}
\newcommand{\brF}{{\bar F}}
\begin{document}
\title[]{Berry-Esseen type estimates for nonconventional sums}%
 \vskip 0.1cm
 \author{Yeor Hafouta and Yuri Kifer\\
\vskip 0.1cm
 Institute  of Mathematics\\
Hebrew University\\
Jerusalem, Israel}%
\address{
Institute of Mathematics, The Hebrew University, Jerusalem 91904, Israel}
\email{yeor.hafouta@mail.huji.ac.il, kifer@math.huji.ac.il}%

\thanks{ }
\subjclass[2000]{Primary: 60F05 Secondary: 60J05}%
\keywords{central limit theorem, Berry-Esseen theorem, mixing, nonconventional
 setup.}%
\dedicatory{  }
 \date{\today}
\begin{abstract}\noindent
We obtain Berry-Esseen type estimates for "nonconventional" expressions
of the form $\xi_N=\frac{1}{\sqrt{N}}
\sum_{n=1}^N(F(X(q_{1}(n)),...,X(q_{\ell}(n)))-\brF)$
where $X(n)$ is a sufficiently fast mixing vector process with some
moment conditions and stationarity properties, $F$ is a continuous
function with polynomial growth and certain regularity properties,
$\brF=\int Fd(\mu\times...\times\mu)$, $\mu$ is the distribution of
$X(0)$ and $q_{i}(n)=in$ for $1\leq i\leq k$ while
for $i>k$ they are positive functions taking integer values on integers
with some growth conditions which are satisfies, for instance, when
they are polynomials of increasing degrees. Our setup is similar to \cite{KV1}
where a nonconventional functional central limit theorem was obtained and
the present paper provides estimates for the convergence speed.
As a part of the study we provide answers for the crucial question on
positivity of the limiting variance $\lim_{N\to\infty}$Var$(\xi_N)$ which
was not studied in \cite{KV1}. Extensions to the continuous time case
 will be
 discussed as well. As in \cite{KV1} our results are applicable
 to stationary processes generated by some classes of sufficiently well
 mixing Markov chains and dynamical systems.
 \end{abstract}
\maketitle
\markboth{Y. Hafouta and Y. Kifer }{Berry Esseen Theorem}
\renewcommand{\theequation}{\arabic{section}.\arabic{equation}}
\pagenumbering{arabic}

\renewcommand{\theequation}{\arabic{section}.\arabic{equation}}
\pagenumbering{arabic}

\section{Introduction}\label{sec1}\setcounter{equation}{0}

The classical Berry-Esseen theorem provides a uniform estimate of the error
term in the central limit theorem for a sum of mean zero independent
identically distributed (i.i.d.) random variables $\{ X(n)\}^\infty_{n=1}$.
Namely, let $F_n$ be the distribution function of $\frac{1}{\sigma\sqrt{n}}
\sum_{i=1}^{n}X(i)$ where $\sigma=\sqrt {E(X(1))^{2}}>0$ and $\Phi$ be the
standard normal distribution function then
\begin{equation}\label{1.1}
\sup_{x\in\bbR}|F_{n}(x)-\Phi(x)|\leq\frac{CE|X(1)|^{3}}{\sigma^{3}\sqrt{n}}
\end{equation}
(see \S 6 of Ch. III in \cite{Shr}) where $C$ is an absolute constant which by
efforts of many researchers was optimized by now to a number a bit less
than $1/2$.

Motivated partially by the research on nonconventional ergodic theorems
(the name comes from \cite{Fur}) the study of nonconventional limit theorems
was initiated in \cite{Ki1}. More recently a functional central limit theorem
was proved in \cite{KV1} for normalized nonconventional sums of the form
\begin{equation}\label{1.2}
\xi_{N}(t)=\frac{1}{\sqrt{N}}\sum_{Nt\geq n\geq1}\big(F(X(q_{1}(n)),...,
X(q_{\ell}(n)))-\brF\big)
\end{equation}
where $\{X(n),  n\geq 0\}$ is a sufficiently fast mixing vector valued
process with some stationarity properties satisfying certain moment
conditions, $F$ is a continuous function with polynomial growth rate
and certain regularity properties, $\bar{F}=\int Fd(\mu\times...\times\mu)$,
$\mu$ is the common distribution of $X(n)$'s and $q_{j}(n)=jn$ for
$1\leq j\leq k$ while $q_{j}(n)$ for $k<j\leq\ell$ are positive functions
taking integer values on integers and satisfying certain growth conditions.
In this paper we derive Berry-Esseen type estimates for the convergence rate
in such nonconventional limit theorems.

During last 50 years central limit theorems were extended to weakly dependent
sequences of random variables and to martingale differences and corresponding
Berry-Esseen type estimates of the speed of convergence were obtained, as well
(see, for instance, \cite{HH}, \cite{Ri}, \cite{RR}, \cite{Cou}
 and references there). We observe though that summands in nonconventional sums
 appearing in (\ref{1.2}) are usually strongly long range dependent (even when
 $X(n),n\geq 1$ are independent) so the
 results for the weakly dependent case are not applicable here. Still, it was
 shown in \cite{KV1} that under natural conditions nonconventional sums can
 be splitted into $\ell$ subsums and each of the latter can be approximated by
 a martingale. We will show that, actually, in the arithmetic progression case
  $q_j(n)=jn,\, j=1,...,\ell$ the whole nonconventional sum can be approximated
  by one martingale which will enable us to apply one of Berry-Esseen type
  results for martingales mentioned above. Still, in order to do so
  we will need to obtain appropriate asymptotic covariance estimates for
  nonconventional summands.
 We observe that when not all $q_j(n)$'s are linear but, say, $q_j(n),\,
 j=k+1,...,\ell$ grow faster, for instance, polynomially as in \cite{KV1}
 then we have to deal with several martingales with respect to different
 filtrations which requires additional considerations described in the
 concluding Section \ref{sec6}.

 As (\ref{1.1}) and more advanced results show Berry-Esseen type estimates
 (with an absolute constant) depend crucially on variances of the corresponding
 sums which appear in some form in the denominators of corresponding
 bounds. In the standard (conventional) setup the conditions which ensure
 linear growth in the number of summands of these variances are well known
 for stationary sequences since \cite{IL}. On the other hand, the limiting
 behavior of the variance $\xi_N$ in (\ref{1.2}) was not studied in \cite{KV1}
 in spite of the fact that a meaningful central limit theorem requires the
 limit of Var$\xi_N$ as $N\to\infty$ to be positive. Some partial results in
 this direction were obtained in \cite{Ki1} and \cite{HK}. Ensuring
 positivity of the limiting variance and obtaining appropriate lower bounds
 for it is especially important in Berry-Esseen type estimates and we provide
 here a rather complete answer concerning this question. Namely, we show
 that under appropriate mixing conditions the positivity question for the
 limiting variance of $\xi_N$ can be reduced to the same question for the
 $\ell$-dimensional process constructed of independent copies of the process
 $X(n),\, n\geq 0$ which is pluged in the function $F$. If $X(n),\, n\geq 0$
 is stationary then (in the $k=\ell$ case) this $\ell$-dimensional process
  is stationary, as well,
 and we can rely on the well known results concerning the latter (see
 \cite{IL} and the next section).

 The structure of this paper is the following. In the next Section \ref{sec2}
 we describe precisely our setup and formulate our main results. In Section
 \ref{sec3} we derive some auxiliary estimates. In Sections \ref{sec4} and
 \ref{sec5} we prove our main theorems. In order to increase readability of
 the paper we do not treat the most general case in the main part and postpone
 extensions and generalizations till the concluding Section \ref{sec6}. There
 we consider also the case of independent $X(n)$'s where without relying on
 martingale results but employing a more direct method we are able to provide
 substantially better estimates than in the general situation.

\section{Preliminaries and main results}\label{sec2}\setcounter{equation}{0}

\subsection{Setup and assumptions}

Our setup consists of a $\wp$-dimensional stochastic
process $\{X(n)\}_{n\geq0}$ on a probability space $(\Om,\cF,P)$
and a nested family of $\sigma-algebras$ $\cF_{k,l}$, $-\infty\leq k\leq l
\leq\infty$
such that $\cF_{k,l}\subset\cF_{k',l'}$ if $k'\leq k$
and $l'\geq l$. As usual (see \cite{Br}) the dependence between two sub
$\sigma-algebras$ $\cG,\cH\subset\cF$ will be measured by the expressions
\begin{equation}\label{2.1}
\varpi_{q,p}(\cG,\cH)=\sup\{||E(g|\cG)-Eg||_{p}\,:\thinspace g\in L^{q}
(\Omega,\cH,P) \mbox{ and  }  ||g||_{q}\leq1\}
\end{equation}
and we refer the reader to \cite{Br} for relations between various dependence
coefficients. Set also
\begin{equation}\label{2.2}
\varpi_{q,p}(n)=\sup_{k\geq 0}\varpi_{q,p}(\cF_{-\infty,k},\cF_{k+n,\infty}).
\end{equation}

Our results below can be obtained assuming that $X(n)$ is measurable with
respect to $\cF_{n,n}$ without special assumptions on the function $F$
beyond measurability similarly to \cite{HK}. Nevertheless, we prefer here
the setup from \cite{KV1} which allows applications to dynamical systems.
 Thus, we introduce approximation rate coefficients
\begin{equation}\label{2.3}
\be(q,r)=\sup_{k\geq0}||X(k)-E(X(k)|\cF_{k-r,k+r})||_{q}.
\end{equation}
We will not require stationarity of the process $\{X(n), n>0\}$
assuming only that the distribution of $X(n)$ does not depend on
$n$ and the joint distribution of $(X(n),X(n'))$ depends only on
$n-n'$ which we write for further reference by
\begin{equation}\label{2.4}
X(n)\thicksim\mu  \mbox{ and  } (X(n),X(n'))\thicksim\mu_{n-n'}
\end{equation}
where $Y\thicksim\mu$ means that $Y$ has $\mu$ for it's distribution,
denoted also $\mu=\cL(Y)$.

Next, let $F=F(x_{1},...,x_{\ell})$, $x_{j}\in\bbR^{\wp}$
be a function on $\bbR^{\wp\ell}$ such that for some $K,\iota>0$,
$\ka\in(0.1]$ and all $x_{i},y_{i}\in\bbR^{\wp}$, $i=1,...,\ell$,
\begin{eqnarray}\label{2.5}
&|F(x)-F(y)|\leq K(1+\sum_{i=1}^{\ell}(|x_{i}|^{\iota}+|y_{i}|^{\iota}))
\sum_{i=1}^{\ell}|x_{j}-y_{j}|^{\ka}\\
&\mbox{and}\,\,\, |F(x)|\leq K(1+\sum_{i=1}^{\ell}|x_{i}|^{\iota})\nonumber
\end{eqnarray}
where $x=(x_{1},...,x_{\ell}),  y=(y_{1},...,y_{\ell})$.
To simplify the formulas we assume a centering condition
\begin{equation}\label{2.6}
\brF=\int F(x_1,...,x_\ell)d\mu(x_{1})...d\mu(x_{\ell})=0
\end{equation}
which is not really a restriction since we can always replace $F$
by $F-\brF.$ Our main goal is obtaining Berry-Esseen and covariance
type estimates for $\xi_{N}(t),\thinspace t\in[0,T]$ defined
in (\ref{1.2}) (with $\brF=0$). For each $\te>0$, set
\begin{equation}\label{2.7}
\gam_{\te}^{\te}=||X(n)||_{\te}^{\te}=\int|x|^{\te}d\mu.
\end{equation}
Our results rely on the following assumptions (similar to \cite{KV1}).
\begin{assumption}\label{ass2.1}
With $d=(\ell-1)\wp$ there exits $\infty>p,q\geq1$, $b\geq2$ ,
$\al,\la\geq0$ and $\del,m>0$ (these numbers will be called the
initial parameters) with $\del<\ka-\frac{d}{p}$ satisfying
\begin{eqnarray}\label{2.8}
\te(q,p,\al,1)=\sum_{n\geq1}n^{\al}\varpi_{q,p}(n)<\infty,\\
\Lambda(q,\del,\la,1)=\sum_{r=1}^{\infty}r^{\la}(\be(q,r))^{\del}
<\infty,\label{2.9} \\
\gam_{m}<\infty,\gam_{bq\iota}<\infty; \mbox{ with }\frac{1}{b}
\geq\frac{1}{p}+\frac{\iota+2}{m}+\frac{\del}{q}, \label{2.10}
\end{eqnarray}
while conditions on $\al$ and $\la$ will be specified in the statements
below.
\end{assumption}

As in \cite{KV1} it will be useful to represent the function
$F=F(x_{1},...,x_{\ell})$ in the form
\begin{equation}\label{2.11}
F=F_{1}(x_{1})+...+F_{\ell}(x_{1},...,x_{\ell})
\end{equation}
where for $i<\ell$,
 \begin{equation}\label{2.12}
 F_i(x_1,...,x_i)=\int F(x_1,...,x_\ell)d\mu(x_{i+1})...d\mu(x_\ell)-
 \int F(x_1,...,x_\ell)d\mu(x_i)...d\mu(x_\ell)
 \end{equation}
and
\begin{equation}\label{2.13}
F_{\ell}(x_{1},...,x_{\ell})=F(x_{1},...,x_{\ell})-\int F(x_{1},...,
x_{\ell})d\mu(x_{\ell})
\end{equation}
which ensures that
\begin{equation}\label{2.14}
\int F_{i}(x_{1},...,x_{i-1},x_{i})d\mu(x_{i})=0\mbox{ } \forall
x_{1},...,x_{i-1}.
\end{equation}

Next, assume that $q_j(n)=jn$ for $j=1,...,k\leq\ell$ while when
$\ell\geq j>k$ we have $q_j(n+1)-q_j(n)\to\infty$ and $q_j(\ve n)-q_{j-1}(n)
\to\infty$ as $n\to\infty$ for each $\ve>0$. Following \cite{KV1} we will
use the representation
\begin{equation}\label{2.15}
\xi_{N}(t)=\sum_{i=1}^{k}\xi_{i,N}(it)+\sum_{i=k+1}^{\ell}\xi_{i,N}(t)
\end{equation}
where for $1\leq i\leq k$,
\begin{equation}\label{2.16}
\xi_{i,N}(t)=\frac{1}{\sqrt{N}}\sum_{n=1}^{[\frac{Nt}{i}]}F_{i}\left(X(n),...
,X(in)\right)
\end{equation}
and for $i>k$
\begin{equation}
\xi_{i,N}(t)=\frac{1}{\sqrt{N}}\sum_{n=1}^{[Nt]}F_{i}(X(q_{1}(n)),...,
X(q_{i}(n)))\label{2.17}
\end{equation}
 The following result was proved in \cite{KV1}.

\begin{theorem}\label{thm2.2} Suppose that Assumption  \ref{ass2.1} holds
true with $b=2$ and $\al=\la=0$. Then the $\ell-$dimensional process
$\{\xi_{i,N}(t)\}_{i=1}^{\ell}$
converges in distribution as $N\to\infty$ to a vector Gaussian process
$\{\eta_{i}(t)\}_{i=1}^{\ell}$ with stationary independent mean zero
increments and covariances
\[
E(\eta_{i}(s)\eta_{j}(t))=\min(s,t)D_{i,j}=\lim_{N\to\infty}E(\xi_{i,N}(s)
\xi_{j,N}(t))
\]
where $D_{i,j}=0$ if $i\ne j$ and $\max(i,j)>k$. This together with
(\ref{2.15}) yields that the limiting variance exists and has the form
\[
\lim_{N\to\infty}\mbox{Var}\xi_N(t)=\lim_{N\to\infty}E\xi_N^2(t)=t\sig^2=
t(\sig^2_0+\sig^2_1)
\]
where
\[
\sig^2_0=\lim_{N\to\infty}E(\sum_{i=1}^k\xi_{i,N}(i))^2=
\sum^k_{i=1}iD_{i,i}+2\sum_{1\leq i<j\leq k}iD_{i,j}
\]
\[
\mbox{and}\,\,\quad\sig_1^2=\lim_{N\to\infty}E(\sum_{i=k+1}^\ell\xi_{i,N}(1))^2
=\sum^\ell_{i=k+1}D_{i,i}.
\]
Moreover, the process $\xi_{N}(\cdot)$ converges in distribution
to the Gaussian process $\eta(\cdot)$ which can be represented
in the form $\eta(t)=\sum_{i=1}^{k}\eta_{i}(it)+\sum_{i=k+1}^{\ell}\eta_{i}(t)$
which may have dependent increments.
\end{theorem}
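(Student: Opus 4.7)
The plan is to build the proof around the decomposition $F=F_1+\cdots+F_\ell$ from (2.11)--(2.13), the point of which is the annihilation identity (2.14): the integral of $F_i$ in its last argument vanishes pointwise in the remaining ones. Substituting $X(q_1(n)),\ldots,X(q_i(n))$ for the arguments and conditioning on a $\sigma$-algebra that does not "see" $X(q_i(n))$, property (2.14) together with the mixing coefficients $\varpi_{q,p}(n)$ and the approximation coefficients $\be(q,r)$ will force the conditional expectations of the summands of $\xi_{i,N}$ to be small in $L^2$. This is the engine that produces, for each $i$, a martingale approximation of $\xi_{i,N}(t)$ with respect to a filtration built from $\cF_{-\infty,q_i(n)+r_N}$ for a slowly growing $r_N$ (the shift $r_N$ absorbs the error from replacing $X(k)$ by its conditional expectation given $\cF_{k-r_N,k+r_N}$ via (2.3) and Assumption 2.1).

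Given these approximating martingales $M_{i,N}(t)$, I would first establish the finite-dimensional convergence of $(\xi_{1,N},\ldots,\xi_{\ell,N})$. By the Cram\'er--Wold device this reduces to a one-dimensional CLT for linear combinations $\sum_i c_i M_{i,N}(t_i)$; the latter is itself (up to a small $L^2$-error) a martingale, so one can invoke a standard martingale CLT provided the conditional quadratic variation converges in probability to a deterministic constant and a Lindeberg-type condition holds. Both ingredients follow from the moment bounds in (2.10) and from computing pairwise asymptotic covariances of the $\xi_{i,N}$'s, which is the technical core of the argument.

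The covariance computation splits into three regimes. For $i,j\le k$, the increments $X(in),X(jn)$ live on an arithmetic grid and one gets $D_{i,j}$ as a convergent sum of stationary two-point correlations of $F_i,F_j$, controlled via $\varpi_{q,p}$ and $\be(q,r)$. For $\max(i,j)>k$ with $i\ne j$, the hypothesis $q_j(n+1)-q_j(n)\to\infty$ and $q_j(\ve n)-q_{j-1}(n)\to\infty$ guarantees that the arguments of $F_i$ and $F_j$ at indices $n,n'$ eventually separate by arbitrarily large gaps; combined with (2.14) and the mixing decay this forces $E(\xi_{i,N}(s)\xi_{j,N}(t))\to 0$, which gives $D_{i,j}=0$ in this regime. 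For $i=j>k$, only the "diagonal" correlations between indices $n$ and $n'$ with $|n-n'|$ bounded survive, again by the gap condition, yielding a single convergent series $D_{i,i}$. Once all $D_{i,j}$'s are identified and the off-diagonal vanishing established, the covariance structure claimed in the theorem drops out, and summing the increments along $\xi_N(t)=\sum_{i\le k}\xi_{i,N}(it)+\sum_{i>k}\xi_{i,N}(t)$ gives the stated decomposition $\sigma^2=\sigma_0^2+\sigma_1^2$.

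Finally, I would prove tightness of each $\xi_{i,N}(\cdot)$ on $[0,T]$ by a Billingsley/Kolmogorov moment criterion: using Assumption~2.1 (in particular the block of moment inequalities under (2.10)) together with the martingale approximation, one obtains bounds of the form $E|\xi_{i,N}(t)-\xi_{i,N}(s)|^{2b}\le C|t-s|^b$ for some $b\ge 2$, which is sufficient. The main obstacle, in my view, is not any single step but the careful synchronization of the two small parameters: the mixing/approximation truncation $r_N$ used to turn each $\xi_{i,N}$ into a genuine martingale must be chosen large enough for the Lindeberg and variance-convergence conditions of the martingale CLT, yet small enough (relative to the gap growth of the $q_j$'s) to preserve the asymptotic orthogonality between different $i$'s; getting these rates compatible with the exponents $p,q,b,\al,\la,\del,m$ of Assumption~\ref{ass2.1} is where the bookkeeping becomes delicate.
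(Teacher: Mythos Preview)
Your outline is a faithful sketch of the Kifer--Varadhan argument, but note that the present paper does not prove Theorem~\ref{thm2.2} at all: it is quoted verbatim from \cite{KV1} (``The following result was proved in \cite{KV1}''), so there is no in-paper proof to compare against. What you have written is essentially the strategy of \cite{KV1} itself---the $F=F_1+\cdots+F_\ell$ decomposition with the vanishing property (2.14), martingale approximation of each $\xi_{i,N}$ via the $Y_{i,n,r}$ truncations, covariance asymptotics yielding $D_{i,j}=0$ for $i\ne j$ with $\max(i,j)>k$, and tightness from moment bounds---and the paper under review only \emph{uses} these ingredients (see (\ref{5.1})--(\ref{5.3}) and the surrounding discussion) to build its own Berry--Esseen machinery.
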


\subsection{Statement of main results.}

In order to make this paper more readable we will focus on the case
$k=\ell$ and introduce several extensions in Section 6 (among them,
results for $k<\ell$). In general, uniform Berry-Esseen type estimates
can only be meaningful if the asymptotical variance $\sig^2=\lim_{N\to\infty}
E\xi^2_N(1)$ is positive which can be seen already in (\ref{1.1}). Some
conditions for positivity of $\sig^2$
were obtained in Theorem 2.3 from \cite{HK} but the following theorem provides
a substantially stronger and more general result.

\begin{theorem}\label{thm2.3}
Suppose that $k=\ell$ and that Assumption \ref{ass2.1} holds true with $b=2$
and $\al,\la\geq1$. Let $\{X^{(i)}(n)\}_{n\geq0}\  i=1,...,\ell$
be $\ell$ independent copies of the process $\{X(n)\}_{n\geq0}$
and set
\[
Z_{n}=F(X^{(1)}(n),X^{(2)}(2n),...,X^{(\ell)}(\ell n)) \mbox{ and }
\Sigma_{N}=\sum_{n=1}^{N}Z_{n}.
\]
Then the limit
\[
s^{2}=\lim_{n\to\infty}\frac{1}{n}\mbox{Var}\Sigma_{n}
\]
exists. Moreover, $\sig^2>0$ if and only if $s^{2}>0$ and the
latter conditions holds true if and only if  there exists no representation
of the form
\[
Z_{n}=V_{n+1}-V_{n},  n=0,1,2...
\]
where $\{V_{n}\}_{n=1}^{\infty}$ is a square integrable weakly (i.e. in the
wide sense) stationary process. Furthermore, $s^2=0$ if and only if Var$\Sig_N$
is bounded in $N$ and then for all $N\geq 2$,
\[
\mbox{Var}\xi_N\leq CN^{-1}\ln^2 N
\]
for some $C>0$ independent of $N$.
\end{theorem}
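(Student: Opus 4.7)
The plan is to analyze the independent-copy sum $\Sig_N$ first, via the decomposition of $F$ given in \eqref{2.11}--\eqref{2.13}, and then transfer the conclusions to $\xi_N$ through the mixing-based comparison that underlies Theorem \ref{thm2.2}. First I would verify existence of $s^2$ by establishing summability of $\mbox{Cov}(Z_0,Z_m)$. Writing $Z_n=\sum_{i=1}^\ell Z_n^{(i)}$ with $Z_n^{(i)}=F_i(X^{(1)}(n),\ldots,X^{(i)}(in))$, the centering condition \eqref{2.14} in the last variable combined with independence of the copies makes the families $\{Z_n^{(i)}\}_n$ and $\{Z_m^{(j)}\}_m$ orthogonal for $i\ne j$, so $\mbox{Cov}(Z_0,Z_m)=\sum_i\mbox{Cov}(Z_0^{(i)},Z_m^{(i)})$. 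Using the approximation rate $\be(q,r)$ of \eqref{2.3} and the mixing coefficient $\vp_{q,p}$ of \eqref{2.1}--\eqref{2.2} applied on the independent copies, one obtains a decay $|\mbox{Cov}(Z_0^{(i)},Z_m^{(i)})|\leq Cm^{-(1+\ve)}$ under $\al,\la\geq1$. Wide-sense stationarity of $\{Z_n\}$ follows from \eqref{2.4} plus independence of copies, so $N^{-1}\mbox{Var}(\Sig_N)\to s^2=\sum_i s_i^2$ with $s_i^2:=\mbox{Var}(Z_0^{(i)})+2\sum_{m\geq1}\mbox{Cov}(Z_0^{(i)},Z_m^{(i)})$.

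The coboundary characterization is classical Hilbert-space material for weakly stationary sequences with summable covariances. If $Z_n=V_{n+1}-V_n$ then $\Sig_N$ telescopes and $\|\Sig_N\|_2\leq 2\|V_0\|_2$, so $\mbox{Var}(\Sig_N)$ is bounded and $s^2=0$. Conversely, boundedness of $\|\Sig_N\|_2$ allows extracting a weak $L^2$ cluster point and setting $V_n$ as its appropriate backward shifts, yielding a weakly stationary $\{V_n\}$ with $Z_n=V_{n+1}-V_n$. Hence $s^2=0$ is equivalent to $\mbox{Var}(\Sig_N)$ being bounded, which is equivalent to $Z_n$ being a coboundary.

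For the link $\sig^2>0\Leftrightarrow s^2>0$, I would show that $iD_{i,i}=s_i^2$ by computing both as limits of normalized variances of $\sum_{n=1}^N F_i$, using the mixing/approximation assumptions to replace, for large $n$, the joint law of $(X(n),X(2n),\ldots,X(in))$ (whose pairwise gaps are $\geq n$) by that of the independent copies $(X^{(1)}(n),\ldots,X^{(i)}(in))$. The key observation is then that the formula in Theorem \ref{thm2.2} can be rewritten as $\sig^2=\mbox{tr}(MD)$, where $M=(\min(i,j))_{i,j=1}^\ell$ is the Gram matrix of the indicator functions $\bbI_{[0,1]},\ldots,\bbI_{[0,\ell]}$ and hence positive definite, while $D=(D_{i,j})$ is the covariance matrix of the Gaussian vector $(\eta_1(1),\ldots,\eta_\ell(1))$ and hence positive semi-definite. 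Positive definiteness of $M$ forces $\mbox{tr}(MD)=0\Leftrightarrow D=0$, and by Cauchy--Schwarz on the PSD matrix $D$ this is equivalent to each diagonal entry $D_{i,i}$ vanishing, hence to each $s_i^2$ vanishing, hence to $s^2=\sum_i s_i^2$ vanishing.

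The hardest step will be the variance bound $\mbox{Var}(\xi_N)\leq CN^{-1}\ln^2 N$ when $s^2=0$. The previous step yields for each $i$ a weakly stationary $V_n^{(i)}$ with $Z_n^{(i)}=V_{n+1}^{(i)}-V_n^{(i)}$; substituting the corresponding identity into $\sum_{n=1}^N F_i(X(n),\ldots,X(in))$ for the \emph{original} process produces only an approximate telescoping whose residual measures the deviation of the joint law of $(X(n),\ldots,X(in))$ from the independent-copy product law. Tracking the residual through $\vp_{q,p}$ and $\be(q,r)$, and summing via an Abel-type argument, is expected to introduce a $\ln N$ factor in the $L^2$ norm, hence $\ln^2 N$ in the variance. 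The delicate bookkeeping required to show that this logarithmic accumulation does not degrade to a larger power of $N$ is the main technical obstacle; it would naturally be carried out through auxiliary estimates in Section \ref{sec3}.
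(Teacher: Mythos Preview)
Your treatment of the existence of $s^2$ and the coboundary characterization is essentially the same as the paper's: both reduce to standard facts about weakly stationary sequences with summable covariances (the paper cites Bradley), using the orthogonality $EZ^{(i)}_nZ^{(j)}_m=0$ for $i\neq j$ that you identify. One small slip: under $\al=\la=1$ you get $\sum_m m|\mbox{Cov}(Z_0,Z_m)|<\infty$ (this is what is needed and what the paper proves in \eqref{4.6}), not a pointwise bound $Cm^{-(1+\ve)}$.

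Your trace argument for the equivalence $\sig^2>0\Leftrightarrow s^2>0$ is genuinely different from the paper's and is a nice shortcut. The identity $iD_{i,i}=s_i^2$ is correct and can be obtained from the same block-comparison lemma the paper uses (Lemma \ref{lem4.1}): for $|n-l|=m$ fixed and $\min(n,l)$ large, the index set $\{jn,jl:1\le j\le i\}$ splits into $i$ well-separated pairs, and the lemma replaces $(X(n),\ldots,X(in),X(l),\ldots,X(il))$ by independent copies, so $b_{i,i}(n,l)\to E[Z^{(i)}_mZ^{(i)}_0]$. The paper instead proves the equivalence and the $\ln^2N$ bound \emph{together} by an induction on $\ell$; your linear-algebra route decouples the equivalence, which is cleaner, but leaves you without machinery for the quantitative bound.

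That is where the real gap lies. The sentence ``substituting the coboundary identity $Z_n^{(i)}=V_{n+1}^{(i)}-V_n^{(i)}$ into $\sum_n F_i(X(n),\ldots,X(in))$ produces an approximate telescoping'' does not make sense: $V_n^{(i)}$ is an abstract $L^2$-limit on the product space carrying the independent copies and has no realization as a functional of the original process $X$, so there is nothing to substitute. What you actually need to transfer is the \emph{variance} bound, and the obstruction is that for small $n$ the tuple $(X(n),\ldots,X(in))$ is far from independent, so the covariances $b_{i,i}(n,l)$ and their $\Sigma$-counterparts agree only when $\min(n,l)$ is large. The paper handles this by a geometric block decomposition: set $N_\ell^{(j)}\approx N(1-\frac{1}{2\ell})^j$ and write $S_{\ell,N}$ as a sum of $O(\ln N)$ increments $S_{\ell,N}^{(2j)}$ over $[N_\ell^{(j)},N_\ell^{(j-1)})$. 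On each such block all indices exceed $N_\ell^{(j)}\gtrsim$ the block length, so Lemma \ref{lem4.1} gives $|\mbox{Var}\,S_{\ell,N}^{(2j)}-\mbox{Var}\,\Sigma_{\ell,N}^{(2j)}|\le c_1$ uniformly (see \eqref{4.8}); since the $\Sigma$-increments have bounded variance by stationarity and the coboundary hypothesis, so do the $S$-increments, and Cauchy--Schwarz over $O(\ln N)$ blocks produces the $\ln^2N$. An induction on $\ell$ then closes the argument for the full sum. This block-splitting at geometric scales, together with the uniform comparison \eqref{4.8}, is the missing idea; an ``Abel-type'' summation of residuals will not produce it, because the residuals are not small for $n$ of order $1$ and there is no telescoping structure on the $X$-side to exploit.
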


We observe that this theorem remains true with essentially the same
proof also in the more general case $k<\ell$ described above.
Actually, in this case $\sig^2>0$ unless $F_j=0$ for all $j=k+1,k+2,...,\ell$
$\mu\times\cdots\times\mu$-almost surely (a.s.) (see Section \ref{sec6.2}).
The above result reduces the problem on positivity of the limiting variance
for nonconventional sums to the corresponding much more studied question
concerning sums of stationary in the wide sense processes. If $X(n),\, n\geq 0$
is a strictly stationary process then $(X^{(1)}(n),X^{(2)}(2n),...,
X^{(\ell)}(\ell n))_{n\geq 0}$ and $F(X^{(1)}(n),X^{(2)}(2n),...,
X^{(\ell)}(\ell n)),\, n\geq 0$ are strictly stationary, as well, while under
our condition (\ref{2.4}) these processes are stationary in the wide sense.
Limit theorems for sums of the latter were widely studied.
We observe that it is not possible to give useful (i.e. computable) positive
lower bounds for the limiting variance even in a general conventional situation
of sums of stationary processes. In the nonconventional case the situation is
more complicated and though some formulas for the limiting variances are
given in \cite{KV1} it is not possible to check directly when they are positive.
Still, assuming that $X(n),\, n\geq 0$ are independent we provide in
Section \ref{sec6} some formulas for limiting variances which are easier to
handle and to obtain estimates.

\begin{remark}\label{rem2.3+}
Similarly to \cite{KV1} the results of this paper can be applied to some
types of discrete time dynamical systems $T:\Om\circlearrowleft$ such as
subshifts of finite type, expanding transformations and Axiom A diffeomorphisms
considered with a Gibbs invariant measure $\mu$ (see, for instance, \cite{Bow}).
 Such dynamical systems are
exponentially fast $\psi$-mixing which is more than enough for our purposes. In
this
setup we should take $X(n)=f\circ T^n$ where, say, $f$ is a H\" older continuous
(vector) function. Then Theorem \ref{thm2.3} reduces the question on positivity
of the limiting variance of $N^{-1/2}\sum_{n=1}^NG(T^n\om,T^{2n}\om,...,
T^{\ell n}\om)$, where $G(\om_1,...,\om_\ell)=F(f(\om_1),...,f(\om_\ell))$,
 to the corresponding question for the product dynamical
system $T\times T^2\times\cdots\times T^\ell:\,\Om\times\cdots\times\Om
\circlearrowleft$, i.e.
for $N^{-1/2}\sum_{n=1}^NG(T^n\om_1,T^{2n}\om_2,...,T^{\ell n}\om_\ell)$.
Since $T\times T^2\times\cdots\times T^\ell$ preserves the product measure
$\mu\times\cdots
\times\mu$ and also turns out to be an exponentially fast $\psi$-mixing
dynamical system we arrive at a well studied problem. Furthermore, it is known
since \cite{Bro} that for a general measure preserving dynamical system
$T:\Om\circlearrowleft$ and a bounded measurable function $H$ the sums
$\sum_{n=1}^NH(T^n\om)$ are almost surely uniformly bounded if and only if
$H$ has a co-boundary representation
$H(\om)=\vf(T\om)-\vf(\om)$ for some other bounded measurable function $\vf$.
For nonconventional sums $\sum_{n=1}^NG(T^n\om,...,T^{\ell n}\om)$ such result
cannot hold true in this generality since the meaningful action here is only
on the diagonal of $\Om\times\cdots\times\Om$ and its images under
$T\times T^2\times\cdots\times T^\ell$, and so we can define $G$ to be a
co-boundary for $T\times T^2\times\cdots\times T^\ell$ on the diagonal and
its images which has zero product measure while defining $G$ arbitrarily
outside of the diagonal still preserving measurability. Then the sum will be
bounded but $G$ will not have necessarily a co-boundary representation on the
whole product space. Such simple counterexample will usually be impossible if
 we impose some regularity conditions on $G$, even just continuity.
In the more restricted
nonconventional situation of Theorems \ref{thm2.2} and \ref{thm2.3} the central
limit theorem together with positivity of the limiting variance ensures that
the sum  $\sum_{n=1}^NG(T^n\om,...,T^{\ell n}\om)$ is unbounded while if it is
bounded then $G$ must have a co-boundary representation.  It would still be
interesting to understand whether boundedness of these sums in the
nonconventional setup
can be characterized in a more general situation. In clarifying some points
discussed in this remark the second author benefited from several conversations
with A. Katok at PennState University in September 2014.
\end{remark}

 Recall, that the Kolmogorov (uniform) metric is defined
 for each pair of distribution functions $F,G$ by
\begin{equation}\label{2.18}
d_{K}(F,G)=\sup_{x\in\bbR}|F(x)-G(x)|.
\end{equation}
Now we can formulate our second main result.

\begin{theorem}\label{thm2.4}
Suppose that $k=\ell$ and that Assumption \ref{ass2.1} holds true with $b\geq4$,
 $\al,\la>1$ and that $\sig^2>0$. Then,
\[
d_{K}(\cL(\xi_{N}(1)),\cN(0,\sig^{2}))\leq CA(\sig)N^{-\zeta(\al,\la)}
\]
where, $\cN(0,\sig^2)$ is the zero mean normal distribution with the
variance $\sig^2>0$, the constant $C>0$ depends
 only on the initial parameters and the expressions
(\ref{2.8}) and (\ref{2.9}),
$A(\sig)=(1+\frac{1}{\sig})\max(\sig^{-\frac{4}{3}},\sig^{-\frac{4}{5}})$ and
\[
\zeta(\al,\la)=\frac{1}{10}\min(\min(\al,\la)-1,\frac{\la}{\la+8}).
\]
Moreover, if there exist $c\in(0,1)$ and $r>0$ satisfying $\varpi_{q,p}(n)
+\be(q,n)\leq rc^{n}$
then $N^{-\zeta(\al,\la)}$ can be replaced by  $N^{-\frac{1}{10}}\ln\, N$.
\end{theorem}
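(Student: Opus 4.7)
The plan, which is natural because $k=\ell$ makes all the $q_j(n)=jn$ arithmetic progressions, is to approximate the whole normalised sum $\xi_N(1)$ by a single martingale $S_N/\sqrt N$ with respect to an increasing filtration, and then apply a Berry-Esseen bound for martingales (e.g.\ Heyde-Brown, \cite{HH}, or Rio, \cite{Ri}) to $S_N/\sqrt{N\sig^2}$. Positivity of $\sig^2$ (Theorem \ref{thm2.3}) is needed only for normalisation and enters through the $\sig^{-a}$ factors in the martingale inequality, which together produce $A(\sig)$. The first technical step is a truncation: replace each $X(n)$ by $\tilde X(n):=E(X(n)\,|\,\cF_{n-r,n+r})$ for a radius $r=r(N)$ to be optimised; by (\ref{2.3}), (\ref{2.5}) and H\"older's inequality (using the moment $\gam_{bq\iota}$ from (\ref{2.10})), each summand $F_i(X(n),\ldots,X(in))$ differs in $L^2$ from $F_i(\tilde X(n),\ldots,\tilde X(in))$ by a power of $\be(q,r)$, and the $\la$-weighted summability (\ref{2.9}) controls the global truncation error.

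With $\cG_n:=\cF_{-\infty,\ell n+r}$, every truncated summand is $\cG_n$-measurable. Using the centering identity (\ref{2.14}) together with the mixing rate $\vp_{q,p}$ applied to the time gap between $\tilde X(in)$ (integrated against its $\mu$-marginal) and the generators of $\cG_{n-1}$, one obtains
\[
\bigl\|E\bigl[F_i(\tilde X(n),\ldots,\tilde X(in))\,\big|\,\cG_{n-1}\bigr]\bigr\|_{2}=O\bigl(\vp_{q,p}\bigl((\ell-i)n+O(r)\bigr)\bigr),
\]
and the $\al$-weighted sum (\ref{2.8}) (requiring $\al>1$) shows that the cumulative correction is small. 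Centering each truncated summand against $\cG_{n-1}$ then yields a martingale difference sequence $\{d_n\}$ whose partial sums $S_n$ approximate the numerator of $\xi_N(1)$ in $L^2$.

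The principal obstacle is to show that the conditional variance $V_N:=\sum_{n=1}^N E(d_n^2\mid\cG_{n-1})$ concentrates on $N\sig^2$ with a quantitative rate. I would expand $d_n^2$ into pairs $F_iF_j$, reduce cross terms by mixing to products of $\mu$-integrals matching the covariance formula of Theorem \ref{thm2.2}, and control the resulting errors in $L^1$. With $V_N/N\to\sig^2$ rate-controlled and uniform fourth moments on $d_n$ (from (\ref{2.10}) with $b\geq4$), a Heyde-Brown type martingale Berry-Esseen inequality applied to $S_N/\sqrt{N\sig^2}$ yields a bound of the schematic form $C\sig^{-a_1}\|V_N/N-\sig^2\|_1^{a_2}+C\sig^{-a_3}N^{-a_4}$; tracking the exponents in the two summands produces exactly the mixed $\sig^{-4/3},\sig^{-4/5}$ scaling in $A(\sig)$.

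Finally, the three competing errors -- truncation (a negative power of $r$ governed by $\la$), martingale approximation (a negative power of $r$ governed by $\al$), and the martingale Berry-Esseen rate (polynomial in $r/\sqrt N$ with exponent $1/10$ coming from the fourth-moment version of Heyde-Brown) -- are balanced by taking $r\asymp N^{1/(\la+8)}$, giving $\zeta(\al,\la)=\tfrac{1}{10}\min(\min(\al,\la)-1,\la/(\la+8))$. Under exponential mixing, both $\be(q,r)$ and $\vp_{q,p}(r)$ decay geometrically, so choosing $r=O(\log N)$ absorbs the first two errors into a logarithmic factor and leaves the martingale Berry-Esseen rate $N^{-1/10}\ln N$.
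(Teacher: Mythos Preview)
Your martingale approximation step contains a genuine error. With the filtration $\cG_n=\cF_{-\infty,\ell n+r}$, consider any $i<\ell$. For $n>\ell/(\ell-i)$ (and in particular for all but finitely many $n$) we have $in+r<\ell(n-1)+r$, so $\tilde X(in)$ --- and in fact the entire block $\tilde X(n),\ldots,\tilde X(in)$ --- is already $\cG_{n-1}$-measurable. Hence
\[
E\bigl[F_i(\tilde X(n),\ldots,\tilde X(in))\,\big|\,\cG_{n-1}\bigr]=F_i(\tilde X(n),\ldots,\tilde X(in)),
\]
not $O(\vp_{q,p}((\ell-i)n))$. The quantity $(\ell-i)n$ is a gap in the wrong direction: it places $\tilde X(in)$ \emph{behind} the filtration, not ahead of it, so mixing buys nothing and the centering identity (\ref{2.14}) never comes into play. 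Consequently the ``cumulative correction'' you subtract is the entire $F_i$-part of the sum for every $i<\ell$, and your martingale $S_N$ does not approximate $\sqrt N\,\xi_N(1)$ at all. (Even for $i=\ell$ there is a problem: the gap between $\ell n$ and $\ell(n-1)+r$ is $\ell-r$, which is negative once $r(N)>\ell$.)

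The paper's device, which you are missing, is to index the martingale by the \emph{fine} time $m$ running from $1$ to $\ell N$ (not by the summand index $n$), with filtration $\cF_{-\infty,m+2^{u(N)}}$. One sets $Y_{i,m}=F_i(X(m/i),X(2m/i),\ldots,X(m))$ when $i\mid m$ and $Y_{i,m}=0$ otherwise, and then $W^{(N)}_m=\sum_{i=1}^\ell\bbI_{\{m\le iN\}}W_{i,m,2^{u(N)}}$ with a standard Gordin correction $W_{i,m,2^u}=Y_{i,m,2^u}+R_{i,m,u}-R_{i,m-1,u}$. The point is that for every $i$ the \emph{last} argument of $Y_{i,m}$ is $X(m)$, which sits exactly at the frontier of the filtration at step $m$; this is what lets (\ref{2.14}) and the mixing coefficient $\vp_{q,p}(s)$ control $E[Y_{i,m+s,2^u}\,|\,\cF_{-\infty,m+2^u}]$ uniformly in $i$ and yields the bound (\ref{5.3}). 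Once this synchronisation is in place, the rest of your outline --- quadratic variation estimate via decomposition into $F_iF_j$ pairs, fourth moments from $b\ge4$, Hall--Heyde with $\del=1$ producing the $\sig^{-4/3}$ and $\sig^{-4/5}$ factors, and the choice $2^{u(N)}\asymp N^{1/(\la+8)}$ --- matches the paper's argument.
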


In order to describe our method of the proof of Theorem \ref{thm2.4} consider
the simpler case when $X(n),\, n\geq 0$ is a sequence of independent
identically distributed (i.i.d.) random
variables and choose the $\sig$-algebras $\cF_{n,m}=\sig\{ X(n),...,X(m)\}$
for any $n\leq m$. For each $i=1,2,...,\ell$ define
\[
M_{i,n}=\sum_{im\leq n}F_i(X(m),X(2m),...,X(im))\quad\mbox{for}\quad
n\leq iN
\]
and $M_{i,n}=M_{i,iN}$ for $n\geq iN$. Then $M_{i,n},\, n=1,...,\ell N$ is
a martingale with respect to the filtration $\{\cF_{0,n},\, n\geq 0\}$, and
so $M_n=\sum_{i=1}^\ell M_{i,n},\, n=1,2,...,\ell N$ is also a martingale or,
more precisely, a martingale array since the construction depends on $N$.
Now observe that $\xi_N(1)= N^{-1/2}M_{\ell N}$
and Theorem \ref{thm2.4} will follow in this situation from estimates
of rates of convergence in the martingale central
limit theorem derived in \cite{HH}. Still, for this specific i.i.d. case we
will give in
Section \ref{sec6} another more direct proof which yields better estimates.
In the more general setup of the present paper we will need
first a truncation procedure and then a martingale approximation similar
but still somewhat different from \cite{KV1}. Namely, as above in the
i.i.d. case, we construct in the case $k=\ell$ a martingale approximation
 of the whole sum
$\sqrt N\xi_N(1)$ and not only of its parts $\sqrt N\xi_{i,N}(t)$ as in
\cite{KV1}. Some additional work, described in Section \ref{sec6}, is
 needed when $\xi_N$ has the more general form (\ref{1.2}) with some of
$q_j(n)$'s growing faster than linearly. In order to rely on \cite{HH}
we will need also appropriate quadratic variation estimates which will be
obtained in Section \ref{sec5}.

\begin{remark}\label{rem2.4+}
We construct a martingale array approximation (representation in the i.i.d.
case described above) for the whole normalized sum $\xi_N(1)$ and not only
for its parts $\xi_{i,N}$ as in \cite{KV1}. This serves us well for the
Berry-Esseen type estimates here and yields also the central limit theorem
for $\xi_N(1)$ from standard results for martingale arrays. Still,
the functional central limit theorem for the whole process $\xi_N(t),\,
t\geq 0$ cannot be obtained this way. Indeed, if we could approximate this
process by a martingale array depending only on $N$ but not on $t$ then the
limiting Gaussian process would have independent increments which is not the
case in general (see \cite{KV1}). Already in the above construction for the
i.i.d. case we would have to define $M_{i,n}=M_{i,[iNt]}$ for $n\geq [iNt]$
obtaining martingales depending on $t$ which would not enable us to employ
standard theorems on martingale arrays.
\end{remark}

\section{Auxiliary estimates}\label{sec3}
\setcounter{equation}{0}

We start with the following simple observation.
\begin{lemma}\label{lem3.1}
Let $f:(\bbR^{\nu})^{d}\to\bbR$ and $g:(\bbR^{\nu})^{p}\to\bbR$
satisfy the conditions (\ref{2.5}) and (\ref{2.6}). Then the function
$h:(\bbR^{\nu})^{d+p}\to\bbR$ defined by $h(x,y)=f(x)g(y)$
satisfies these conditions with constants $2\iota,\ka$ and $\tilde{K}
=2(1+d+p)K^{2}$ in place of $\iota,\ka$ and $K$, respectively.
\end{lemma}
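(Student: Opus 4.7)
The conclusion of the lemma comprises three claims: the polynomial growth bound and the H\"older-type bound from (\ref{2.5}) for $h$, together with the centering condition (\ref{2.6}). The last is immediate from Fubini, since
\[
\int h\,d\mu^{d+p}=\Bigl(\int f\,d\mu^{d}\Bigr)\Bigl(\int g\,d\mu^{p}\Bigr)=0,
\]
so I would devote essentially all of the work to the two analytic bounds.

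For the polynomial growth bound on $h$, start from the trivial inequality $|h(x,y)|\leq|f(x)|\,|g(y)|$ and apply the second half of (\ref{2.5}) separately to $f$ and $g$. This gives
\[
|h(x,y)|\leq K^{2}\Bigl(1+\sum_{i=1}^{d}|x_i|^{\iota}\Bigr)\Bigl(1+\sum_{j=1}^{p}|y_j|^{\iota}\Bigr).
\]
The key step is to linearise this product to exponent $2\iota$. Expanding produces $1$, two linear sums, and a cross sum $\sum_{i,j}|x_i|^{\iota}|y_j|^{\iota}$. Bound each single term by $|x_i|^{\iota}\leq\tfrac12(1+|x_i|^{2\iota})$ and each cross term via AM--GM by $|x_i|^{\iota}|y_j|^{\iota}\leq\tfrac12(|x_i|^{2\iota}+|y_j|^{2\iota})$. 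Collecting everything with coordinates $z=(x,y)\in(\bbR^{\nu})^{d+p}$ yields a bound of the form $c_{d,p}K^{2}\bigl(1+\sum_{k=1}^{d+p}|z_k|^{2\iota}\bigr)$ with a coefficient $c_{d,p}\leq 2(1+d+p)$, which is (\ref{2.5}) for $h$ with the announced constants.

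For the H\"older-type bound I would use the product-rule split
\[
h(x,y)-h(x',y')=f(x)\bigl(g(y)-g(y')\bigr)+g(y')\bigl(f(x)-f(x')\bigr),
\]
apply the growth half of (\ref{2.5}) to $f(x)$ and $g(y')$ and the H\"older half to the two differences, and then convert all resulting polynomial factors (now in at most $2(d+p)$ variables, mixing primed and unprimed arguments) to exponent $2\iota$ by exactly the same AM--GM linearisation as above. The sums $\sum_{j}|y_j-y_j'|^{\ka}$ and $\sum_{i}|x_i-x_i'|^{\ka}$ are trivially absorbed into $\sum_{k=1}^{d+p}|z_k-z_k'|^{\ka}$; in particular the H\"older exponent $\ka$ on the differences is preserved.

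The only real obstacle is bookkeeping: one must verify that, after expanding the products $(1+A)(1+B)$ and linearising via AM--GM, the total numerical coefficient fits inside $2(1+d+p)K^{2}$. The bound is loose enough that a somewhat generous count suffices, and no delicate inequality beyond Young's $ab\leq\tfrac12(a^{2}+b^{2})$ is needed.
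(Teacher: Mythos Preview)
Your proposal is correct and follows essentially the same route as the paper's own proof: the paper simply records that the result follows from the three elementary inequalities $|ab|\leq\tfrac12(a^2+b^2)$, $|a|\leq 1+a^2$, and $|ab-a'b'|\leq|a(b-b')|+|b'(a-a')|$, together with the H\"older condition on $f$ and $g$. Your product-rule split and AM--GM linearisation to exponent $2\iota$ are exactly these ingredients spelled out, and the paper's additional mention of concavity of $x\mapsto x^a$ is not needed in your bookkeeping.
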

\begin{proof}
The lemma follows from three simple inequalities $|ab|\leq\frac{1}{2}(a^{2}
+b^{2})$, $|a|\leq1+a^{2}$, $|ab-a'b'|\leq|a(b-b')|+|b'(a-a')|$,
the H\" older continuity of $f$ and $g$ and the concavity of the function
$x\to x^{a}$ for $1>a>0$.
\end{proof}

Next, we will need
\begin{lemma}\label{lem3.2}
Let $0<\del<\ka\leq1$ and $b\geq1$ satisfy $\frac{1}{b}\geq\frac{1}{p}
+\frac{\iota+2}{m}+\frac{\del}{q}$
for some $q,p\geq1$ and $m,\iota>0$. Then for any random variables
$Y,X$,
\[
||Y^{\iota}\cdot X^{\ka}||_{b}\leq(1+||X||_{m}^{\ka})||Y||_{m}^{\iota}
\cdot||X||_{q}^{\del}.
\]
\end{lemma}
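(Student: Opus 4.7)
The plan is to split $|X|^\ka$ pointwise into a ``small values'' piece and a ``large values'' piece and then apply H\"older's inequality to each resulting summand. The key elementary observation I would use is that for every $t\geq 0$ one has $t^{\ka-\del}\leq 1+t^\ka$ (verified by separating the cases $t\leq 1$ and $t>1$, using only $0<\del<\ka\leq 1$); multiplying through by $t^\del$ gives the pointwise bound $|X|^\ka\leq |X|^\del+|X|^{\ka+\del}$. Hence
\[
||Y^\iota X^\ka||_{b}\leq ||Y^\iota X^\del||_{b}+||Y^\iota X^\ka X^\del||_{b},
\]
and the problem reduces to estimating each of these two terms separately.

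For the first term I would apply H\"older's inequality with the two exponents $m/\iota$ and $q/\del$, which on a probability space requires $\frac{1}{b}\geq\frac{\iota}{m}+\frac{\del}{q}$. This is weaker than the hypothesis $\frac{1}{b}\geq\frac{1}{p}+\frac{\iota+2}{m}+\frac{\del}{q}$ (just drop the nonnegative $\frac{1}{p}+\frac{2}{m}$), and yields $||Y^\iota X^\del||_b\leq ||Y||_m^\iota\,||X||_q^\del$.

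For the second term I would apply three-factor H\"older to $|Y|^\iota\cdot|X|^\ka\cdot|X|^\del$ with exponents $m/\iota$, $m/\ka$ and $q/\del$. The required relation $\frac{1}{b}\geq\frac{\iota}{m}+\frac{\ka}{m}+\frac{\del}{q}$ again follows from the hypothesis, since $\ka\leq 1\leq 2$, and produces $||Y^\iota X^\ka X^\del||_b\leq ||Y||_m^\iota\,||X||_m^\ka\,||X||_q^\del$. Summing the two estimates gives precisely $(1+||X||_m^\ka)||Y||_m^\iota\,||X||_q^\del$, as claimed. I do not expect a serious obstacle here: the whole argument is a routine H\"older manipulation, and the only mildly non-obvious step is the pointwise decomposition of $|X|^\ka$, which is what decouples the ``offending'' exponent $\ka$ on the left from the target exponent $\del$ on the right and generates the ``$1$'' and the ``$||X||_m^\ka$'' contributions on the right-hand side.
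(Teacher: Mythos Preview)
Your proof is correct. The pointwise bound $t^{\ka}\le t^{\del}+t^{\ka+\del}$ (equivalently $t^{\ka-\del}\le 1+t^{\ka}$) is valid exactly as you argue, and the two subsequent H\"older applications go through: the exponent conditions $\frac{1}{b}\ge\frac{\iota}{m}+\frac{\del}{q}$ and $\frac{1}{b}\ge\frac{\iota+\ka}{m}+\frac{\del}{q}$ follow from the hypothesis by dropping $\frac{1}{p}$ and using $\ka\le 1<2$, and the individual exponents $m/\iota$, $m/\ka$, $q/\del$ are all at least $1$ because the hypothesis together with $b\ge 1$ forces $m\ge\iota+2$.

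Your route differs from the paper's. The paper performs a hard truncation, writing $\|Y^{\iota}X^{\ka}\|_{b}\le T_1+T_2$ with $T_1=\|Y^{\iota}X^{\ka}\bbI_{\{|X|>1\}}\|_{b}$ and $T_2=\|Y^{\iota}X^{\ka}\bbI_{\{|X|\le 1\}}\|_{b}$. For $T_1$ it writes the indicator as $(\bbI_{\{|X|>1\}})^{\del}$, applies H\"older to split off $\|\bbI_{\{|X|>1\}}\|_{q}^{\del}$, and then bounds this by $\|X\|_{q}^{\del}$ via Markov's inequality; a second H\"older gives the factor $\|Y\|_{m}^{\iota}\|X\|_{m}^{\ka}$. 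For $T_2$ it uses $|X|^{\ka}\le |X|^{\del}$ on $\{|X|\le 1\}$ and then a single H\"older. Your algebraic inequality bypasses both the indicator machinery and the Markov step, producing the two terms $\|Y\|_{m}^{\iota}\|X\|_{q}^{\del}$ and $\|Y\|_{m}^{\iota}\|X\|_{m}^{\ka}\|X\|_{q}^{\del}$ directly. The two arguments are morally the same ``small/large $|X|$'' idea, but yours is a bit more streamlined; the paper's version, on the other hand, makes the origin of each factor slightly more transparent.
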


\begin{proof}
First, clearly,
\begin{eqnarray}\label{3.1}
&\| Y^{\iota}X^{\ka}\|_{b}\leq T_1+T_2\,\,\mbox{where}\\
& T_1=\| Y^{\iota}X^{\ka}\bbI_{\{|X|>1\}}\|_{b}\,\,\mbox{and}\,\,
T_2=\| Y^{\iota}X^{\ka}\bbI_{\{|X|\leq1\}}\|_{b}.\nonumber
\end{eqnarray}
Observe that $T_{1}=||Y^{\iota}X^{\ka}\cdot(\bbI_{\{|X|>1\}})^{\del}||_{b}$.
Since $\frac{1}{b}>\frac{\iota+\ka}{m}+\frac{\del}{q}$ then Lemma 3.1 from
 \cite{KV1} yields that
\[
T_{1}\leq||Y^{\iota}X^{\ka}||_{\frac{m}{\iota+\ka}}\cdot||\bbI_{\{|X|>1\}}
||_{q}^{\del}.
\]
Since $||\bbI_{\{|X|>1\}}||_{q}=\left(P\{|X|>1\}\right)^{\frac{1}{q}}=
\left(P\{|X|^{q}>1\}\right)^{\frac{1}{q}}$
it follows by the Markov inequality that $||\bbI_{\{|X|>1\}}||_{q}\leq
\left(E|X|^{q}\right)^{\frac{1}{q}}=||X||_{q}$.
Moreover, since $(\frac{m}{\iota+\ka})^{-1}=\frac{\iota}{m}+\frac{\ka}{m}$,
 Lemma 3.1 from \cite{KV1}  yields $||Y^{\iota}X^{\ka}||_{\frac{m}
{\iota+\ka}}\leq||Y||_{m}^{\iota}||X||_{m}^{\ka}$,
and so $T_{1}\leq||Y||_{m}^{\iota}||X||_{m}^{\ka}||X||_{q}^{\del}.$
Next, set $Z=|X|\bbI_{\{|X|\leq1\}}$. Clearly, $T_{2}=||Y^{\iota}
Z^{\ka}||_{b}$.
Since $0\leq Z\leq1$ and $\del<\ka$ it follows that $T_{2}\leq||Y^{\iota}
Z^{\del}||_{b}$.
Since $\frac{1}{b}>\frac{\iota}{m}+\frac{\del}{q}$ we apply again Lemma
3.1 from \cite{KV1} and use that $||Z||_{q}\leq||X||_{q}$
in order to obtain $T_{2}\leq||Y||_{m}^{\iota}||X||_{q}^{\del}$.
The lemma now follows from (\ref{3.1}) and the above estimates.
\end{proof}

We will use also
 \begin{lemma}\label{lem3.3}
Let $X,Y$ and $Z$ be random variables and $\del>0.$ Suppose that
$X$ and $Y$ are defined on a common probability space and $Z$ has
density bounded by $c>0$. Then, for any $a\geq1$,
\[
d_{K}(Y,Z)\leq3d_{K}(X,Z)+||X-Y||_{a}^{\frac{a}{1+a}}(1+4c).
\]
\end{lemma}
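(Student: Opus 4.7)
The plan is to use the standard three-ingredient argument: a thickening estimate of the form $F_Y(x)\leq F_X(x+\epsilon)+P(|X-Y|>\epsilon)$, the Lipschitz property of $F_Z$ (which has slope at most $c$ because its density is bounded by $c$), and Markov's inequality to convert the tail probability $P(|X-Y|>\epsilon)$ into an $L^a$-norm bound. The whole thing then collapses to a one-parameter optimization in $\epsilon$.

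Concretely, I would first fix $x\in\bbR$ and $\epsilon>0$ and observe that $\{Y\leq x\}\subseteq\{X\leq x+\epsilon\}\cup\{|X-Y|>\epsilon\}$, so that
\[
F_Y(x)-F_Z(x)\leq F_X(x+\epsilon)-F_Z(x)+P(|X-Y|>\epsilon).
\]
Splitting $F_X(x+\epsilon)-F_Z(x)=[F_X(x+\epsilon)-F_Z(x+\epsilon)]+[F_Z(x+\epsilon)-F_Z(x)]$ and using the bounded-density hypothesis gives $F_X(x+\epsilon)-F_Z(x)\leq d_K(X,Z)+c\epsilon$. The reverse inequality is completely symmetric with $x-\epsilon$ in place of $x+\epsilon$. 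Combining, and then bounding $P(|X-Y|>\epsilon)\leq \|X-Y\|_a^a\epsilon^{-a}$ by Markov, yields
\[
|F_Y(x)-F_Z(x)|\leq d_K(X,Z)+c\epsilon+\|X-Y\|_a^a\epsilon^{-a}
\]
uniformly in $x$.

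The remaining step is to choose $\epsilon$. The natural choice that balances the last two terms is $\epsilon=\|X-Y\|_a^{a/(a+1)}$, giving $c\epsilon+\|X-Y\|_a^a\epsilon^{-a}=(c+1)\|X-Y\|_a^{a/(a+1)}$. Since $c+1\leq 1+4c$ for $c\geq 0$ and the coefficient $1$ in front of $d_K(X,Z)$ is at most $3$, the stated inequality follows. (The looser form with constants $3$ and $1+4c$ given in the lemma can be obtained in any case by first using the triangle inequality $d_K(Y,Z)\leq d_K(Y,X)+d_K(X,Z)$ and then estimating $d_K(Y,X)$ by the same two-sided argument, where the interval bound $P(X\in[x-\epsilon,x+\epsilon])\leq 2d_K(X,Z)+2c\epsilon$ introduces the extra factors of $2$.)

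There is essentially no technical obstacle here; the only care needed is ensuring that the split $\{Y\leq x\}\subseteq\{X\leq x+\epsilon\}\cup\{|X-Y|>\epsilon\}$ is applied in both directions (upper and lower bound for $F_Y(x)-F_Z(x)$) so that the inequality becomes two-sided, and that $\epsilon$ is chosen as a single expression independent of $x$ before taking the supremum over $x$ to produce $d_K(Y,Z)$.
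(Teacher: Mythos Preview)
Your argument is correct and is essentially the same standard thickening-plus-Markov device the paper uses, but your execution is a bit more direct. The paper first passes through $|F_X(t)-F_Y(t)|$ via the triangle inequality, bounds that by $P(|X-t|\leq\delta)+P(|X-Y|>\delta)$, and then estimates $P(|X-t|\leq\delta)$ by transferring to $Z$ twice (picking up $2d_K(X,Z)$) and using the density bound on an interval of length $4\delta$ (giving $4c\delta$); this is exactly what produces the constants $3$ and $4c$. Your route---compare $F_Y(x)$ to $F_Z(x)$ directly through $F_X(x\pm\epsilon)$ and $F_Z(x\pm\epsilon)$---avoids that detour and yields the sharper bound $d_K(X,Z)+(1+c)\|X-Y\|_a^{a/(a+1)}$, from which the lemma's stated constants follow a fortiori. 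Both arguments finish with the same optimization $\epsilon=\|X-Y\|_a^{a/(a+1)}$.
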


\begin{proof}
Let $a,t\in\bbR$ and $\del>0$. Then,
\begin{eqnarray}
&|P\{Y\leq t\}-P\{Z\leq t\}|\leq d_{K}(X,Z)+|P\{X\leq t\}-P\{Y\leq t\}|\\
&\leq d_{K}(X,Z)+P\{|X-t|\leq\del\}+P\{|X-Y|>\del\}.\nonumber
\end{eqnarray}
By the definition of $d_{k}(X,Z)$ and the mean value theorem,
\begin{eqnarray*}
&P(|X-t|\leq\del)\leq P(t-2\del<X-t\leq t+2\del)=P(X\leq t+2\del)-P(X\leq
 t-2\del)\\
 &\leq 2d_{k}(X,Z)+P(Z\leq t+2\del)-P(Z\leq t-2\del)\leq2d_{k}(X,Z)+4c\del.
\end{eqnarray*}
 Therefore by the Markov inequality,
\begin{equation*}
|P\{Y\leq t\}-P\{Z\leq t\}|\leq 3d_{k}(X,Z)+4c\del+\frac{E|X-Y|^{a}}{\del^{a}}.
\end{equation*}
 The lemma follows first by taking supremum over $t\in\bbR$
and then taking $\del=||X-Y||_{a}^{\frac{a}{a+1}}.$
\end{proof}

Next, we introduce notations which appeared in  \cite{KV1} and will
be useful here, as well. Set
\begin{eqnarray}\label{3.2}
&F_{i,n,r}(x_{1},...,x_{i-1},\omega)=E(F_{i}(x_{1},...,x_{i-1},X(n))|
\cF_{n-r,n+r}),\\
&Y_{i,q_{i}(n)}=F_{i}\left(X(q_{1}(n)),...,X(q_{i}(n))\right)\nonumber\\
&\mbox{ and}\,\, Y_{i,m}=0\,\,\mbox{ if }\,\,  m\notin\{q_{i}(n)\}_{n=1}^{
\infty},\, X_{r}(n)=E(X(n)|\cF_{n-r,n+r}),\nonumber\\
 &Y_{i,q_{i}(n),r}=F_{i,q_{i}(n),r}\left(X_{r}(q_{1}(n)),...,X_{r}(q_{i-1}(n)),
\omega\right)\,\,\mbox{and}\,\,Y_{i,m,r}=0\,\,\mbox{if}\,\,m\notin\{q_{i}(n)
\}_{n=1}^{\infty}.\nonumber
\end{eqnarray}

 We will rely on the following result obtained in Lemma 4.2 of \cite{KV1}
under Assumption \ref{ass2.1} with $b=2$. Set
\[
b_{i,j}(n,l)=E(Y_{i,q_{i}(n)}Y_{j,q_{j}(n)})
\]
and
\[
\hat{s}_{i,j}(n,l)=\min(q_{i}(n)-q_{j}(l),n) \mbox{ and } s_{i,j}(n,l)=
\max(\hat{s}_{i,j}(n,l),\hat{s}_{j,i}(l,n)).
\]
Then, there exits a nonincreasing sequence $h(m)$, satisfying
\begin{equation}\label{4.1}
\sup_{n,l:  s_{i,j}(n,l)\geq m}|b_{i,j}(n,l)|\leq h(m).
\end{equation}
Moreover, for $m>L_{1}$ we can set $h(m)=C(\varpi_{q,p}(n)+\be(q,n)^{\del})$
with $n=[\frac 13m]$ and some positive constants $L_{1}$ and $C$ depending
 only on the initial parameters.

Next, we will obtain estimates of errors for approximating expectations of
the form $EG(X(n_{1}),...X(n_{s}))$, where $n_{1}<...<n_{s}$, by
 corresponding expectations with respect to corresponding product measures.
The result is similar to Lemma 4.3 from \cite{KV1} but the latter does not
provide specific estimates which we need here. First, we will recall the
inequality (3.14) from Corollary 3.6 of \cite{KV1}. Let $\cG$
and $\cH$ be sub-$\sig$ algebras of a probability
space $(\Om,\cF,P)$, $X$ be $d$-dimensional random vector
and $f=f(x,\omega)$, $x\in\bbR^{d}$ be a collection of random
variables that are measurable with respect to $\cH$
which satisfy
\begin{equation}\label{4.2}
||f(x,\omega)-f(y,\omega)||_{q}\leq C_{1}(1+|x|^{\iota}+|y|^{\iota})
|x-y|^{\ka}\mbox{ and }
||f(x,\omega)||_{q}\le C_{2}(1+|x|^{\iota}).
\end{equation}
Then, assuming that $\frac{1}{a}\geq\frac{1}{p}+\frac{\iota+2}{m}+
\frac{\del}{q}$
and $1\geq\ka>\te>\frac{d}{p}$,
\begin{eqnarray}\label{4.3}
&\| E\big(f(X,\cdot)|\cG\big)-g(X)\|_{a}\leq c\varpi_{q,p}(\mathcal{G},\cH)
(C_{1}+C_{2})^{\frac{d}{p\te}}C_{2}^{1-\frac{d}{p\te}}(1+||X||_{m}^{\iota+1})
\\
&+2c(C_{1}+C_{2})(1+2||X||_{m}^{\iota+2})||X-E\big(X|\mathcal
{G}\big)||_{q}^{\del}
\nonumber
\end{eqnarray}
where $c=c(\iota,\ka,\te,p,q,a,\del,d)>0$ depends only on
parameters in brackets and $g(x)=Ef(x,\omega)$. Assuming
that $a\geq1$, taking expectation and using the H\" older
inequality together with (\ref{4.3}) we obtain
\begin{equation}\label{4.4}
|Ef(X,\cdot)-Eg(X)|\leq R
\end{equation}
where $R$ is the right hand side of (\ref{4.3}). As a conclusion of
(\ref{4.4}) we derive the following result.

\begin{lemma}\label{lem4.1}
Suppose that Assumption 2.1 holds true. Let  $G:(\bbR^{\wp})^{n_{1}+...+n_{v}}
\to\bbR$ be a function satisfying conditions (\ref{2.5})
with $K',\ka$ and $\iota'\leq b\iota$. Suppose that the sets
$M_{i}=\{a_{i,1}<...<a_{i,n_{i}}\}\subset\bbN$ satisfy
$a_{i,n_{i}}<a_{i+1,1}$ and set $X(M_{i})=(X(a_{i,1}),...,X(a_{i,n_{i}}))$
where $i=1,...,v$ and $r=\underset{1\leq i\leq v-1}{\min}\{a_{i+1,1}
-a_{i,n_{i}}\}$.
Let $\{Y(M_{i})\}_{i=1}^{v}$ be independent copies of $\{X(M_{i})\}_{i=1}^{v}$.
Assume that $1\geq\frac{1}{p}+\frac{\iota'+2}{m}+\frac{\del}{q}$. Then
\begin{equation}\label{4.5}
|EG(X(M_{1}),...,X(M_{v}))-EG(Y(M_{1}),...,Y(M_{v}))|\leq C((\be(q,[\frac{r}
{4}]))^{\del}+\varpi_{q,p}([\frac{r}{4}]))
\end{equation}
where $C$ depends only on the initial parameters and on $\max_{i}\{n_{i}\}$,
 $v$
and $K'$.
\end{lemma}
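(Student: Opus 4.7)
The strategy is to proceed by induction on $v$ and, at each step, to ``peel off'' the last block $M_v$ from the joint expectation using the conditional-decoupling estimate (\ref{4.3})--(\ref{4.4}). The base case $v=1$ is immediate since $X(M_1)\sim Y(M_1)$ by construction, so both sides equal zero.

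For the inductive step, let $Z_i=X(M_i)$, $W_i=Y(M_i)$, and set
$$\tilde G(z_1,\ldots,z_{v-1})=EG(z_1,\ldots,z_{v-1},W_v).$$
Since the $W_i$'s are mutually independent one has $EG(W_1,\ldots,W_v)=E\tilde G(W_1,\ldots,W_{v-1})$, and I split the quantity to be estimated as $A+B$, where
$$A=EG(Z_1,\ldots,Z_v)-E\tilde G(Z_1,\ldots,Z_{v-1}),\qquad B=E\tilde G(Z_1,\ldots,Z_{v-1})-E\tilde G(W_1,\ldots,W_{v-1}).$$
A short computation based on Jensen's inequality and the moment bound $\gam_{\iota'}\le\gam_m<\infty$ (which follows from (\ref{2.10}) and $\iota'\le b\iota$) shows that $\tilde G$ satisfies (\ref{2.5}) with the same $\ka$ and $\iota'$ but a new constant $K''$ depending only on $K'$, $n_v$, and $\gam_m$; then $B$ is handled by the inductive hypothesis applied to $\tilde G$ on the first $v-1$ blocks.

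To estimate $A$, set $s=[r/4]$ and let $\tilde Z_v=(X_s(a_{v,1}),\ldots,X_s(a_{v,n_v}))$, where $X_s(n)=E(X(n)\mid\cF_{n-s,n+s})$. Using (\ref{2.5}), Lemma \ref{lem3.2}, and the bound $\|X(n)-X_s(n)\|_q\le\be(q,s)$, I first replace $Z_v$ by $\tilde Z_v$ inside $G$; this costs an error of order $\be(q,[r/4])^\del$, and the analogous replacement for $W_v$ inside $\tilde G$ costs the same. What remains is to bound
$$\bigl|EG(Z_1,\ldots,Z_{v-1},\tilde Z_v)-Eg(Z_1,\ldots,Z_{v-1})\bigr|,\qquad g(x)=EG(x,\tilde Z_v),$$
via (\ref{4.4}) applied with $f(x,\om)=G(x,\tilde Z_v(\om))$, the role of ``$X$'' in (\ref{4.3}) played by $(Z_1,\ldots,Z_{v-1})$, $\cH=\cF_{a_{v,1}-s,\infty}$, and $\cG=\cF_{-\infty,a_{v-1,n_{v-1}}+s}$. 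By construction $f$ is $\cH$-measurable, $\varpi_{q,p}(\cG,\cH)\le\varpi_{q,p}(r-2s)\le\varpi_{q,p}([r/4])$, and $\|(Z_1,\ldots,Z_{v-1})-E((Z_1,\ldots,Z_{v-1})\mid\cG)\|_q$ is controlled by $\be(q,s)$ (up to a combinatorial constant) because each $X(a_{i,j})$ with $i\le v-1$ admits the $\cG$-measurable approximation $X_s(a_{i,j})$. Verifying (\ref{4.2}) for $f$ is routine: it reduces to (\ref{2.5}) together with $\gam_{bq\iota}<\infty$ to control $\|X_s(a_{v,j})\|_{q\iota'}$.

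The main obstacle is bookkeeping the constants so that the final $C$ depends only on the initial parameters, $\max_in_i$, $v$, and $K'$ — in particular, checking that $\tilde G$ does not degrade the regularity (\ref{2.5}) under iteration, and that (\ref{4.3}) can be invoked at each step with exponent $a=1$, which requires exactly the standing hypothesis $1\ge \tfrac1p+\tfrac{\iota'+2}{m}+\tfrac\del q$ of the lemma together with some auxiliary $\te$ satisfying $\ka>\te>d/p$ where $d=\wp(n_1+\cdots+n_{v-1})$ (this last point forces the $\max_in_i$ and $v$ dependence of $C$). Summing the errors over the $v-1$ decoupling moves then yields the claimed estimate.
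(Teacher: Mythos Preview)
Your proposal is correct and is essentially the same argument as the paper's. The paper writes the difference as a telescoping sum
\[
EG(\hat X_v)-EG(\hat Y_v)=\sum_{j=1}^{v}\bigl(EH^{(j)}(\hat X_j)-EH^{(j-1)}(\hat X_{j-1})\bigr),
\]
where $H^{(j-1)}(z_1,\ldots,z_{j-1})=EH^{(j)}(z_1,\ldots,z_{j-1},X(M_j))$, which is exactly your induction on $v$ with $\tilde G=H^{(v-1)}$; each summand is then bounded by the same two moves you describe --- replacing $X(M_j)$ by $X_{[r/4]}(M_j)$ via Lemma~\ref{lem3.2} (the $\be(q,[r/4])^\del$ contribution) and decoupling via (\ref{4.4}) with $\cG=\cF_{-\infty,a_{j-1,n_{j-1}}+[r/4]}$ (the $\varpi_{q,p}([r/4])$ contribution).
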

\begin{proof}
For $i=1,...,v$ set  $z_{i}=(x_{a_{i,1}},...,x_{a_{i,n_{i}}})$,
\[
\hat{X}_{i}=(X(M_{1}),...,X(M_{i})) \mbox{ and } H^{(v)}(z_{1},...,z_{v})=
G(z_{1},...,z_{v}).
\]
Define recursively for $j=v,v-1,...,1$,
\[
H^{(j-1)}(z_{1},...,z_{j-1})=\int H^{(j)}(z_{1},...,z_{j})d\nu_{j}(z_{j})=
EH^{(j)}(z_{1},...,z_{j-1},X(M_{j})).
\]
Notice that $H^{(0)}=EG(\hat{Y}_{v})$.  For any $s>0$, set
\[
H_{s}^{(j-1)}(z_{1},...,z_{j-1})=EH^{(j)}(z_{1},...,z_{j-1},X_{[\frac{s}{4}]}
(M_{j})).
\]
Observe that since $m>b\iota$, $X(M_{j})$ has a finite $\iota'$
moment. Hence, $H^{(j)}$and $H_{r}^{(j)}$ also satisfy conditions
(\ref{2.4})-(\ref{2.5}). Thus, by the contraction of conditional
expectations and Lemma \ref{lem3.2} we obtain,
\[
|EH^{(j)}(\hat{X}_{j})-EH^{(j)}(\hat{X}_{j-1},X_{[\frac{r}{4}]}(M_{j}))|
\leq C(\be(q,[\frac{r}{4}]))^{\del}
\]
and
\[
|EH^{(j-1)}(\hat{X}_{j-1})-EH_{r}^{(j-1)}(\hat{X}_{j-1})|\leq
C(\be(q,[\frac{r}{4}]))^{\del}.
\]
Let $1\leq j\leq v$ and set $f(y,\om)=H^{(j)}(y,X_{[\frac{r}{4}]}(M_{j}))$.
Observe that condition (\ref{4.2}) is satisfied with constants
which depend only on the initial parameters since $X(M_{j})$ has
a finite $bq\iota$ moment and $q\iota'\leq bq\iota$. Taking $\cG=
\cF_{-\infty,a_{j-1,n_{j-1}}+[\frac{r}{4}]}$
and applying (\ref{4.4}) we obtain that
\[
|EH^{(j)}(\hat{X}_{j-1},X_{[\frac{r}{4}]}(M_{j}))-EH_{r}^{(j-1)}
(\hat{X}_{j-1})|\leq C'(\varpi_{q,p}([\frac{r}{2}])+\be(q,[\frac{r}{4}]))^{
\del})
\]
and therefore,
\[
|EH^{(j)}(\hat{X}_{j})-EH^{(j-1)}(\hat{X}_{j-1})|\leq C''[\varpi_{q,p}
([\frac{r}{2}])+\be(q,[\frac{r}{4}]))^{\del}].
\]
Finally, using the fact that
\[
H^{(v)}(\hat{X}_{v})-H^{(0)}=\sum_{j=1}^{v}H^{(j)}(\hat{X}_{j})-H^{(j-1)}
(\hat{X}_{j-1})
\]
 we obtain (\ref{4.5}) completing the proof.
\end{proof}

We will need the following general estimates which appeared as Lemmas 6.1
and 6.2 in earlier
preprint versions of \cite{KV1} (see arXiv:1012.2223v2) but not in its
published version so for readers' convenience we provide them here.
Consider a probability space $(\Omega, \cF, P)$ with a filtration of
$\sigma-$fields $\cG_j$. Suppose that random variables $X_j$ are $\cG_j$
measurable  and for some $2\leq p<\infty$ satisfy
\begin{equation}\label{eq1.2}
\gam_p=\sup_j \|X_j\|_p\leq
\sup_i \sum_{j\ge i} \|E[X_j|\cG_i]\|_p=A_p<\infty.
\end{equation}
We will explore the behavior of  higher  order moments for sums
$S_n=\sum_{i=1}^n X_i$ obtaining estimates of the form
$E[S_n^{2l}]\le C_{2l} n^l$
with some control on dependence of constants  $C_{2l}$ on  $\gam_{2l}$
and $A_{2l}$.
\begin{lemma}\label{Lem6.1}
Suppose $\{a_n\}$ is a sequence of nonnegative numbers such that for some
integer $l\geq 1$ and any integer $n\ge 1$,
$$
a_{n+1}\le  c\sum_{j=1}^n \sum_{r=2}^{2l} C^r a_j^\frac{2l-r}{2l}.
$$
Then
$$
a_n\le A\,n^l
$$
with $A=\max\{ 2^l c^l C^{2l}, C^{2l}, a_1\}$.
\end{lemma}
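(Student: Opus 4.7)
My plan is to prove $a_n\le An^l$ by induction on $n$, with the base case $n=1$ being immediate from $A\ge a_1$. For the inductive step I would substitute $a_j\le Aj^l$ into the recursion and then reparametrize by $t=CA^{-1/(2l)}$; since $C^r A^{(2l-r)/(2l)}=At^r$, the recursion takes the convenient form
\begin{equation*}
a_{n+1}\le cA\sum_{j=1}^n\sum_{r=2}^{2l} t^r\,j^{(2l-r)/2}.
\end{equation*}

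The key step is to dominate the resulting double sum by a constant multiple of $t^2(n+1)^l$. The hypothesis $A\ge C^{2l}$ gives $t\le 1$, so $t^r\le t^2$ for every $r\ge 2$. In parallel, for $j\ge 1$ the exponent $l-r/2$ is bounded by $l-1$ as soon as $r\ge 2$, so $j^{(2l-r)/2}\le j^{l-1}$. Both of these extremes are attained simultaneously at $r=2$, so the inner $r$-sum (which has $2l-1$ terms) collapses to at most $(2l-1)\,t^2 j^{l-1}$. A standard integral comparison then gives $\sum_{j=1}^n j^{l-1}\le (n+1)^l/l$, and since $(2l-1)/l\le 2$ one obtains
\begin{equation*}
a_{n+1}\le 2cAt^2(n+1)^l.
\end{equation*}

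Closing the induction now only requires $2ct^2\le 1$, equivalently $A^{1/l}\ge 2cC^2$, which is exactly the content of the hypothesis $A\ge 2^l c^l C^{2l}$. The step I expect to be delicate is the double domination above: if one instead bounded each $r$-term separately, say using its own integral estimate in $j$, the resulting harmonic-type factor would force a much stronger hypothesis on $A$ (roughly $A\ge (c(2l-1))^l C^{2l}$). What saves the argument is that the worst $r$-term and the worst $j$-exponent are produced by the \emph{same} value $r=2$, allowing the $r$-sum to be dominated by $(2l-1)t^2j^{l-1}$ without spawning powers of $l$, after which the $1/l$ gain from the integral comparison cancels the linear length $2l-1$ to leave only the factor $2$ in the final constant, exactly matching the stated form of $A$.
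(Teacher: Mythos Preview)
Your proof is correct and is essentially the same as the paper's: both proceed by induction, bound $j^{(2l-r)/2}\le j^{l-1}$ for $r\ge 2$, use $CA^{-1/(2l)}\le 1$ to collapse the $r$-sum to $(2l-1)$ copies of the $r=2$ term, apply $\sum_{j=1}^n j^{l-1}\le (n+1)^l/l$, and close with $(2l-1)/l\le 2$ together with $A\ge 2^l c^l C^{2l}$. Your substitution $t=CA^{-1/(2l)}$ is just a convenient repackaging of the same computation.
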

\begin{proof} We derive the above inequality by induction. It is clearly valid
for $n=1$. Assume it is valid for $j=1,2,\ldots, n.$ Then
\begin{eqnarray*}
&a_{n+1}\le c\sum_{j=1}^n \sum_{r=2}^{2l}C^r(A j^l)^\frac{2l-r}{2l}\\
&\le c\,C^2\, A^{1-\frac{1}{l}} \sum_{r=2}^{2l}  C^{r-2} A^{-\frac{r-2}{2l}}\,
\sum_{j=1}^n j^{l-1} \le A' \frac{(n+1)^l}{l}
\end{eqnarray*}
where
$$
A'=c\,C^2\, A^{1-\frac{1}{l}}  \sum_{r=0}^{2l-2}  C^r A^{-\frac{r}{2l}}
$$
and we need to pick $A$ so that $\frac{A}{l}'\le A$. In particular,
$A=\max\{ 2^l c^l C^{2l}, C^{2l}, a_1\}$ will do because
$CA^{-\frac{1}{2l}}\le 1$, $ 2\,c\, C^2 A^{-\frac{1}{l}}\le 1$ and
$$
c\,C^2\, A^{1-\frac{1}{l}}  \sum_{r=0}^{2l-2}  C^r A^{-\frac{r}{2l}}\le c
\,C^2\, A^{1-\frac{1}{l}} (2l-1)\le c\,C^2\, A^{1-\frac{1}{l}} 2l\le l \,A.
$$
\end{proof}
\begin{lemma}\label{Lem6.2} Let  the sequence $\{X_i\}$ of random variables
satisfy (\ref{eq1.2}) with $p=2l$ and some positive integer $l$. Then there is
 a constant
$c_l$ depending only on $l$ such that
$$
ES_n^{2l}\le c_l\,  A_{2l}^{2l} \, n^l.
$$
\end{lemma}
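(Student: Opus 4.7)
The plan is to reduce the problem to a martingale moment bound via a Gordin-type decomposition and then to apply Lemma~\ref{Lem6.1} to the resulting martingale, for which the troublesome linear cross term in the binomial expansion vanishes by construction.

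\emph{Step 1 (martingale decomposition).} Fix an integer $N\ge 1$ and, for $1\le i\le N+1$, introduce the tail conditional expectations
\[
T_i\,:=\,E\Big[\sum_{j=i}^{N}X_j\;\Big|\;\cG_i\Big],\qquad T_{N+1}=0.
\]
By hypothesis (\ref{eq1.2}) and Minkowski's inequality, $\|T_i\|_{2l}\le\sum_{j\ge i}\|E[X_j|\cG_i]\|_{2l}\le A_{2l}$, uniformly in $i$ and $N$. Writing $X_j=(T_j-T_{j+1})+d_j$ with $d_j:=T_{j+1}-E[T_{j+1}|\cG_j]$ and summing in $j$, the first summand telescopes and we get
\[
S_n \,=\, T_1-T_{n+1}+M_n,\qquad M_n\,:=\,\sum_{j=1}^{n}d_j,\quad n\le N.
\]
Since $d_j$ is $\cG_{j+1}$-measurable and satisfies $E[d_j|\cG_j]=0$, the sequence $\{d_j\}$ is a martingale difference with respect to $\{\cG_{j+1}\}$ and $\|d_j\|_{2l}\le 2A_{2l}$. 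Because $\|T_1\|_{2l}$ and $\|T_{n+1}\|_{2l}$ are both at most $A_{2l}$, it suffices to bound $EM_n^{2l}$.

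\emph{Step 2 (apply Lemma~\ref{Lem6.1}).} Set $b_n:=EM_n^{2l}$. In the binomial expansion of $(M_n+d_{n+1})^{2l}$ the $r=1$ cross term vanishes,
\[
E[M_n^{2l-1}d_{n+1}]\,=\,E\big[M_n^{2l-1}\,E[d_{n+1}|\cG_{n+1}]\big]\,=\,0,
\]
because $M_n$ is $\cG_{n+1}$-measurable. For $r\ge 2$, H\"older's inequality yields
\[
|E[M_n^{2l-r}d_{n+1}^r]|\,\le\,b_n^{(2l-r)/(2l)}(2A_{2l})^r,
\]
so $b_{n+1}\le b_n+\sum_{r=2}^{2l}\binom{2l}{r}(2A_{2l})^r b_n^{(2l-r)/(2l)}$. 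Telescoping in $n$ and absorbing the base value $b_1\le(2A_{2l})^{2l}$ into the $r=2l$, $j=1$ term yields an inequality of exactly the form required by Lemma~\ref{Lem6.1} with $C$ a constant multiple of $A_{2l}$ depending only on $l$. The lemma then gives $b_n\le c_l'A_{2l}^{2l}n^l$, and combining with $\|T_1-T_{n+1}\|_{2l}\le 2A_{2l}$ via $(u+v)^{2l}\le 2^{2l-1}(u^{2l}+v^{2l})$ finishes the argument.

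\emph{Main obstacle.} The essential difficulty is the linear cross term $E[S_n^{2l-1}X_{n+1}]$, which does \emph{not} vanish for the raw variables $X_j$ and whose direct estimate would demand summability of $\|E[X_{j+1}|\cG_j]\|_{2l}$ in $j$, something not available from our hypotheses. The martingale decomposition is precisely the device that kills this term; in the whole scheme, hypothesis (\ref{eq1.2}) is used exactly once, to provide the uniform $L^{2l}$-bound on the $T_i$, and this is the single content that drives the argument through.
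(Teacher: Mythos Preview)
Your argument is correct and takes a genuinely different route from the paper's. The paper attacks the linear cross term $E[S_j^{2l-1}X_{j+1}]$ head-on: it expands $S_j^{2l-1}=\sum_{i\le j}(S_i^{2l-1}-S_{i-1}^{2l-1})=\sum_{i\le j}Z_i$ with $Z_i=X_i\sum_{r}S_i^rS_{i-1}^{2l-2-r}$, sums over $j$, and recognises that the resulting inner sum is $\sum_{i}E[Z_iW_i]$ with $W_i=\sum_{j\ge i}E[X_{j+1}|\cG_i]$; the bound $\|W_i\|_{2l}\le A_{2l}$ is exactly where (\ref{eq1.2}) enters, and this converts the dangerous term into one of order $A_{2l}^2\|S_j\|_{2l}^{2l-2}$, the same shape as the $r=2$ binomial term. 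You instead absorb the cross term \emph{structurally} by passing to a martingale via the Gordin coboundary decomposition $X_j=(T_j-T_{j+1})+d_j$, so that the $r=1$ term vanishes identically for $M_n$. The paper's approach is self-contained and keeps the original partial sums throughout; yours is more modular and makes explicit that (\ref{eq1.2}) is precisely the $L^{2l}$-boundedness of the tail projections needed for Gordin's trick. Both routes finish with Lemma~\ref{Lem6.1} in the same way. One small bookkeeping point: your $T_i$ and $d_j$ depend on the truncation level $N$, so for a given $n$ you should take $N\ge n$ and note (as you implicitly do) that the bound $\|d_j\|_{2l}\le 2A_{2l}$ is uniform in $N$, so the recursion feeding Lemma~\ref{Lem6.1} holds for all indices up to $n$ regardless of $N$.
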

\begin{proof}
We begin by expanding  $S_{j+1}^{2l}=(S_j+X_{j+1})^{2l}$ by the binomial
theorem,
$$
S_{j+1}^{2l}= S_j^{2l}+2l S_j^{2l-1}X_{j+1}+\sum_{r=2}^{2l} {2l\choose r}
S_j^{2l-r}X_{j+1}^r
$$
and expressing
$$
S_j^{2l-1}=\sum_{i=1}^j(S_i^{2l-1}-S_{i-1}^{2l-1})\\
=\sum_{1\le i\le j} X_i\sum_{r=0}^{2l-2} S_i^{r}S_{i-1}^{2l-2-r}.
$$
This enables us to rewrite
$$
S_{j+1}^{2l}=S_j^{2l}+2l \sum_{1\le i\le j } Z_i  X_{j+1}+\sum_{r=2}^{2l}
{2l\choose r}S_j^{2l-r}X_{j+1}^r
$$
where $Z_i=X_i\sum_{r=0}^{2l-2} S_i^{r}S_{i-1}^{2l-2-r}$.
Then,

\begin{align*}
ES_{n+1}^{2l}&=EX_1^{2l}+2l \sum_{1\le i\le j \le n }E Z_i  X_{j+1}+
\sum_{j=1}^n \sum_{r=2}^{2l} {2l\choose r}ES_j^{2l-r}X_{j+1}^r\\
&=2l \sum_{1\le i \le n }E Z_i  W_i+\sum_{j=1}^n \sum_{r=2}^{2l}
{2l\choose r}ES_j^{2l-r}X_{j+1}^r
\end{align*}
where $W_i= \sum_{j=i}^n E(X_{j+1}|\cF_i)$. We note that   $\|X_i\|_{2l}\le
\gamma_{2l}\le A_{2l}$ and $\|W_i\|_{2l}\le A_{2l}$.  Hence,

\begin{align*}
E[|Z_iW_i|]&\le \|\sum_{r=0}^{2l-2} S_i^{r}S_{i-1}^{2l-2-r}\|_\frac{l}{l-1}
\|X_i\|_{2l} \|W_i\|_{2l}\\
&\le  c_l  A_{2l}^2 ((E S_i^{2l})^\frac{l-1}{l}+
(ES_{i-1}^{2l})^\frac{l-1}{l} ).
\end{align*}

Next, for $r\ge 2$,
$$
|ES_j^{2l-r}X_{j+1}^r|\le \|S_j\|_{2l}^{2l-r} \|X_{j+1}\|_{2l}^r\le
A_{2l}^r \|S_j\|_{2l}^{2l-r}.
$$
It follows that
\begin{align*}
ES_{n+1}^{2l}&\le  c_l \bigg(\sum_{j=1}^n\big( \sum_{r=2}^{2l} A_{2l}^r
\|S_j\|_{2l}^{2l-r}+A_{2l}^2\big \|S_j\|_{2l}^{2l-2}+ A_{2l}^2
\|S_{j-1}\|_{2l}^{2l-2}\big) \bigg)\\
&\le c_l  \bigg(\sum_{j=1}^n\sum_{r=2}^{2l} A_{2l}^r \|S_j\|_{2l}^{2l-r} \bigg)
\end{align*}
where $c_l$ is an  absolute constant which depends only on $l$.
The sequence $a_n=E[S_n^{2l}]$ satisfies the condition of Lemma \ref{Lem6.1}
with $c= c_l$, $C= A_{2l}$ and $a_1\le \gamma_{2l}^{2l}$ and the result
follows.
\end{proof}

\section{Limiting variance}\label{sec4}
\setcounter{equation}{0}

In this section we will prove Theorem \ref{thm2.3}.
For each $i=1,..,\ell$ set
\[
Z_{i,n}=F_{i}\left(X^{(1)}(n),...,X^{(i)}(in)\right)
\]
 and $\Sigma_{i,N}=\sum_{n=1}^{N}Z_{i,n}$ so that $Z_n=\sum_{i=1}^\ell Z_{i,n}$
 and $\Sigma_N=\sum_{i=1}^\ell\Sigma_{i,N}$. Then, under the assumption
 (\ref{2.4}) the processes $\{Z_{i,n}\}_{n\ge 0},\, i=1,...,\ell$
and $\{Z_{n}\}_{n\ge0}$ are (one sided) stationary in the wide sense.
In view of (\ref{2.14}),
\begin{equation}\label{4.1.1}
EZ_{i,n}Z_{j,m}=0\,\,\mbox{if}\,\, i\ne j\,\,\mbox{and so}\,\,
\mbox{Var}\Sig_{N}=\sum_{i=1}^{\ell}\mbox{Var}(\Sig_{i,N}).
\end{equation}
Hence, $EZ_nZ_0=\sum_{i=1}^\ell EZ_{i,n}Z_{i,0}$. In the same way as Lemma 4.2
of \cite{KV1}
provides the estimate (\ref{4.1}) with $h(m)=C(\varpi_{q,p}([\frac 13m])+
\be(q,[\frac 13m])^{\del})$ for some $C>0$ and all $m$ large enough we obtain
that for all $n$ large enough and some $C>0$ independent of $n$,
\[
|EZ_{n,i}Z_{0,i}|\leq C(\varpi_{q,p}([\frac 13n])+\be(q,[\frac 13n])^{\del}).
\]
This together with Assumption \ref{ass2.1} with $\al,\la\geq 1$ yields that
\begin{equation}\label{4.6}
\sum_{n=1}^{\infty}n|E(Z_{n}Z_{0})|<\infty.
\end{equation}
By Proposition 8.3 and Theorem 8.6 from \cite{Br} (modified for
a one sided process) if a stationary in the wide sense process satisfies
(\ref{4.6}) then $s^{2}=\lim_{n\to\infty}\frac 1n$Var$\Sig_n$ exists and
Var$\Sigma_{N}$ is unbounded if and only if $s^{2}>0$ which is equivalent
to the fact that there exists no representation of the form
$Z_{n}=V_{n+1}-V_{n}$ where $V_n,\, n\geq 0$ is a square integrable stationary
in the wide sense
process. These together with (\ref{4.1.1}) implies that $s^{2}=0$ if and
only if Var$(\Sigma_{i,N})$ is bounded for each $i=1,...,\ell$.

Next, set
\[
S_{N}=\sum_{n=1}^{N}F(X(n),...,X(\ell n)), S_{i,N}=
\sum_{n=1}^{N}F_{i}(X(n),...,X(in)),
\]
\[
N_{\ell}=N_{\ell}^{(1)}=[N(1-\frac{1}{2\ell})]+1\,\,\mbox{and}\,\,
N_{\ell}^{(i)}=[N_{\ell}^{(i-1)}(1-\frac{1}{2\ell})]+1\,\,\mbox{for}\,\,
i=1,2,3,
... \]
\[
S_{\ell,N}^{(-1)}=S_{\ell,N}=\sum_{n=1}^{N}F_{\ell}(X(n),...,X(\ell n))
 \]
and
\[
S_{\ell,N}^{(2i-1)}=\sum_{n=1}^{N_{\ell}^{(i)}-1}F_{\ell}(X(n),...,
X(\ell n)),S_{\ell,N}^{(2i)}=S_{\ell,N}^{(2i-3)}-S_{\ell,N}^{(2i-1)},
i=1,2,3....
\]
Set also $\sig_{N}^{2}=var(S_{N})$ and $s_{N}^{2}=var(\Sigma_{N}).$

Now we can write
\begin{eqnarray}\label{4.7}
&\sigma_{N}^{2}=\mbox{Var}(\sum_{i=1}^{\ell-1}S_{i,N}+S_{\ell,N}^{(1)})+
\mbox{Var}(S_{\ell,N}^{(2)})\\
&+2\mbox{Cov}(\sum_{i=1}^{\ell-1}S_{i,N}+S_{\ell,N}^{(1)},\, S_{\ell,N}^{(2)}).
\nonumber\end{eqnarray}
Observe that $N_{\ell}\geq\frac{N}{2}$. Since $\ell m-in\geq\frac{N}{2}$
whenever $i<\ell,\, n\leq N$ and $N_{\ell}\leq m\leq N$ then
$|b_{i,\ell}(n,m)|\leq h([\frac{N}{2}])$ by (\ref{4.1}). Taking into account
Assumption \ref{ass2.1} with $\al,\la\geq 1$ and the choice of the
nonincreasing function $h$ we obtain that
\begin{eqnarray}\label{4.7+}
&|\mbox{Cov}(\sum_{i=1}^{\ell-1}S_{i,N},\, S_{\ell,N}^{(2)})|\leq
\sum_{i=1}^{\ell-1}\sum_{n=1}^{N}\sum_{m=N_{\ell}}^{N}|b_{i,\ell}(n,m)|\\
&\leq\ell N^{2}h([\frac{N}{2}])\leq 16\ell\sum_{n=1}^{\infty}nh(n)<\infty.
\nonumber\end{eqnarray}
Furthermore, since $|b_{\ell,\ell}(n,m)|\leq h(m-n)$ when $n<m$ we obtain
\begin{eqnarray}\label{4.7++}
&|\mbox{Cov}(S_{\ell,N}^{(1)},\, S_{\ell,N}^{(2)})|\leq\sum_{n=1}^{N_\ell-1}
\sum_{m=N_\ell}^N h(m-n)=\sum_{n=1}^{N_\ell-1}\sum_{j=N_\ell-n}^{N-n}h(j)=\\
&\sum_{j=1}^{N_-1}\sum_{n=\max(N_\ell-j,1)}^{\min(N-j,N_\ell)}h(j)=
\sum_{j=N-N_\ell}^{N_\ell-1}(N-N_\ell)h(j)+\sum_{j=1}^{N-N_\ell}jh(j)\nonumber\\
&+\sum_{j=N_\ell-1}^{N-1}(N-j)h(j)\leq\sum_{j=1}^{\infty}jh(j)<\infty.\nonumber
\end{eqnarray}

Next, define $\Sig_{\ell,N}^{(j)}$ for $j=-1,1,2,3....$ similarly to
$S_{\ell,N}^{(j)}$ using $X^{(1)}(n),...,X^{(\ell)}(\ell n)$ in place
of $X(n),...,X(\ell n)$. Observe that $N_{\ell}^{(i-1)}(1-\frac{1}{2\ell})
+1\geq N_{\ell}^{(i)}\geq N_{\ell}^{(i-1)}(1-\frac{1}{2\ell})$
for any  $i,j\leq\ell$ and so,
\begin{equation}\label{4.1.3}
jN_{\ell}^{(i)}-(j-1)N_{\ell}^{(i-1)}\geq\frac{1}{2}N_{\ell}^{(i-1)}\,\,
\mbox{and}\,\, i\geq N_{\ell}^{(i)}- N(1-\frac{1}{2\ell})^{i}\geq 0.
\end{equation}
Applying Lemma \ref{lem4.1} for
\[
G(X(n,m),X(2n,2m),...,X(\ell n,\ell m))=F_\ell(X(n ),...,X(\ell n))
F_\ell(X(m),...,X(\ell m))
\]
where $N^{(i-1)}_\ell>n,m\geq N^{(i)}_\ell$ we obtain taking into account
(\ref{4.1.3}) that
\begin{eqnarray*}
&|EF_\ell(X(n ),...,X(\ell n))F_\ell(X(m),...,X(\ell m))-EF_\ell(X^{(1)}(n),
...,X^{(\ell)}(\ell n))\\
&\times F_\ell(X^{(1)}(m),...,X^{(\ell)}(\ell m))|\leq
C\gam([\frac {N_\ell^{(i-1)}}{8}])
\end{eqnarray*}
where $\gam(n)=\varpi_{q,p}(n)+\be^{\del}(q,n)$ and $C>0$ depends only on
the initial parameters. Hence for all $i\geq 1$,
\begin{eqnarray}\label{4.8}
&|\mbox{Var}(S_{\ell,N}^{(2i)})-\mbox{Var}(\Sigma_{\ell,N}^{(2i)})|
\leq C(N_{\ell}^{(i-1)}-N_{\ell}^{(i)})^{2}\gam([\frac{(N_{\ell}^{(i-1)}}{8}])\\
&\leq 64C\sup_{m\geq 1}m^{2}\gam(m)\leq 256C\sum_{n=1}^\infty n\gam(n)
= c_{1}\nonumber
\end{eqnarray}
where $N_{\ell}^{(0)}=N$ and $c_{1}>0$ depends only on the initial parameters
and the expressions (\ref{2.8}) and (\ref{2.9}).

Next, assume that $s_N^2$ is bounded. Then by (\ref{4.1.1}) we see that
Var$(\Sigma_{i,N})$ is bounded in $N$ for each $i=1,...,\ell$. Proving one
direction of Theorem  \ref{thm2.3} we will derive from here by induction
in $j$ that for each $j$ there exists $C_j>0$ such that for all $N\geq 2$,
\begin{equation}\label{4.1.4}
\mbox{Var}(\sum_{i=1}^{j}S_{i,N})\leq C_{j}\ln^{2}N.
\end{equation}
When $j=1$ we have Var$S_{1,N}$=Var$\Sigma_{1,N}$ which is bounded
if $s_N^2$ is bounded. Now suppose that (\ref{4.1.4}) holds true for all $j$
up to $\ell-1$ and prove it for $j=\ell$. Recall that $Z_{\ell,n},\, n\geq 0$
is a stationary in the wide sense process, and so
\[
\mbox{Var}(\Sigma_{\ell,N}^{(2i)})=\mbox{Var}(\sum^{N_\ell^{(i-1)}-1}
_{n=N_\ell^{(i)}}Z_{\ell,n})=\mbox{Var}\Sigma_{\ell,N_\ell^{(i-1)}
-N_\ell^{(i)}}
\]
and the latter expression is bounded in view of our assumption on $s_N^2$.
This together with (\ref{4.8}) yields
\begin{equation}\label{4.1.5}
\mbox{Var}(S^{(2i)}_{\ell,N})\leq\mbox{Var}(\Sigma_{\ell,N}^{(2i)})+c_1\leq
c_2
\end{equation}
for some $c_2>0$ independent of $N$. Now by (\ref{4.7})--(\ref{4.7++}),
(\ref{4.1.5}) and the induction hypothesis
\begin{equation}\label{4.1.6}
\sig_N^2=\mbox{Var}(\sum_{i=1}^\ell S_{i,N})\leq c_3+2C_{\ell -1}\ln^2N+
2\mbox{Var}(S^{(1)}_{\ell,N})
\end{equation}
for some $c_3>0$ independent of $N$.

Next, applying the above definitions recursively for any $i$ such that
$N^{(i)}_\ell\geq 2$ we can write
\begin{equation}\label{4.1.7}
S^{(1)}_{\ell,N}= S^{(3)}_{\ell,N}+S^{(4)}_{\ell,N}=S^{(2i-1)}_{\ell,N}+
\sum_{j=2}^iS^{(2j)}_{\ell,N}.
\end{equation}
Hence,
\[
\mbox{Var}S^{(1)}_{\ell,N}\leq 2\mbox{Var}S^{(2i-1)}_{\ell,N}+2i
\sum_{j=2}^i\mbox{Var}S^{(2j)}_{\ell,N}
\]
where we use that $(\sum_{j=1}^ma_j)^2\leq m\sum_{j=1}^ma_j^2$. By (\ref{4.1.3})
we can choose $i=M\ln N$ for some fixed $M>0$ so that $2\leq N^{(i)}_\ell\leq
i+4$. Then Var$S^{(2i-1)}_{\ell,N}\leq C'(1+\ln^2(N))$  for some $C'>0$
independent of
$N$ and we obtain from
(\ref{4.1.5}) that Var$S^{(1)}_{\ell,N}\leq\tilde C(1+\ln^2N)$ for some
$\tilde C>0$ independent of $N$ which together with (\ref{4.1.6}) yields
(\ref{4.1.4}) with $j=\ell$.

Next, we will prove Theorem \ref{thm2.3} in the other direction assuming
that $s^2_N$ is unbounded which, as explained above, is equivalent to the
linear in $N$ growth of $s^2_N$ and to the fact that the corresponding
limiting variance $s^2$ is positive. Our goal is to show that then
\begin{equation}\label{4.1.8}
\sig^2=\lim_{N\to\infty}\frac 1N\sig^2_N>0.
\end{equation}
Recall, that the existence of the limit in (\ref{4.1.8}) follows from
\cite{KV1} and only its positivity should be proved in our situation.
The proof will proceed again by induction in $\ell$. For $\ell=1$ we
have $S_N=\Sigma_N$, and so if Var$\Sigma_N$ grows linearly in $N$ then
the same is true for Var$S_N$. Now suppose that we already established
for each $j=1,2,...,\ell-1$ that if Var$(\sum_{i=1}^j\Sigma_{i,N})$ grows
linearly in $N$ then the same is true for Var$(\sum_{i=1}^jS_{i,N})$ and
now we will prove this for $j=\ell$.
Indeed, assume that $s_N^2=$Var$(\sum_{i=1}^\ell\Sig_{i,N})$ grows linearly
in $N$. Then by (\ref{4.1.1}) either Var$(\sum_{i=1}^{\ell-1}\Sig_{i,N})$ or
Var$(\Sig_{\ell,N})$ grow linearly in $N$. In the latter case we obtain also
that Var$(\Sigma^{(2)}_{\ell,N})$ grows linearly in $N$ in view of
stationarity in the wide sense of $Z_{\ell,n},\, n\geq 0$. Then by (\ref{4.8})
we see that Var$(S^{(2)}_{\ell,N})$ grows linearly in $N$. This together
with (\ref{4.7})--(\ref{4.7++}) yields that $\sig_N^2$ grows at least
linearly in $N$ but since by \cite{KV1} a (finite) limit $\lim_{N\to\infty}
\frac 1N\sig_N^2$ exists we conclude that in this case $\sig^2_N$ grows
linearly in $N$ as required.

Now suppose that Var$(\Sigma_{\ell,N})$ is bounded while Var$(
\sum_{i=1}^{\ell-1}\Sigma_{i,N})$ grows linearly in $N$. Then
Var$(\Sigma^{(2i)}_{\ell,N})$ for all $i$ are also bounded by stationarity
of $Z_{\ell,n},\, n\geq 0$ in the wide sense which together with (\ref{4.8})
yields that Var$(S^{(2i)}_{\ell,N})$ are also bounded for all $i$.
Using again the
representation (\ref{4.1.7}) we conclude as before that Var$S^{(1)}_{\ell,N}
\leq\tilde{\tilde C}(1+\ln^2N)$ for some $\tilde{\tilde C}>0$ independent of $N$.
Since Var$(\sum_{i=1}^{\ell-1}\Sigma_{i,N})$ grows linearly in $N$ then by
the induction hypothesis Var$(\sum_{i=1}^{\ell-1}S_{i,N})$ grows linearly in
$N$, as well. It follows that
\begin{eqnarray}\label{4.1.9}
&\sig^2_N=\mbox{Var}(\sum_{i=1}^\ell S_{i,N})=\mbox{Var}(\sum_{i=1}^{\ell-1}
 S_{i,N})+\mbox{Var}S_{\ell,N}\\
 &+2\mbox{Cov}(\sum_{i=1}^{\ell-1}S_{i,N},S_{\ell,N})\geq\mbox{Var}
 (\sum_{i=1}^{\ell-1}S_{i,N})\nonumber\\
 &-2(\mbox{Var}(\sum_{i=1}^{\ell-1}S_{i,N}))^{1/2}\big((\mbox{Var}
 S^{(1)}_{\ell,N})^{1/2}+(\mbox{Var}S^{(2)}_{\ell,N})^{1/2}\big)\nonumber\\
 &\geq\mbox{Var}(\sum_{i=1}^{\ell-1}S_{i,N})(1-\hat CN^{-1/2}(\ln N+1))
 \nonumber\end{eqnarray}
 for some $\hat C>0$ independent of $N$. Hence, $\sig_N^2$ grows at least
 linearly in $N$ but, again, since finite limit $\lim_{N\to\infty}\frac 1N
 \sig_N^2$ exists $\sig_N^2$ grows, in fact, linearly in $N$ completing the
 proof of Theorem \ref{thm2.3}.  \qed

\section{Convergence estimates}\label{sec5}
\setcounter{equation}{0}

In this section we introduce martingale approximation technique which
is similar but a bit different from \cite{KV1}. Then we study the quadratic
variation of the constructed martingale and use it to prove Theorem
\ref{thm2.4}. The following representations from (5.2) in \cite{KV1} will be
useful here, as well.
\begin{eqnarray}\label{5.1}
&\hskip1.cm Y_{i,n}=Y_{i,n,1}+\sum_{r=1}^{\infty}[Y_{i,n,2^{r}}-
Y_{i,n,2^{r-1}}],\,\zeta_{i,N,0}(t)=\frac{1}{\sqrt{N}}\sum_{1\leq n\leq Nt}
Y_{i,n,1},\\
&\zeta_{i,N,r}(t)=\frac{1}{\sqrt{N}}\sum_{1\leq n\leq Nt}[Y_{i,n,2^{r}}-Y_{i,n,
2^{r-1}}],\,  r\geq1,\,\,\xi^{(u)}_{i,N}(t)=\sum_{r=0}^{u}\zeta_{i,N,r}(t)
\nonumber\\
&\mbox{and}\,\, \xi_{i,N}(t)=\sum_{r=0}^{\infty}\zeta_{i,N,r}(t)=
\frac{1}{\sqrt{N}}\sum_{1\leq n\leq Nt}Y_{i,n}.\nonumber
\end{eqnarray}

The convergence of series in (\ref{5.1}) follows from \cite{KV1}.
Similarly to Proposition 5.8 of \cite{KV1} we derive from Corollary 3.6(ii)
there that for any $r\in\bbN$, $1\leq i\leq\ell$ and $l\leq n+r$,
\begin{equation}\label{5.1.1}
||E(Y_{i,n,r}|\cF_{-\infty,l})||_{b}\leq c_{r}(n-l)
\end{equation}
for some sequence $c_r$ satisfying
\begin{equation}\label{5.1.2}
C_{r}=\sum_{m=0}^{\infty}c_{r}(m)\leq Cr
\end{equation}
where $b$ comes from Assumption \ref{ass2.1} and, recall, that $b\geq 4$ in
Theorem \ref{thm2.4} while $C>0$ depends on the initial parameters and
on the expressions (\ref{2.8}) and (\ref{2.9}). Furthermore, similarly to
the proof of Proposition 5.9 from \cite{KV1} we obtain that
\begin{equation}\label{5.1.3}
\sum_{n\geq r}\sup_{N\geq1}||\sup_{0\leq t\leq T}|\zeta_{i,N,n}(t)|\:||_{2}
\leq CT\sum_{n=2^{r-1}}^{\infty}(\be(q,n))^{\del}
\end{equation}
where $C>0$ depends only on the initial parameters and
the expressions (\ref{2.8}) and (\ref{2.9}). Observe that since
\[
\sum_{n=m}^{\infty}(\be(q,n))^{\del}\leq m^{-\la}\sum_{n=m}^{\infty}
n^{\la}(\be(q,n))^{\del}
\]
then under Assumption 2.1 with $b\geq 2$ we obtain from (\ref{5.1.3}) that
\begin{equation}\label{5.2}
||\underset{0\leq t\leq T}{\sup}|\xi_{i,N}(t)-\xi_{i,N}^{(u)}(t)|
\thinspace||_{2}\leq CT2^{-u\la}
\end{equation}
where $C>0$ depends only on the initial parameters and
the expressions (\ref{2.8}) and (\ref{2.9}).

\subsection{Martingale approximations}\label{subsec5.1}

For any fixed $n,u\in\bbN$ and $1\leq i\leq \ell$ set
\begin{equation*}
W_{i,n,2^{u}}=Y_{i,n,2^{u}}+R_{i,n,u}-R_{i,n-1,u}
\end{equation*}
where $R_{i,v,u}=\sum_{s\geq v+1}E(Y_{i,s,2^{u}}|\cF_{-\infty,v+2^{u}})$.
Clearly, $\{W_{i,n,2^{u}}\}_{n\geq1}$ is a martingale difference sequence
with respect to the filtration $\{\cF_{-\infty,n+2^{u}}\}_{n\geq1}$.
For any $1\leq i\leq\ell$ and $u(N)=[\frac{\log_{2}(N)}{\la+8}]$
define the truncated martingale differences $W_{i,n}^{(N)}=
\bbI_{\{n\leq iN\}}W_{i,n,2^{u(N)}}$ and $W_n^{(N)}=\sum_{i=1}^{\ell}
W_{i,n}^{(N)}$ where $\bbI_A=1$ if an event $A$ occurs and $=0$, otherwise.
Set $M_{i,n}^{(N)}=\sum_{m=1}^{n}W_{i,m}^{(N)}$
and
\[
M_n^{(N)}=\sum_{m=1}^{n}W_m^{(N)}=\sum_{i=1}^{\ell}M_{i,n}^{(N)}.
\]
When $N$ is fixed the sequence $M_{n}^{(N)},\, n\geq 1$ is a
martingale with respect to the filtration $\{\cF_{-\infty,n+2^{u(N)}}
\}_{n\geq1}$ and when $N$ changes we have a martingale array. Taking
into account that $\xi_{N}(t)=\sum_{i=1}^{\ell}\xi_{i,N}(t)$ we obtain
 by (\ref{5.1.2}), (\ref{5.1.3}) and (\ref{5.2}) that
\begin{equation}\label{5.3}
||\xi_{N}(1)-\frac1{\sqrt{N}}M_{N\ell}^{(N)}||_{2}\leq
C(N^{-\frac{\la}{\la+8}}+N^{-\frac{\lambda+6}{2(\lambda+8)}})
\end{equation}
where $C>0$ depends only on the initial parameters and
 the expressions (\ref{2.8}) and (\ref{2.9})

\subsection{Quadratic variation estimates}\label{subsec5.2}

Our goal in this subsection is to obtain the following result.

\begin{proposition}\label{prop5.1}
Suppose that Assumption \ref{ass2.1} holds true with $\al,\la>1$. Let
$1\leq i\leq j\leq\ell$ and set
\[
Z_{n}=Z_n^{(i,j,N)}=W_{i,n,2^{u(N)}}W_{j,n,2^{u(N)}}.
\]
Then
\begin{equation}\label{5.3.1}
||\frac{1}{N}\sum_{n=1}^{iN}Z_{n}-iD_{i,j}||_{2}\leq CN^{-\frac{1}{4}
\min(\min(\al,\la)-1,\frac{\la}{\la+8})}.
\end{equation}
where $D_{i,j}$ was introduced in Theorem \ref{thm2.2} and $C$ depends only
on the initial parameters and the expressions (\ref{2.8}) and (\ref{2.9}).
\end{proposition}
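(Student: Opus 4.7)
My plan is to decompose the squared $L^2$ error as
\begin{equation*}
\Big\|\tfrac{1}{N}\sum_{n=1}^{iN} Z_n - iD_{i,j}\Big\|_2^2 \leq 2\Big|\tfrac{1}{N}\sum_{n=1}^{iN} EZ_n - iD_{i,j}\Big|^2 + 2\,\mathrm{Var}\Big(\tfrac{1}{N}\sum_{n=1}^{iN} Z_n\Big)
\end{equation*}
and to control the bias and the fluctuation term separately. The structural fact that makes this tractable is that for every $i,j$, both $\{W_{i,n,2^{u(N)}}\}_{n\geq 1}$ and $\{W_{j,n,2^{u(N)}}\}_{n\geq 1}$ are martingale differences with respect to the \emph{same} filtration $\{\cF_{-\infty,\,n+2^{u(N)}}\}$, so $E(W_{i,n,2^{u(N)}}W_{j,m,2^{u(N)}})=0$ whenever $n\neq m$, and hence for $i\leq j$,
\begin{equation*}
\sum_{n=1}^{iN} EZ_n \;=\; E\big(M_{i,iN}^{(N)}\,M_{j,jN}^{(N)}\big).
\end{equation*}

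For the bias I would apply (\ref{5.3}) twice, with $t=i$ and $t=j$, to replace $\tfrac{1}{\sqrt N}M_{i,iN}^{(N)}$ by $\xi_{i,N}(i)$ (and similarly for $j$) in $L^2$ with error $O(N^{-\la/(\la+8)}+N^{-(\la+6)/(2(\la+8))})$, and transfer this via Cauchy--Schwarz together with uniform $L^2$ bounds on $\xi_{i,N}(i)$ (coming from Lemma \ref{Lem6.2} applied to the $Y$'s) to the bias $\big|\tfrac{1}{N}E(M_{i,iN}^{(N)}M_{j,jN}^{(N)})-E(\xi_{i,N}(i)\xi_{j,N}(j))\big|$. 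Theorem \ref{thm2.2} guarantees $E(\xi_{i,N}(i)\xi_{j,N}(j))\to iD_{i,j}$; I would extract an explicit rate $O(N^{-(\min(\al,\la)-1)})$ by bounding the tail of the covariance sum using (\ref{4.1}) and the summability in Assumption \ref{ass2.1} with $\al,\la>1$. Combining gives squared bias $\leq CN^{-\min(\min(\al,\la)-1,\la/(\la+8))}$.

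For the variance I would expand $\mathrm{Var}(\tfrac{1}{N}\sum Z_n)=\tfrac{1}{N^2}\sum_{n,m}\mathrm{Cov}(Z_n,Z_m)$ and split by $|m-n|\leq 3\cdot 2^{u(N)}$ vs.\ $>3\cdot 2^{u(N)}$. The near-diagonal part is handled by the uniform bound $\|W_{\cdot,n,2^{u(N)}}\|_4\leq C$, which follows from $b\geq 4$ in Theorem \ref{thm2.4} combined with (\ref{5.1.1})--(\ref{5.1.2}) and Lemma \ref{Lem6.2}; this gives $\|Z_n\|_2\leq C$ and a contribution of order $O(2^{u(N)}/N)=O(N^{-(\la+7)/(\la+8)})$. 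For the off-diagonal part I would decompose
\begin{equation*}
W_{i,m,2^{u(N)}}=Y_{i,m,2^{u(N)}}+(R_{i,m,u(N)}-R_{i,m-1,u(N)})
\end{equation*}
and similarly for $j$; the ``local'' piece $Y_{i,m,2^{u(N)}}$ is $\cF_{m-2^{u(N)},\,m+2^{u(N)}}$-measurable, so its correlation with $Z_n\in\cF_{-\infty,\,n+2^{u(N)}}$ is controlled, via Lemma \ref{lem4.1} applied to the product $Z_n\cdot Y_{i,m,2^{u(N)}}Y_{j,m,2^{u(N)}}$ (the H\"older structure (\ref{2.5}) for the product being supplied by Lemma \ref{lem3.1}), by the mixing factor $\varpi_{q,p}(|m-n|-2\cdot 2^{u(N)})+\be(q,\cdot)^{\del}$, while the residual $R$-pieces are $L^4$-bounded using (\ref{5.1.1})--(\ref{5.1.2}). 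Weighted summation over $k=|m-n|$ using $\al,\la>1$ produces a fluctuation bound of order $N^{-(\min(\al,\la)-1)/2}$.

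The main obstacle is the off-diagonal covariance bound. The residuals $R_{i,m,u(N)}$ are conditional expectations of the infinite future tail $\sum_{s\geq m+1}Y_{i,s,2^{u(N)}}$, so no direct mixing coefficient between $\cF_{-\infty,\,n+2^{u(N)}}$ and a ``future'' $\sigma$-algebra localizes them; one has to exploit the telescoping martingale-difference structure of $R_{i,m,u(N)}-R_{i,m-1,u(N)}$ together with (\ref{5.1.1})--(\ref{5.1.2}) to show that its long-range part contributes only at an order negligible compared with $N^{-(\min(\al,\la)-1)/2}$. Once the bias and fluctuation pieces are assembled, taking the square root of the sum of the two squared bounds and keeping the worse of the exponents yields the claimed rate $N^{-\tfrac14\min(\min(\al,\la)-1,\la/(\la+8))}$.
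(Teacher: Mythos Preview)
Your treatment of the bias term is essentially the paper's own argument: the authors too write $\sum_{n=1}^{iN}EZ_n=E\big((\sum W_{i,n,2^u})(\sum W_{j,n,2^u})\big)$, pass to $E(\xi_{i,N}^{(u)}(i)\xi_{j,N}^{(u)}(i))$ and then to $iD_{i,j}$ with error $O(N^{-\la/(2(\la+8))})$. So that half of your plan is fine.

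The fluctuation part, however, has a real gap. Two points:

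\textbf{(i)} The bound $\|W_{\cdot,n,2^{u(N)}}\|_4\leq C$ is not correct. From (\ref{5.1.1})--(\ref{5.1.2}) the residual $R_{i,v,u}$ is only bounded in $L^b$ by $C_{2^u}\leq C\,2^{u}$, so $\|W_{i,n,2^{u(N)}}\|_b\leq C\,2^{u(N)}$ and $\|Z_n\|_2\leq C\,2^{2u(N)}$ (the paper uses exactly these bounds). This is repairable --- the extra powers of $2^{u(N)}\approx N^{1/(\la+8)}$ are absorbed by the denominator $N$ --- but it is not what you wrote.

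\textbf{(ii)} The decisive error is the localisation claim for $Y_{i,m,2^{u(N)}}$. By the definition (\ref{3.2}), for $m=in$ the random variable $Y_{i,m,2^{u}}$ is a function of $X_{2^u}(n),X_{2^u}(2n),\ldots$ and of the conditioning $\cF_{m-2^u,m+2^u}$; hence it is only $\cF_{m/i-2^u,\,m+2^u}$-measurable, \emph{not} $\cF_{m-2^u,\,m+2^u}$-measurable. Consequently $Z_n$ and $Z_m$ share, through the arithmetic-progression indices $sn'$ and $tm'$ with $s,t\leq\ell$, many common (or nearby) $X$-variables even when $|m-n|$ is large, and $\mathrm{Cov}(Z_n,Z_m)$ simply does not decay with $|m-n|$. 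Your split ``$|m-n|\leq 3\cdot 2^{u(N)}$ versus $>3\cdot 2^{u(N)}$'' therefore does not isolate the strongly correlated pairs, and the application of Lemma~\ref{lem4.1} to $Z_nZ_m$ does not produce a gap of order $|m-n|$.

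This is precisely why the paper does \emph{not} estimate $\mathrm{Cov}(Z_n,Z_m)$ directly. Instead it passes to blocks $B_r$ of length $l\sim\sqrt N$ (Lemmas \ref{lem5.3}--\ref{lem5.4}): within a block the cross terms $G_r-\sum_{n\in B_r}Z_n$ form a martingale difference in the block index, while for the block products $V_r$ one introduces the combinatorial set
\[
A_r=\{r':\ \min_{1\leq s,t\leq\ell^2}\min_{n\in B_r,\,n'\in B_{r'}}|sn-tn'|<l\},
\]
shows $|A_r|\leq 4\ell^5$, and only for $r'\notin A_r$ can Lemma~\ref{lem4.1} be applied with a genuine gap $\geq l/\ell^2$ between the index sets $\gamma_1,\gamma_2$. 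The bounded cardinality of $A_r$ is what replaces the decay-in-$|m-n|$ that your argument tacitly assumes but which is unavailable in the nonconventional setting.
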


We prove this proposition in several steps formulated as separate lemmas.
\begin{lemma}\label{lem5.2}
Let $1\le i\leq\ell$. Suppose that $\{n_{k}\}_{k=1}^{\infty}$ is a
strictly increasing sequence of natural numbers. Then, for any $u,m,k\in\bbN$
such that $k\leq\frac{b}{2}$,
\[
||\sum_{s=v}^{v+m-1}Y_{i,in_{s},2^{u}}||_{2k},||\sum_{s=v}^{v+m-1}
Y_{i,in_{s}}||_{2k}\leq C\sqrt{m}
\]
where $C$ depends only on the initial parameters and the expressions
(\ref{2.8})-(\ref{2.9}).
\end{lemma}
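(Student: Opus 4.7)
The plan is to apply Lemma~\ref{Lem6.2} with $p=2k\leq b$. That result yields
$E\bigl|\sum_{s=v}^{v+m-1}X_s\bigr|^{2k}\leq c_k A_{2k}^{2k}\,m^k$,
which is exactly the bound $\|\sum X_s\|_{2k}\leq C\sqrt m$ required, provided
the dependence condition (\ref{eq1.2}) can be verified with the constants
$\gamma_{2k}$ and $A_{2k}$ depending only on the initial parameters and on
(\ref{2.8})--(\ref{2.9}) (and in particular uniformly in $u$).

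\textbf{Step 1 (moment bound).} The uniform moment bound
$\sup_s\|Y_{i,in_s}\|_{2k}\leq\gamma_{2k}$ (and the analogous bound for
$Y_{i,in_s,2^u}$) is routine: use the polynomial growth bound (\ref{2.5})
on $F$ (inherited by $F_i$ via (\ref{2.12})--(\ref{2.13})), the moment
hypothesis (\ref{2.10}) with $b\geq 2k$, Lemma~\ref{lem3.2}, and the fact
that conditional expectation is an $L^b$-contraction (which handles the
replacement of $X(q_j(n))$ by $X_{2^u}(q_j(n))$ and the outer conditional
expectation defining $F_{i,n,r}$).

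\textbf{Step 2 (choice of filtration).} For the untruncated sequence
$X_s=Y_{i,in_s}$ take $\cG_s=\cF_{-\infty,in_s}$, and for
$X_s=Y_{i,in_s,2^u}$ take $\cG_s=\cF_{-\infty,in_s+2^u}$. In both cases
$X_s$ is $\cG_s$-measurable. The key structural point is (\ref{2.14}):
$\int F_i(x_1,\ldots,x_{i-1},x_i)\,d\mu(x_i)=0$. This, together with
Corollary~3.6(ii) of \cite{KV1} (which yields (\ref{5.1.1}) in the
truncated case and, when applied directly to $Y_{i,in_j}$, the analogous
inequality in the untruncated case), shows that
$\|E(X_j|\cG_v)\|_{2k}$ decays in the separation between $\cG_v$ and the
block of indices $(n_j,2n_j,\ldots,in_j)$ governing $X_j$; this separation
is proportional to $i(n_j-n_v)\geq i(j-v)$ because the $n_s$ are strictly
increasing.

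\textbf{Step 3 (dependence bound).} The term $j=v$ contributes at most
$\gamma_{2k}$. For $j>v$ the bounds of Step~2 give
$\|E(X_j|\cG_v)\|_{2k}\leq C\bigl(\varpi_{q,p}(\text{gap}_{j,v})+
\beta(q,\text{gap}_{j,v})^{\delta}\bigr)$
with $\text{gap}_{j,v}\geq\tfrac12 i(j-v)$ once $j-v$ is large enough
(for the truncated case, when $i(j-v)\geq 2\cdot 2^u$ the shift by $2^u$
is harmless and absorbed into a constant factor). Summing over $j$,
$\sup_v\sum_{j\geq v}\|E(X_j|\cG_v)\|_{2k}\leq\gamma_{2k}+C\sum_{n\geq 1}
\bigl(\varpi_{q,p}(n)+\beta(q,n)^{\delta}\bigr)<\infty$
by Assumption~\ref{ass2.1} (the bare series in (\ref{2.8}) and (\ref{2.9})
with $\alpha=\lambda=0$ suffice). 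Thus $A_{2k}<\infty$ with the required
uniformity.

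\textbf{Main obstacle.} The one delicate point is ensuring the bound in
Step~3 is independent of $u$ for the truncated sequence: a crude use of
(\ref{5.1.2}) would introduce a factor $C\cdot 2^u$. The remedy is to use
the mixing-type decay of $c_r(\cdot)$ for large argument directly (via
(\ref{4.3}) with the appropriate $\cG,\cH$), exploiting that once the gap
$i(n_j-n_v)$ exceeds $2\cdot 2^u$ the truncation width $2^u$ only changes
the effective gap by a constant factor, so the tail bound reduces to the
untruncated one. This is the only non-routine computation; with it in
hand, (\ref{eq1.2}) holds uniformly, Lemma~\ref{Lem6.2} applies, and
$\|S_m\|_{2k}\leq c_k^{1/(2k)}A_{2k}\sqrt m$ follows for both sequences,
completing the proof.
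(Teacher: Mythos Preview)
Your plan to verify (\ref{eq1.2}) and invoke Lemma~\ref{Lem6.2} is the right engine, but the bound you claim for $A_{2k}$ in the truncated case is not uniform in $u$, and your remedy for the ``Main obstacle'' does not close the gap. With $\cG_v=\cF_{-\infty,in_v+2^u}$, the mixing decay coming from (\ref{4.3}) controls $\|E(Y_{i,in_j,2^u}\mid\cG_v)\|$ only once the effective separation $i(n_j-n_v)-2\cdot2^u$ is positive. For the indices $j>v$ with $i(n_j-n_v)\leq 2\cdot2^u$---and there can be as many as $2\cdot2^u/i$ of them when the $n_k$ are consecutive integers---only the trivial moment bound is available. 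Indeed, already for $i=1$, $n_k=k$ and $\beta\equiv0$ one has that $Y_{1,v+1,2^u}$ is $\cG_v$-measurable, so $E(Y_{1,v+1,2^u}\mid\cG_v)=Y_{1,v+1,2^u}$ exactly. These head terms alone force $A_{2k}\geq c\,2^u$, and Lemma~\ref{Lem6.2} then gives only $C\,2^u\sqrt m$, not the $u$-free bound the lemma asserts. (There is also a secondary issue in the untruncated case: $Y_{i,in_s}=F_i(X(n_s),\dots,X(in_s))$ need not be $\cF_{-\infty,in_s}$-measurable at all when $\beta(q,r)>0$, so Lemma~\ref{Lem6.2} does not apply with your filtration there either.)

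The paper avoids both problems by \emph{not} applying Lemma~\ref{Lem6.2} to $Y_{i,in_s,2^u}$ directly. Instead it applies it separately to each dyadic increment $Y_{i,in_s,2^{r+1}}-Y_{i,in_s,2^r}$, with its own filtration $\{\cF_{-\infty,in_m+2^{r+1}}\}_m$. The same head/tail dichotomy arises, with again $O(2^r)$ head terms, but now every term carries the extra smallness factor $(\beta(q,2^r))^\delta$ coming from the change of truncation level; hence $A_{2k}\leq C\,2^r(\beta(q,2^r))^\delta$ at level $r$, which is (\ref{5.7}). The telescoping identities
\[
Y_{i,in_s}=Y_{i,in_s,1}+\sum_{r\geq1}(Y_{i,in_s,2^r}-Y_{i,in_s,2^{r-1}}),\qquad
Y_{i,in_s,2^u}=Y_{i,in_s}+\sum_{r\geq u}(Y_{i,in_s,2^r}-Y_{i,in_s,2^{r+1}}),
\]
together with $\sum_r 2^r(\beta(q,2^r))^\delta<\infty$ (which follows from (\ref{2.9})), then yield the $u$-independent constant. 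The telescoping is precisely what trades one unmanageable head of size $2^u$ for a summable sequence of heads of size $2^r$, each damped by $(\beta(q,2^r))^\delta$.
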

\begin{proof}
Let $s>s'$. Taking $l,n\in\bbN$ which satisfy $l\geq(i-1)n$
and $in\geq l+2s$ we can derive from Theorem 3.4 in \cite{KV1} that
\begin{equation*}
||E(Y_{i,in,s}|\cF_{-\infty,l+s})-E(Y_{i,in,s'}|\cF_{-\infty,l+s})||_{b}
\leq C_{1}\varpi_{q,p}(in-l-2s)(\be(q,s'))^{\del}
\end{equation*}
for some $C_1>0$ depending only on the initial parameters.
(see the proof of Lemma 3.11 in the early preprint version arXiv:1012.2223v2
of \cite{KV1}).
On the other hand, if $l<(i-1)n$ and $2s<n$ then by the contraction
of the conditional expectations similarly to the above,
\begin{eqnarray*}
&||E(Y_{i,in,s}-Y_{i,in,s'}|\cF_{-\infty,l+s})||_{b}\leq||E(Y_{i,in,s}-
Y_{i,in,s'}|\cF_{-\infty,(i-1)n+s})||_{b}\\
&\leq C_{2}\varpi_{q,p}(n-2s)(\be(q,s'))^{\del}
\end{eqnarray*}
for some $C_2>0$ depending only on the initial parameters.
Let $k,r,m\in\bbN$ and set $s=2^{r+1}$, $s'=2^{r}$, $l=in_{m}$ and $n=n_{k}$.
Since $n_k-n_{k'}\geq k-k'$ if $k\geq k'$ there exist no more than $4s=2^{r+3}$
natural numbers $k\geq m$ which do not satisfy either $in_m\geq (i-1)n_k$ and
$in_k\geq in_m+2^{r+2}$ or $in_m<(i-1)n_k$ and $n_k>2^{r+2}$, i.e. for each $m$
we can use one of two inequalities above with such $s,s',l$ and $n=n_k\geq n_m$
except for at most $2^{r+3}$ of $k$'s.
Using again Theorem 3.4 from \cite{KV1} (or Lemma 3.12 from the above mentioned
 preprint), the contraction of the conditional
 expectations  to bound those (at most) $2^{r+3}$ summands,
the estimates above and the fact that $\sum_{n\in\bbN}\varpi_{q,p}(n)<\infty$
we obtain
\[
\sup_{m}\sum_{k\geq m}||E(Y_{i,in_{k},2^{r+1}}-Y_{i,in_{k},2^{r}}|\cF_{in_{m}+
2^{r+1}})||_{b}\leq C_{3}2^{r}\left(\be(q,2^{r})\right)^{\del}
\]
for some $C_3>0$ depending only on the initial parameters and the expression
(\ref{2.8}).
Applying Lemma \ref{Lem6.2} with
\[
S_{z}=Y_{i,in_{s+z-1},2^{r}}-Y_{i,in_{s+z-1},2^{r+1}}
\]
 yields for $k\leq\frac{b}{2}$ that
\begin{equation}\label{5.7}
||\sum_{s=v}^{v+m-1}Y_{i,in_{s},2^{r}}-Y_{i,in_{s},2^{r+1}}||_{2k}\leq
C_{4}\sqrt{m}2^{r}\left(\be(q,2^{r})\right)^{\del}
\end{equation}
for some $C_4>0$ depending only on the initial parameters and the expression
(\ref{2.8}).
Since $Y_{i,in_{s},2^{u}}=Y_{i,in_{s}}+
\sum_{r=u}^{\infty}Y_{i,in_{s},2^{r}}-Y_{i,in_{s},2^{r+1}}$ then
we obtain for any $u>0$ that
\begin{equation}\label{5.8}
||\sum_{s=v}^{v+m-1}Y_{i,in_{s},2^{u}}||_{2k}\leq C_{4}\sqrt{m}
\sum_{r=u}^{\infty}2^{r}\left(\be(q,2^{r})\right)^{\del}+||\sum_{s=v}^{v+m-1}
Y_{i,in_{s}}||_{2k}.
\end{equation}
Since $Y_{i,in}=Y_{i,in,1}+\sum_{r=1}^{\infty}(Y_{i,in,2^{r}}-Y_{i,in,
2^{r-1}})$ almost surely then by (\ref{5.7}) and (\ref{5.1.1})
applied with $r=1$ together with Lemma \ref{Lem6.2}  applied
with $\{Y_{i,in_{s},1}\}_{s=v}^{\infty}$ we obtain  $||\sum_{s=v}^{v+m-1}Y_{i,
in_{s}}||_{2k}\leq C\sqrt{m}$ and the present lemma follows by (\ref{5.8}).
\end{proof}

\begin{lemma}\label{lem5.3}
Suppose that Assumption 2.1 holds true with $\al,\la>1$. Let $1\leq i,j\leq
\ell$.
 Let $N\in\bbN$. Set $l=l(N)=[N^{\frac{1}{2}}]$, $N_{l}=[\frac{N}{l}]$,
\[
V_{N,r}=V_{r}=(\sum_{n\in B_{r}}Y_{i,n,2^{u(N)}})(\sum_{n\in B_{r}}
Y_{j,n,2^{u(N)}})
\]
 and $U_{r}=V_{r}-EV_{N,r}$, where $B_{r}=\mathbb{\bbN}\bigcap(l(r-1),lr].$
Then,
\begin{eqnarray*}
||\frac{1}{N}\sum_{r=1}^{N_{l}}U_{N,r}||_{2}\leq CN^{-\frac{1}{4}
\min(\min(\al,\la)-1,\frac{\la}{\la+8})}.
\end{eqnarray*}
where $C$ depends on the initial parameters and the expressions (\ref{2.8}) and
(\ref{2.9}).
\end{lemma}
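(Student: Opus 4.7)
The approach is to expand
$$\bigg\|\frac{1}{N}\sum_{r=1}^{N_l}U_r\bigg\|_2^{\,2} = \frac{1}{N^2}\sum_{r=1}^{N_l}\mathrm{Var}(V_r) + \frac{2}{N^2}\sum_{1\le r<r'\le N_l} E(U_rU_{r'}),$$
estimate the diagonal and off-diagonal contributions separately, and take a square root at the end. The target is to bound each contribution by $C N^{-\frac12\min(\min(\al,\la)-1,\,\la/(\la+8))}$ in the $L^2$-squared norm.

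For the diagonal I will use that each factor $\sum_{n\in B_r}Y_{\cdot,n,2^{u(N)}}$ has at most $l\sim\sqrt N$ non-zero summands and that $b\ge 4$ in Theorem \ref{thm2.4}, so Lemma \ref{lem5.2} with $k=2$ gives an $L^4$ bound $C\sqrt l$ on each factor. Cauchy--Schwarz then yields $\|V_r\|_2\le Cl$, hence $\frac{1}{N^2}\sum_r\mathrm{Var}(V_r)\le Cl^2 N_l/N^2 = O(N^{-1/2})$, already giving an $O(N^{-1/4})$ contribution in $L^2$-norm.

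For $r'>r$, $V_r$ is measurable in a $\sig$-algebra $\cF_{\cdot,\,lr+2^{u(N)}}$, while the lowest $X$-index entering $V_{r'}$ is of order $l(r'-1)/\ell - 2^{u(N)}$ (this low end comes from the $Y_{a,am,2^{u(N)}}$ with $a=\ell$ and $am\in B_{r'}$, for which $m$ can be as small as $l(r'-1)/\ell$). Since $2^{u(N)}=N^{1/(\la+8)}=o(l)$, the effective gap is $\asymp l(r'-r)/\ell$ once $r'-r$ exceeds a fixed constant; the mixing-approximation argument of Lemma \ref{lem4.1}, applied to each of the four terms in the expansion of $EV_rV_{r'}$ (with Lemma \ref{lem5.2} controlling the block $L^2$-norms), then gives
$$|E(U_rU_{r'})|\le Cl^2\bigl(\varpi_{q,p}([cl(r'-r)])+\be(q,[cl(r'-r)])^\del\bigr).$$
Under Assumption \ref{ass2.1} with $\al,\la>1$, the function $\gam(m)=\varpi_{q,p}(m)+\be(q,m)^\del$ satisfies $\sum_{m\ge M}\gam(m)\le CM^{-(\min(\al,\la)-1)}$, and summing over $r<r'$ yields a contribution of order $N^{-\frac12(\min(\al,\la)-1)}$ to the $L^2$-squared norm. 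The finitely many short-gap pairs are absorbed in the trivial $Cl^2$ bound, contributing only $O(N^{-1/2})$.

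The second term $\la/(\la+8)$ in the minimum arises from the truncation level $2^{u(N)}$ built into the definition of $V_r$: when refining the covariance estimate to pass from the truncated block variables to genuinely finite-range ones, one loses an error of order $2^{-u(N)\la}=N^{-\la/(\la+8)}$, via (\ref{5.1.3}) and (\ref{5.2}). Balancing this against the mixing rate $\min(\al,\la)-1$ produces the $\min$ in the stated exponent. The main obstacle is the off-diagonal estimate, which is subtle because the summation blocks $B_r$ in the $n$-variable do not yield disjoint blocks of $X$-indices once $\ell>1$; the fix is to split each $V_{r'}$-sum into a ``high'' part (handled by mixing) and a ``low'' part (handled by the truncation estimate), and to choose the cutoff so that the two errors balance at the claimed rate.
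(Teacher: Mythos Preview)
Your diagonal estimate and the identification of the $N^{-\la/(\la+8)}$ truncation error are correct and match the paper. The off-diagonal argument, however, has a fatal gap: the claim that ``the effective gap is $\asymp l(r'-r)/\ell$ once $r'-r$ exceeds a fixed constant'' is false. The highest $X$-index in $V_r$ is $lr+2^{u(N)}$ and the lowest in $V_{r'}$ is $l(r'-1)/\ell-2^{u(N)}$, so the gap is $l\big((r'-1)-\ell r\big)/\ell$ up to $O(2^{u(N)})$, and this is negative unless $r'>\ell r$. For $\ell>1$ the $X$-index ranges of $V_r$ and $V_{r'}$ therefore overlap for \emph{every} pair with $r<r'\le\ell r$; these pairs number $\asymp N_l^2$, not $O(N_l)$, and the trivial bound $|EU_rU_{r'}|\le Cl^2$ on each gives only $O(1)$ after dividing by $N^2$. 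Your proposed ``high/low'' split does not rescue this: a single term $Y_{i,im}$ with $im\in B_{r'}$ already involves $X(m),\dots,X(im)$ simultaneously and cannot be assigned wholesale to a ``high'' or ``low'' part, nor is there a truncation-type estimate making the low part small.

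The paper's remedy is structurally different and does not try to separate the index ranges of $V_r$ and $V_{r'}$ as a whole. It defines, for each $r$, the arithmetically bad set $A_r=\{r':|sn-tn'|<l$ for some $1\le s,t\le\ell^2$, $n\in B_r$, $n'\in B_{r'}\}$, shows $|A_r|\le 4\ell^5$ uniformly in $r$ (since $r'\in A_r$ forces $|r'-tr/s|<2\ell$ for some $s,t\le\ell^2$), and handles those boundedly many pairs trivially for the $N^{-1/2}$ piece. For $r'\notin A_r$ one passes to the untruncated $\hat V_r$ (this is indeed where $N^{-\la/(\la+8)}$ enters) and then expands term by term: for fixed $n_1,m_1\in B_r$ and $n_2,m_2\in B_{r'}$, the condition $r'\notin A_r$ guarantees that every $X$-index coming from the $(n_1,m_1)$-pair differs by at least $l/\ell^2$ from every $X$-index coming from the $(n_2,m_2)$-pair. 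Ordering the union of these two index sets then partitions it into interleaved runs, each entirely in one of the two sets, with inter-run gaps $\ge l/\ell^2$. Lemma~\ref{lem4.1} is designed for exactly this interleaved-block structure and yields $|\mathrm{Cov}(\hat V_r,\hat V_{r'})|\le Cl^2\gam([l/(4\ell^2)])$, which summed over $\le N_l^2$ pairs produces the $N^{-(\min(\al,\la)-1)/2}$ term. The idea you are missing is that the decoupling must be carried out at the level of individual $Y$-quadruples, via the interleaved-block form of Lemma~\ref{lem4.1}, and not at the level of whole $V_r$'s via a single filtration gap.
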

\begin{proof}
For any $1\leq r\leq N_{l}$ set
\[
A_{r}=\{1\leq r'\leq N_{l}\,:\thinspace\min\{|sn-tn'|:\thinspace1\leq s,t
\leq \ell^{2},n\in B_{r},n'\in B_{r'}\}<l\}.
\]
If for some $1\leq t,s\leq \ell^{2}$, $sr'l\leq t(r-2)l$ or $s(r'-1)l
\geq t(r+1)l$
then for any $n\in B_{r}$ and $n'\in B_{r'}$ $|tn-sn'|\geq l$.
Hence, for any $r'\in A_{r}$, there exist $1\leq t,s\leq \ell^{2}$
satisfying $sr'l>t(r-2)l$ and $s(r'-1)l<t(r+1)l$. Therefore, $|r'-\frac{tr}{s}|
<\max(\frac{2t}{s},\frac{t}{s}+1)\leq2\ell$
and hence $|A_{r}|\leq 4\ell^{5}$. Next,
\begin{eqnarray*}
(||\sum_{r=1}^{N_{l}}U_{r}||_{2})^{2}=\sum_{r_{1}=1}^{N_{l}}E(
\sum_{r_{2}\in A_{r_{1}}}\prod_{k=1}^{2}U_{r_{k}})+
\sum_{r_{1}=1}^{N_{l}}E(\sum_{r_{2}\notin A_{r_{1}}}\prod_{k=1}^{2}
U_{r_{k}})=J_{1}+J_{2}.
\end{eqnarray*}
First, for any $1\leq r_{1},r_{2}\leq N_{l}$, by Lemma \ref{lem5.2}
and the Cauchy-Schwarz inequality, $|E(U_{r_{1}}U_{r_{2}})|
\leq||U_{r_{1}}||_{2}||U_{r_{2}}||_{2}\leq CN$
and hence,
\begin{eqnarray}\label{5.9}
|\frac{1}{N^{2}}J_{1}|\leq\frac{C}{N}\sum_{r=1}^{N_{l}}|A_{r}|\leq
\frac{4\ell^5C}{\sqrt{N}}=\frac{C_{0}}{\sqrt{N}}.
\end{eqnarray}

Next, let $1\leq r_{1},r_{2}\leq N_{l}$ and suppose that $r_{2}\notin
A_{r_{1}}.$
Since $r_{1}\notin A_{r_{2}}$ assume without loss of generality
that $r_{1}<r_{2}$. Now we estimate $|E(U_{r_{1}}U_{r_{2}})|$. First, by
 (\ref{5.1}) $Y_{i,n,2^{u}}=Y_{i,n}+\sum_{r=u}^{\infty}Y_{i,n,2^{r}}-
Y_{i,n,2^{r+1}}$. Thus, for
$s=1,2$ the Cauchy-Schwarz inequality, Lemma \ref{lem5.2} and (\ref{5.7})
imply that
\begin{eqnarray*}
&||V_{r_{s}}-\hat{V}_{r_{s}}||_{2}\leq||\sum_{n\in B_{r_{s}}}Y_{i,n}||_{4}
||\sum_{n\in B_{r_{s}}}(Y_{j,n}-Y_{j,n,2^{u}})||_{4}\\
&\leq||\sum_{n\in B_{r_{s}}}Y_{j,n,2^{u}}||_{4}||\sum_{n\in B_{r_{s}}}
(Y_{i,n}-Y_{i,n,2^{u}})||_{4}\leq C_{1}\sqrt N2^{-u(N)\la},
\end{eqnarray*}
where
\[
\hat{V}_{r}=(\sum_{n\in B_{r}}Y_{i,n})(\sum_{n\in B_{r}}Y_{j,n}).
\]
Hence, by the above, Lemma \ref{lem5.2} and the Cauchy-Schwarz inequality,
\begin{eqnarray}\label{5.10}
&||V_{r_{1}}V_{r_{2}}-\hat{V}_{r_{1}}\hat{V}_{r_{2}}||_{1}
\leq||V_{r_{1}}||_{2}||V_{r_{2}}-\hat{V}_{r_{2}}||_{2}+
||V_{r_{2}}||_{2}||\hat{V}_{r_{1}}-V_{r_{1}}||_{2} \\
&\leq C_{2}N2^{-u(N)\la}\leq C_{3}N^{1-\frac{\la}{\la+8}}.\nonumber
\end{eqnarray}
In view of (\ref{5.10}), it suffices to estimate $cov(\hat{V}_{r_{1}},
\hat{V}_{r_{2}})$.
We show that Lemma \ref{lem4.1} is applicable. For any $n_{s},m_{s}\in
B_{r_{s}},\thinspace s=1,2$
observe at $Y_{i,n_{1}}Y_{j,m_{1}}Y_{i,n_{2}}Y_{j,m_{2}}.$ Then it
vanishes unless  $i$ divides $n_{s}$ and $j$ divides $m_{s}$
for $s=1,2$ and we can write $n_{1}=in'_{1},n_{2}=in'_{2},m_{1}=jm'_{1},
m_{2}=jm'_{2}$.

Set
\[
\gam_1=(\{sn'_{1}\}_{s=1}^{i}\cup\{sm'_{1}\}_{s=1}^{j})\,\,\mbox{and}\,\,
\gam_2=(\{sn'_{2}\}_{s=1}^{i}\cup\{sm'_{2}\}_{s=1}^{j}).
\]
 By ordering the set $\gam_1\cup\gam_2$ and considering the jump points from
$\gam_{1}$
 to $\gam_{2}$ (or vice versa) we can represent this set as a
disjoint
union of blocks with distances between them at least $\frac{l}{\ell^{2}}$ and
which are contained in $\gam_{1}$ or in $\gam_{2}$. Applying
Lemma \ref{lem4.1} first with $Y_{i,n_{1}}Y_{j,m_{1}}Y_{i,n_{2}}Y_{j,m_{2}}$
and then with $Y_{i,n_{1}}Y_{j,m_{1}}$ and $Y_{i,n_{2}}Y_{j,m_{2}}$
separately yields
\[
|E(Y_{i,n_{1}}Y_{j,m_{1}}Y_{i,n_{2}}Y_{j,m_{2}})-E(Y_{i,in_{1}}Y_{j,m_{1}})
E(Y_{i,n_{2}}Y_{j,m_{2}})|\leq C_{4}\gam(\frac{l}{4\ell^2})
\]
where $\gam(n)=\vp_{q,p}(n)+(\be(q,n))^\del$.
Finally, by ($\ref{5.10}$), the fact that $l^{2}\gam(\frac{l}{4\ell^2})\leq
cl^{-(\min(\al,\la)-1)}$
and the above inequality we see that
\begin{equation}\label{5.11}
\frac{1}{N^{2}}|J_{2}|\leq C_5\left(N^{-\frac{\la}{\la+8}}+N^{-\frac{(\min(\al,
\la)-1}{2})}\right)
\end{equation}
and the lemma follows by (\ref{5.9})-(\ref{5.11}).
\end{proof}

\begin{lemma}\label{lem5.4}
Suppose that Assumption 2.1 holds with $\al,\la>1$. Let $N\in\bbN$ and $1\leq i,
j\leq\ell$. Then
\begin{eqnarray*}
||\frac{1}{N}\sum_{n=1}^{N}(Z_{n}-E(Z_{n}))||_{2}\leq CN^{-\frac{1}{4}
\min(\min(\al,\la)-1,\frac{\la}{\la+8})}
\end{eqnarray*}
where $C$ depends only on the initial parameters and $Z_n=Z_n^{(i,j,N)}$ is
defined in
 Proposition \ref{prop5.1}.
\end{lemma}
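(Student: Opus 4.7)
The plan is to relate the diagonal sum $\sum_{n=1}^N W_{i,n,2^{u(N)}}W_{j,n,2^{u(N)}}$ to the block-product $Y$-sum $\sum_r V_r$ of Lemma~\ref{lem5.3}, and then invoke that lemma as a black box. Set $L=[N^{1/2}]$, $N_L=[N/L]$, $B_r=\bbN\cap(L(r-1),Lr]$ for $r=1,\dots,N_L$, and abbreviate $W_{i,n}:=W_{i,n,2^{u(N)}}$, $Y_{i,n}:=Y_{i,n,2^{u(N)}}$. The at most $L$ leftover indices in $[1,N]\setminus\bigcup_r B_r$ contribute a remainder whose $L^2$-norm is $\leq CL(2^{u(N)})^2$, which after rescaling by $N^{-1}$ is $O(N^{-1/2+2/(\la+8)})$ and hence absorbed into the target.

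Within each block I would use the algebraic identity
\[
\sum_{n\in B_r}W_{i,n}W_{j,n} = \Bigl(\sum_{n\in B_r}W_{i,n}\Bigr)\Bigl(\sum_{n\in B_r}W_{j,n}\Bigr) - O_r, \qquad O_r := \sum_{\substack{n,m\in B_r\\ n\neq m}}W_{i,n}W_{j,m},
\]
and split $O_r$ according to whether $n<m$ or $n>m$: each summand takes the form $W_{i,n}\cdot S$ with $S$ measurable with respect to $\cF_{-\infty,(n-1)+2^{u(N)}}$. Because $W_{i,n}$ is a martingale difference for the filtration $\{\cF_{-\infty,n+2^{u(N)}}\}_n$, re-indexing across \emph{all} blocks in the natural order of $n$ turns $\sum_r O_r$ into a single martingale-difference sum in this global filtration. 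The bound $\|W_{i,n}\|_b\leq C\cdot 2^{u(N)}$ coming from (\ref{5.1.1})--(\ref{5.1.2}) and Lemma~\ref{Lem6.2} applied to the $W_{j,\cdot}$-martingale give $\|\sum_{m\in B_r,\,m<n}W_{j,m}\|_4\leq C\sqrt L\cdot 2^{u(N)}$, so each summand of $\sum_r O_r$ has $L^2$-norm at most $C\sqrt L\,(2^{u(N)})^2$; one further application of Lemma~\ref{Lem6.2} to the global sum yields $\|\sum_r O_r\|_2 \leq C\sqrt N\cdot\sqrt L\,(2^{u(N)})^2 = CN^{3/4+2/(\la+8)}$. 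Division by $N$ gives $CN^{-1/4+2/(\la+8)}=CN^{-\la/(4(\la+8))}$, precisely the $\la/(\la+8)$ branch of the target exponent.

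For the remaining block-product term I would use the telescoping $\sum_{n\in B_r}W_{i,n}=\sum_{n\in B_r}Y_{i,n}+R_{i,Lr,u(N)}-R_{i,L(r-1),u(N)}$ to write $\bigl(\sum_{n\in B_r}W_{i,n}\bigr)\bigl(\sum_{n\in B_r}W_{j,n}\bigr)=V_r+E_r$, where $E_r$ is the sum of the three cross-terms containing the boundary $R$-quantities. The norm bound $\|R_{i,v,u(N)}\|_b\leq C\cdot 2^{u(N)}$ from (\ref{5.1.1})--(\ref{5.1.2}), combined with the estimate $\|\sum_{n\in B_r}Y_{i,n}\|_4\leq C\sqrt L$ from Lemma~\ref{lem5.2}, yields $\|E_r\|_2\leq C\sqrt L\cdot 2^{u(N)}+C(2^{u(N)})^2$; summed over $r$ and rescaled by $N^{-1}$ this is well below the target. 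Finally, Lemma~\ref{lem5.3} controls $\|N^{-1}\sum_r(V_r-EV_r)\|_2$ directly, supplying the $\min(\al,\la)-1$ branch of the exponent, and the triangle inequality closes the argument. The hardest step is the off-diagonal estimate: the naive blockwise Burkholder bound on $O_r$ falls short by a factor $\sqrt{N_L}=N^{1/4}$, so it is essential to collapse $\sum_r O_r$ into one global martingale-difference sum in $\{\cF_{-\infty,n+2^{u(N)}}\}_n$ rather than bound it block by block.
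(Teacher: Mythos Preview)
Your proof is correct and follows the same architecture as the paper's: block the index set with $L=[\sqrt N]$, relate $\sum_n Z_n$ to the block products $G_r=(\sum_{B_r}W_{i,n,2^{u}})(\sum_{B_r}W_{j,n,2^{u}})$, pass from $G_r$ to $V_r$ via the telescoping boundary $R$-terms, and then invoke Lemma~\ref{lem5.3}. The one substantive difference is in how you handle the off-diagonal piece $O_r=G_r-\sum_{n\in B_r}Z_n$. You unfold $\sum_r O_r$ into an index-by-index martingale-difference sum in the fine filtration $\{\cF_{-\infty,n+2^{u(N)}}\}_n$ and apply orthogonality there. The paper instead observes that each $T_r=O_r$ is \emph{already} a martingale difference with respect to the coarser block filtration $\cG_r=\cF_{-\infty,rL+2^{u(N)}}$: for $n\neq m$ in $B_r$, conditioning first on $\cF_{-\infty,(\max(n,m)-1)+2^{u(N)}}\supset\cG_{r-1}$ kills $W_{i,n}W_{j,m}$, so $E[T_r\mid\cG_{r-1}]=0$, and $L^2$-orthogonality gives $\|\sum_r T_r\|_2^2=\sum_r\|T_r\|_2^2$ directly. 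Both routes produce the same $N^{3/4+2/(\la+8)}$ bound; the paper's block-level observation is a little shorter and shows that a ``block-by-block'' approach does work once one recognizes the martingale structure at that scale, but your index-level reorganization is equally valid.
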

\begin{proof}
Fix $N\in\bbN$ and let $l=[\sqrt{N}]$, $u=u(N)$. For any
$r>0$ set $\mathcal{G}_{r}=\cF_{-\infty,rl+2^{u}}$,
\begin{eqnarray*}
G_{r}=(\sum_{n\in B_{r}}W_{i,n,2^{u}})(\sum_{n\in B_{r}}W_{j,n,2^{u}})
\mbox{ and \ensuremath{T_{r}=G_{r}-\sum_{n\in B_{r}}Z_{n}}}
\end{eqnarray*}
where $B_r$ is defined in Lemma \ref{lem5.3}.
Clearly $\{T_{r}\}_{r=1}^{\infty}$ is a martingale differences sequence with
respect to the filtration $\{\mathcal{G}_{r}\}_{r=1}^{\infty}$. Then by
(\ref{5.1.1}), the
triangle inequality, the Cauchy-Schwarz inequality, Lemma \ref{lem5.2}
 and using the fact that $b\geq4$ and $2^{u}\leq\sqrt{l}$,
\begin{eqnarray}\label{5.12}
&\hskip1.cm||G_{r}-V_{r}||_{2}\leq||(\sum_{n\in B_{r}}Y_{i,n,2^{u}})
(\sum_{n\in B_{r}}Y_{j,n,2^{u}}-W_{j,n,2^{u}})||_{2}+\\
&\hskip1.cm||(\sum_{n\in B_{r}}W_{j,n,2^{u}})(\sum_{n\in B_{r}}
Y_{i,n,2^{u}}-W_{i,n,2^{u}})||_{2}\leq C_{1}\sqrt{l}2^{u}\leq
C_{2}N^{\frac{1}{4}+\frac{1}{\la+8}}\nonumber
\end{eqnarray}
and hence
\[
||\sum_{r=1}^{N_{l}}(G_{r}-V_{r})||_{2}\leq C_{2}N^{\frac{3}{4}+\frac{1}
{\la+8}}.
\]
By Lemma \ref{lem5.2} and (\ref{5.12}),
\[
||G_{r}||_{2}\leq||V_{r}||_{2}+C_{3}\sqrt{l}2^{u}\leq C_{4}N^{\frac{1}{2}}.
\]
By (\ref{5.1.2}), $||Z_{n}||_{2}\le C_{5}2^{2u}.$ Thus, $||T_{r}||_{2}\leq
2^{2u}
C_{6}N^{\frac12}$.
Therefore, by the martingale orthogonality property and since $N-lN_{l}\leq l,$
\begin{eqnarray}\label{5.13}
&||\sum_{n=1}^{N}Z_{n}-\sum_{r=1}^{N_{l}}G_{r}||_{2} & \leq C_{5}l2^{2u}+
||\sum_{r=1}^{N_{l}}T_{r}||_{2}\\
&=C_{5}l2^{2u}+(\sum_{r=1}^{N_{l}}||T_{r}||_{2}^{2})^{\frac{1}{2}} &
\leq C_{7}N^{\frac{3}{4}+\frac{2}{\la+8}}.
\nonumber
\end{eqnarray}
The lemma follows by (\ref{5.12}) and (\ref{5.13}), writing
$\sum_{n=1}^{N}Z_{n}$ as sum of those two differences and then applying
Lemma \ref{lem5.3}.
\end{proof}
\begin{proof}[\textbf{Proof of Proposition \ref{prop5.1}}]
First write
\begin{eqnarray*}
\sum_{n=1}^{iN}E(Z_{n})=E((\sum_{n=1}^{iN}W_{i,n,2^{u}})(\sum_{n=1}^{iN}
W_{j,n,2^{u}})).
\end{eqnarray*}
By the same reason as in (\ref{5.12})
and since $\frac{2^{u(N)}}{\sqrt{N}}\leq2N^{-\frac{\la+6}{2(\la+8)}}\leq2
N^{-\frac{
\la}
{2(\la+8)}}$,
\begin{equation*}
|\frac{1}{N}\sum_{n=1}^{iN}E(Z_{n})-E(\xi_{i,N}^{(u)}(i)\xi_{j,N}^{(u)}(i))|
  \leq C_{1}N^{-\frac{\la}{2(\la+8)}},\,\, u=u(N).
\end{equation*}
By (\ref{5.2}), Lemma \ref{lem5.2} and the Cauchy-Schwarz inequality,
\begin{equation*}
||\xi_{i,N}^{(u)}(i)\xi_{j,N}^{(u)}(i)-\xi_{i,N}(i)\xi_{j,N}(i)||_{2}\leq
 C_{2}2^{-u\la}\leq C_{3}N^{-\frac{\la}{2(\la+8)}}.
\end{equation*}
As in \cite{Ki2}, $|E(\xi_{i,N}(i)\xi_{j,N}(i))-iD_{i,j}|\leq C_{4}$
 for some constant which depends on the initial parameters and on
the expressions (\ref{2.8}) and (\ref{2.9}). Thus,
\begin{equation}\label{5.14}
||\frac{1}{N}\sum_{n=1}^{iN}Z_{n}-iD_{i,j}||_{2}\leq\frac{1}{N}||\sum_{n=1}^{iN}
Z_{n}-E(Z_{n})||_{2}+C_{5}N^{-\frac{\la}{2(\la+8)}}.
\end{equation}
Now we estimate the first term on the right hand side of (\ref{5.14})
by Lemma \ref{lem5.4} applied with $iN$ and the lemma follows.
\end{proof}

\subsection{Proof of Theorem \ref{thm2.4}}\label{subsec5.3}

Let $M_n=M_{n}^{(N)},\, n=1,...,\ell N$ and $W_n=W_n^{(N)},\, n=1,...,\ell N$
be the martingale array and the corresponding martingale differences from
Section \ref{subsec5.1}. Let $\sig^2>0$ be the limiting variance. Applying
Theorem 2 from \cite{HH}  with $\del=1$, and $\ve=\sig^{-\frac{8}{5}}||
N^{-1}\sum_{n=1}^{N\ell} W_{n}^{2}-\sig^{2}||_{2}^{\frac{4}{5}}$, taking into
account the Markov inequality and that always $EZ^{2}\bbI_{\{|Z|>1\}}\leq
EZ^{4}$  yields
\begin{eqnarray}\label{5.15}
&d_{K}(\cL(N^{-1/2}M_{\ell N}),\,\cN(0,\sig^2))=  d_{K}(\cL(N^{-1/2}\sig^{-1}
M_{\ell N}),
\,\cN(0,1))\\
&\leq A(N^{-2/3}\sig^{-4/3}V_{4,N}^{\frac{1}{3}} +N^{-2/5}\sig^{-\frac{4}{5}}
V_{4,N}^{\frac{1}{5}}+\sig^{-\frac{4}{5}}\|\frac 1N\sum_{n=1}^{N\ell}
W_{n}^{2}-\sig^{2}||_{2}^{\frac{2}{5}})\nonumber
\end{eqnarray}
where $A>0$ is an absolute constant and $V_{4,N}=\sum_{n=1}^{N\ell} EW_{n}^{4}$.

Next, by (\ref{5.1.1}), (\ref{5.1.2}) and the formulas for $W_{i,n,2^u},\,
W_{i,n}^{(N)}$ and $W_n^{(N)}$ we obtain that $||W^{(N)}_{n}||_{b}\leq
\bar{C}2^{u(N)}$ for some constant $\bar C>0$ independent of $N$,
 and so $V_{4,N}\leq N\bar C^42^{4u(N)}$.
Finally, (\ref{5.3}) and (\ref{5.15}) together with  Proposition \ref{prop5.1}
 and Lemma \ref{lem3.3} yields the first assertion of Theorem \ref{thm2.4}.
In order to prove the second assertion we
take $u(N)=[\frac{\log_{2}(\log_{2}(N))}{\la+8}]$ in place of
$[\frac{\log_{2}(N)}{\la+8}]$
and repeat the the proof of the first assertion, with  appropriate
modifications, using the fact that  with $c_{\del}=c^{\del}$,
$n^{2}(\varpi_{q,p}(n)+\be^{\del}(q,n))\leq M\te^{n}$ for
some $1>\te>c_{\del}$ and $\sum_{n=2^{r}}^{\infty}(\be(q,n))^{\del}\leq
r^\del\sum_{n=2^{r}}^{\infty}c_{\del}^{n}=r^\del\frac{c_\del^{2^r}}{1-c_\del}.$
 \qed

\section{Special cases, extensions and concluding remarks}\label{sec6}
\setcounter{equation}{0}

In this section we provide better estimates for the i.i.d. case, extend
results to more general $q_j(n)$ functions and consider also the continuous
time case.

\subsection{Independent case }\label{subsec6.1}
When $\{X(n)\}_{n\geq1}$ are i.i.d. random variables
we do not assume (\ref{2.4}) and (\ref{2.5}) but only that $F(X(1),X(2),...,
X(\ell))$ is a nonconstant random variable having third moment. The
case $\ell=1$ is the "conventional''  case, so we assume that $\ell>1$
and set
\[
S_N=\sum_{n=1}^N F(X(n),X(2n),...,X(n\ell)).
\]
As in Section 2 from \cite{KV2} $S_{N}$ can be splitted  into
sum of independent (blocks) random variables as follows. Let
$l_1,l_2...,l_m\geq 2$ be all primes not exceeding $\ell$. Set
\[
A_n=\{1\leq a\leq n  : a \mbox{ is relatively prime with }  l_1,l_2...
,l_m\}
\]
and
\[
B_s(a)=\{b\leq s:b=al_1^{d_1}\cdot l_2^{d_2}\cdots
l_m^{d_m}\mbox{ for some  nonnegative integers }
d_1,d_2...,d_m\}.
\]
For any $a\in A_N$ put
\[
S_{N,a}=\sum_{b\in B_N(a)}F(X(b),X(2b),...,X(\ell b)).
\]
Then, the distribution of $S_{n,a}$ depends only on $|B_N(a)|$ where $|B|$
denotes here the cardinality of $B$.
Observe that $\{S_{N,a}\}_{a\in A_{N}}$ are independent random variables
and that $S_N=\sum_{n\in A_N}S_{N,n}$.
\begin{assumption}\label{ass6.1}
 Suppose that
\begin{eqnarray*}
&EF(X(1),X(2),...,X(\ell))=0,\,\, 0<d^2=EF^2(X(1),X(2),...
,X(\ell))\\
&\mbox{and}\,\, r^3=E|F^3(X(1),X(2),...,X(\ell))|<\infty.
\end{eqnarray*}
\end{assumption}
\begin{theorem}\label{thm6.2}
Suppose that Assumption 6.1  holds true. Then $\sig^2=\lim_{N\to
\infty}\frac{1}{N}\mbox{Var}(S_N)$ exits and satisfies
\begin{equation}\label{6.0}
\frac 12\big(1-\prod_{k=1}^m(1-\frac 1{l_k})\big)d^2\leq \sig^2\leq\ell^2d^2
\end{equation}
and for all $N\geq 1$,
\begin{equation}\label{6.0+}
d_K(\frac{S_N}{\sqrt N},\cN(0,\sig))\leq C\frac{(1+\log_2 N)^{3m}\max(r^3,1)}
{d\min(d^2,1)\sqrt N}
\end{equation}
 where $C>0$ depends only on $\ell$ (and this dependence can be recovered
 explicitly from the proof below).
\end{theorem}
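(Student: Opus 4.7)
The plan is to represent $S_N$ as a sum of the independent random variables $\{S_{N,a}\}_{a\in A_N}$ introduced just before the theorem statement, and then apply the classical Berry--Esseen inequality (\ref{1.1}) in its non-i.i.d. form. For the independence, each $n\leq N$ factors uniquely as $n=as$ with $a\in A_N$ rough and $s=l_1^{d_1}\cdots l_m^{d_m}$ smooth; for $1\leq j\leq\ell$ the product $js$ remains smooth, so the index sets $\{jb:b\in B_N(a),\,1\leq j\leq\ell\}$ are pairwise disjoint in $\bbN$ as $a$ varies. The time-change $Y_k:=X(ak)$ yields the distributional identity $S_{N,a}\stackrel{d}{=} T_{[N/a]}$, where
\begin{equation*}
T_M:=\sum_{s\text{ smooth},\,s\leq M}F(X(s),X(2s),\ldots,X(\ell s)),
\end{equation*}
and the count of smooth integers satisfies $\pi_m(M):=|\{s\text{ smooth}:s\leq M\}|\leq C_\ell(1+\log_2 M)^m$.

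For existence and bounds on $\sig^2$, I would group pairs $(n,n')$ with nonzero covariance by their ratio $n'/n=i/j$ with $\gcd(i,j)=1$ and $1\leq i,j\leq\ell$. By the i.i.d.\ structure, $\mbox{Cov}(F_n,F_{n'})=c(i,j)$ depends only on the pair, whence
\begin{equation*}
\mbox{Var}(S_N)=\sum_{\substack{(i,j):\,\gcd(i,j)=1\\1\leq i,j\leq\ell}}c(i,j)\lfloor N/\max(i,j)\rfloor.
\end{equation*}
This shows $\sig^2=\sum c(i,j)/\max(i,j)$ exists; the upper bound $\sig^2\leq\ell^2 d^2$ follows from $|c(i,j)|\leq d^2$. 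For the lower bound I would use the block identity $\mbox{Var}(S_N)=\sum_a \mbox{Var}(T_{[N/a]})$: within each block $S_{N,a}$ the variable $X(a)$ appears in exactly one summand (namely $F_a$, as its first argument), so conditioning on the remaining $X(k)$'s and using ANOVA gives $\mbox{Var}(S_{N,a})\geq E[\mbox{Var}(F_a\mid X(k),\,k\neq a)]$. The blocks with $[N/a]=1$ (numbering $\sim(N/2)\prod_k(1-1/l_k)$) each contribute $d^2$, and combining with contributions from larger blocks yields the stated lower bound.

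For Berry--Esseen, apply the classical bound for independent sums to obtain
\begin{equation*}
d_K\bigl(S_N/\sqrt{\mbox{Var}(S_N)},\,\cN(0,1)\bigr)\leq C\sum_{a\in A_N}E|S_{N,a}|^3\big/\mbox{Var}(S_N)^{3/2}.
\end{equation*}
By Minkowski, $\|S_{N,a}\|_3\leq\pi_m([N/a])\,r$, so $\sum_a E|S_{N,a}|^3\leq r^3\sum_a\pi_m([N/a])^3\leq C_\ell r^3 N(1+\log_2 N)^{3m}$. Together with $\mbox{Var}(S_N)\geq cNd^2$, this produces a rate of order $(1+\log_2 N)^{3m}r^3/(d^3\sqrt N)$; invoking Lemma~\ref{lem3.3} with the Gaussian $\cN(0,\sig^2)$ (whose density is bounded by $(\sig\sqrt{2\pi})^{-1}$) to switch the normalization from $\sqrt{\mbox{Var}(S_N)}$ to $\sqrt N\,\sig$ yields the form stated in the theorem, with the $\min(d^2,1)$ factor arising from the density bound when $\sig$ is small. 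The main obstacle will be matching the constant $(1-\prod_k(1-1/l_k))/2$ in the lower bound on $\sig^2$: a direct count of blocks with $[N/a]=1$ alone only gives $\sig^2\geq(\prod_k(1-1/l_k)/2)d^2$, and closing the gap requires combining this with the ANOVA contributions from blocks of larger size.
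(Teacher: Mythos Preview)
Your overall strategy matches the paper's: decompose $S_N=\sum_{a\in A_N}S_{N,a}$ into independent blocks, apply the classical Berry--Esseen bound for independent (non-identically distributed) summands, bound $E|S_{N,a}|^3$ via $|B_N(a)|\leq(1+\log_2 N)^m$, and use Lemma~\ref{lem3.3} to pass from normalization by $\sqrt{\mathrm{Var}(S_N)}$ to normalization by $\sig\sqrt N$. The paper does exactly this.

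Your route to the \emph{existence} of $\sig^2$ is different and more elementary. The paper writes $\mathrm{Var}(S_N)=\sum_l|A_N^{(l)}|R_l$ with $R_l=\mathrm{Var}(T_l)$, imports from \cite{KV2} the asymptotics $|A_N^{(l)}|/N\to c_\ell(e^{-\rho_{\min}(l)}-e^{-\rho_{\max}(l)})$, and sums the resulting series. Your direct formula $\mathrm{Var}(S_N)=\sum_{(i,j)}c(i,j)\lfloor N/\max(i,j)\rfloor$ sidesteps all of that machinery and gives both the limit and the upper bound $\sig^2\leq\ell^2d^2$ in one line; this is a genuine simplification.

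On the lower bound: the paper does \emph{not} use any ANOVA idea. It uses only the singleton-block argument you describe, bounding $\mathrm{Var}(S_N)\geq|A_N\cap(N/2,N]|\,d^2$, and then asserts $\lim_{N\to\infty}N^{-1}|A_N\cap(N/2,N]|=\tfrac12 c_\ell$ with $c_\ell=1-\prod_k(1-1/l_k)$. But the density of integers in $(N/2,N]$ coprime to $l_1,\ldots,l_m$ is $\tfrac12\prod_k(1-1/l_k)$, which is what you computed. The discrepancy you flagged is real: the paper's inclusion--exclusion formula for $|\hat A_N^{(l)}|$ (imported from \cite{KV2}) starts the sum at $k=1$, so it is counting integers divisible by at least one $l_i$, not those coprime to all of them, and the resulting identification $\lim N^{-1}|A_N|=c_\ell$ is inconsistent with the definition of $A_N$. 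So the constant in (\ref{6.0}) should read $\tfrac12\prod_k(1-1/l_k)$ rather than $\tfrac12\big(1-\prod_k(1-1/l_k)\big)$; your singleton count already achieves that, and no supplementary ANOVA argument is needed. (Your proposed ANOVA bound $\mathrm{Var}(S_{N,a})\geq E[\mathrm{Var}(F_a\mid X(k),k\neq a)]$ would in any case only contribute the partial variance of $F$ in its first coordinate, which can vanish even when $d^2>0$.)
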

\begin{proof}
We will use the construction and techniques from Section 4 in \cite{KV2}.
Put $Z_{N,a}=ES_{n,a}^2$, $Z_N=ES_N^2=\sum_{a\in A_n}Z_{N,a}$,
\[
D(\rho)=\{n=(n_1,..,n_m)\in\bbZ^m: n_i\geq0,i=1,..,m\,\, \mbox{ and }
 \sum_{i=1}^m n_{i}\ln(l_i)\leq\rho\}.
\]
Similarly to Section 4 of \cite{KV2} we conclude that $Z_{N,a}$ is determined
only by $|B_N(a)|$ (where $|\Gam|$ for a finite set $\Gam$ denotes its
cardinality), and so we can set $R_l=Z_{N,a}$ if $l=|B_N(a)|$.
 Observe that $R_1=EF^2(X(1),...,X(\ell))=d^2>0$ and that
\begin{equation}\label{6.1}
R_l\leq l^2 EF^2(X(1),...,X(\ell))=l^2d^2
\end{equation}
in view of the inequality $(\sum_{1\leq i\leq l}a_i)^2\leq l\sum_{1\leq i\leq l}
a^2_i$. In Section 4 of \cite{KV2} it was shown that the numbers
\[
\rho_{\max}(l)=\sup\{\rho\geq0:|D(\rho)|=l\}\,\,\mbox{and}\,\, \rho_{\min}(l)=
\inf\{\rho\geq0:|D(\rho)|=l\}
\]
are well defined and satisfy
\begin{equation}\label{6.2}
(l^{\frac{1}{m}}-1)\ln 2<\rho_{\min}(l)<\rho_{\max}(l).
\end{equation}

Next, set
\[
A_N^{(l)}=\{a\in A_N:|B_N(a)|=l\}
\]
and
\[
\hat A_N^{(l)}=\{a\in A_N:Ne^{-\rho_{\max}(l)}\leq a\leq Ne^{-
\rho_{\min}(l)}\}.
\]
In (4.6) and (4.7) from \cite{KV2} it was shown that
\begin{equation}\label{6.3}
|A_N^{(l)}|\leq N2^{-(l^{\frac{1}{m}}-1)}\,\,\mbox{and}\,\,
\frac{1}{N}||A_N^{(l)}|-|\hat A_N^{(l)}||\leq\frac{1}{N}.
\end{equation}
As in (4.10) from \cite{KV2} with $|G_N^{(l)}(n)|=[\frac Nn(e^{-
\rho_{\min}(l)}-e^{-\rho_{\max}(l)})]$,
\[
|\hat A_N^{(l)}|=\sum_{k=1}^m(-1)^{k+1}\sum_{i_1<i_2<...<i_k
\leq m}|G_N^{(l)}(\prod_{s=1}^kl_{i_s})|
\]
and hence
\[
|\frac{1}{N}|\hat A_N^{(l)}|-c_{\ell}(e^{-\rho_{\min}(l)}
-e^{-\rho_{\max}(l)})|\leq\frac{m^m}{N}
\]
where
\[
c_\ell=1-\prod_{k=1}^m(1-\frac{1}{l_k})=\sum_{k=1}^m(-1)^{k+1}
\sum_{i_1<i_2<...<i_k\leq m}\prod_{s=1}^k\frac{1}{l_{i_s}}.
\]
Therefore in view of (\ref{6.3}),
\begin{equation}\label{6.4}
|\frac{1}{N}|A_N^{(l)}|-c_\ell(e^{-\rho_{\min}(l)}-e^{-\rho_{\max}(l)})|
\leq\frac{m^{m}+1}{N}.
\end{equation}
By (4.3) in \cite{KV2},
\begin{equation}\label{6.4+}
|B_N(a)|\leq (1+\frac 1{\ln 2}\ln\frac Na)^m=(1+\log_2\frac Na)^m.
\end{equation}
It follows that
\begin{eqnarray}\label{6.4++}
&\frac1NZ_n=\frac1N\sum_{a\in A_N}Z_{N,a}=\frac1N\sum_{1\leq l
\leq(1+\log_2 N)^m}|A_N^{(l)}|R_l\\
&\underset{N\to\infty}\longrightarrow c_\ell\sum_{l=1}^\infty
(e^{-\rho_{\min}(l)}-e^{-\rho_{\max}(l)})R_l=\sig^2\nonumber
\end{eqnarray}
where the last series converges absolutely in view of (\ref{6.1}) and
(\ref{6.2}).

Next, we will need two following inequalities
\begin{equation}\label{6.5}
|\frac1N\mbox{Var}(S_N)-\sig^{2}|\leq\frac CN(1+\log_2 N)^{3m},
\end{equation}
where $C>0$ depends only on $\ell$, and
\begin{equation}\label{6.6}
c d^2N\leq \mbox{Var}(S_{N})\leq \ell^2 d^2N
\end{equation}
 where $d$ is from Assumption \ref{ass6.1}, $c>0$ depends only on $\ell$
 and we claim the left hand side of (\ref{6.6}) only for
 $N\geq 2\prod_{i=1}^ml_i$.
  Indeed, by (\ref{6.1})-(\ref{6.4}) and by the last equality in (\ref{6.4++}),
\begin{eqnarray}\label{6.6+}
&|\frac1N\sum_{1\leq l\leq(1+\log_2 N)^m}|A_N^{(l)}|R_l
-\sig^2|\\
&\leq d^2\big (\sum_{l>(1+\log_2 N)^m}c_\ell l^2(e^{-\rho_{\min}(l)}
-e^{-\rho_{\max}(l)})+\frac{m^m+1}{N}\sum_{1\leq l\leq(1+
\log_2 N)^m}l^2\big )\nonumber\\
&\leq d^2C\big(\sum_{l>(1+\log_2 N)^m}l^22^{-l^{1/m}}
+\frac{(1+\log_2 N)^{3m}}N\big )\nonumber
\end{eqnarray}
where $C>0$ depends only on $\ell$. Next,
\begin{eqnarray*}
&\sum_{l>(1+\log_2 N)^m}l^22^{-l^{1/m}}=\sum_{j=1}^\infty
\sum_{(j+1)^m(1+\log_2 N)^m\geq l>j^m(1+\log_2 N)^m}
l^22^{-l^{1/m}}\\
&\leq\frac 1N(1+\log_2 N)^{3m}\sum_{j=1}^\infty
(j+1)^{3m}2^{-j}
\end{eqnarray*}
 which together with (\ref{6.6+}) yields (\ref{6.5}).
In order to obtain (\ref{6.6}), observe that $|B_N(a)|=1$ for any
$a\in A_N$ with $a>\frac{N}2$, and so then Var$(S_{N,a})=d^2$. Thus,
\[
\mbox{Var}(S_N)\geq |A_N\cap(\frac N2,N]|d^2.
\]
From the definition of $\rho_{\max}$ and $\rho_{\min}$ it follows that
$\rho_{\max}(l)=\rho_{\min}(l+1)$ which together with (\ref{6.4}) yields
\begin{eqnarray}\label{6.6++}
&\lim_{N\to\infty}\frac{1}N|A_N|=\lim_{N\to\infty}\frac{1}N\sum_{l=1}^\infty
|A_N^{(l)}|\\
&=c_\ell\sum_{l=1}^\infty(e^{-\rho_{\min}(l)}-e^{-\rho_{\max}(l)})
=c_\ell e^{-\rho_{\min}(1)}=c_\ell.\nonumber
\end{eqnarray}
Hence, $\lim_{N\to\infty}\frac{1}N|A_N\cap(\frac N2,N]|=\frac 12c_\ell$
implying the left hand side of (\ref{6.0}).
Observe, in addition, that $|A_N\cap(\frac N2,N]|>0$ whenever $N\geq
2\prod_{i=1}^ml_i$ since then there exists $n\geq N/2$, $n<N$ which is
 divisible by $\prod_{i=1}^ml_i$, and so $a=n+1\leq N$ is relatively
 prime with $l_1,...,l_m$. These yield the left hand side of (\ref{6.6}).
  Now, notice that
 for a given $n\in\bbN$  there are at most $\ell^2$ $m$'s such that
\[
E(F(X(n),...,X(\ell n))F(X(m),...,X(\ell m))\neq 0
\]
while for any $n,m\in\bbN$ the Cauchy- Schwarz inequality implies that
\[
|E(F(X(n),...,X(\ell n))F(X(m),...,X(\ell m))|\leq d^2
\]
and the right hand sides of (\ref{6.6}) and of (\ref{6.0}) follow.

Next, since $|x^{-\frac12}-y^{-\frac12}|=(xy)^{-\frac12}
(x^{\frac12}+y^{\frac12})^{-1}|x-y|$, (\ref{6.5}) and  (\ref{6.6})
 imply that for $N\geq 2\prod_{i=1}^ml_i$,
\[
|\frac1{\sqrt{\mbox{Var}(S_N)}}-\frac1{\sqrt {N\sig^2}}|\leq\frac1{cd^2\sig}
N^{-\frac32}C(1+\log_2 N)^{3m}.
\]
By the left hand side of (\ref{6.6}), $\sig^2\geq cd^2$ which together with
the right hand side of (\ref{6.6}) yields from here that
\begin{equation}\label{6.7}
||\frac{S_N}{\sqrt{N\sig^2}}-\frac{S_N}{\sqrt{\mbox{Var}(S_N)}}||_2\leq
\frac1{c^{3/2}d^2} N^{-1}C\ell (1+\log_2 N)^{3m}.
\end{equation}

Next, using (\ref{6.4+}) and the inequality $(\sum_{i=1}^ka_i)^3\leq k^2
\sum_{i=1}^k|a_i|^3$ we obtain
\[
E|S_{N,a}|^3\leq (1+\log_2 N)^{3m}E|F^3(X(1),...,X(\ell))|=(1+\log_2 N)^{3m}r^3.
\]
Now, in order to prove the last assertion of Theorem \ref{thm6.2} we apply
the assertion 4.1.b from Chapter 4 of \cite{Bai} which yields together
with lower bound from  (\ref{6.6}) that for some absolute constant $C>0$
and all $N\geq 2\prod_{i=1}^ml_i$,
\begin{equation}\label{6.7+}
d_K(\frac{S_N}{\sqrt{\mbox{Var}(S_N)}},\cN(0,1))\leq C(\mbox{Var}
(S_N))^{-
\frac32}\sum_{a\in A_N}E|S_{N,a}|^3\leq\tilde Cr^3\frac{(1+\log_2 N)^{3m}}
{c^{\frac32}N^{\frac12}d^3}
\end{equation}
where $\tilde C>0$ depends only on $\ell$.
 Now, (\ref{6.0+}) follows from (\ref{6.7}), (\ref{6.7+}),  Lemma 3.3
  applied with $a=1$ and  the fact that
\[
d_K(\frac{S_N}{\sqrt N},\cN(0,\sig^2))=d_K(\frac{S_N}{\sig\sqrt N},
\cN(0,1)).
\]
Observe that though we claim (\ref{6.7+}) only for $N\geq 2\prod_{i=1}^ml_i$
the estimate (\ref{6.0+}) holds true for all $N\geq 1$ with some constant $C$
depending only on $\ell$ since by Jensen's inequality $r^2\geq d^2$, and so
 $\max(r^3,1)\geq d\min(1,d^2)$ which enables us to choose $C$ so that
 (\ref{6.0+}) is satisfied also for $1\leq N< 2\prod_{i=1}^ml_i$.
\end{proof}

\subsection{Nonlinear functions $q_{j}$}\label{sec6.2}
Here we discuss the case $k<\ell$, where recall, $q_{j}(n)=jn$ for $j=1,...,k$
and
$q_{j}(n+1)-q_{j}(n)$ and $q_{j}(\ve n)-q_{j-1}(n)$ tend to $\infty$ as $n\to
\infty$ whenever
$\ell\geq j>k$ and $\ve>0$. First, observe that we can exclude the case when
$F(x_1,...,x_\ell)=G(x_1,...,x_k)$ for some Borel function $G$ and $\mu^\ell=
\mu\times\cdots\times\mu$
almost all $(x_1,...,x_\ell)$ since then we arrive at the setup of Theorem
\ref{thm2.4}.
The above equality means that $F$ does not depend essentially on the variables
$x_{k+1},...,x_\ell$
and this is equivalent to the condition that
\begin{equation}\label{6.8}
F_{i}=0\thickspace  \mu^i-\mbox{almost surely (a.s.) for all } i=k+1,...,\ell.
\end{equation}
By Proposition 4.5 in \cite{KV1}, for any $i>k$,
\[
D_{i,i}=\int F_i^2(x_{1},...,x_{i})d\mu^i(x_1,...,x_i),
\]
and so if the above case is excluded then $D_{i,i}>0$ for at least one $i>k$.
This together with Theorem
\ref{thm2.2} yields that then $\sig^2>0$ whence this question is settled here
and it remains to deal only with Berry-Esseen type estimates.

\begin{theorem}\label{thm6.3}
Let $k<\ell$. Suppose that Assumption \ref{ass2.1} is satisfied with some $\al,
\la>1$
and $b\geq4$   and that there exists $1>\gam>0$ such that
$q_{i}([n^\gam])\geq q_{i-1}(n)$ and $q_{i}(n+1)-q_{i}(n)\geq n^{\gam}$  for
any
 $k<i\leq\ell$ and $n\in\bbN$.  Assume that (\ref{6.8}) does not hold true.
Then for any $N\in\bbN$,
\[
d_k(\cL(\xi_N(1)),\cN(0,\sig^2))\leq CRN^{-\frac2{13}\te(\gam,\al,\la)}
\]
where $\te(\gam,\al,\la)=\min(\frac12(1-\gam),\frac{\min(\al,\la)-1}4,\frac{
\gam\min(\al,\la)}{2+\gam\min(\al,\la)}, \frac\la{4(\la+4)})$,  $D_{i,j}$,
$1\leq i,j\leq\ell$ were introduced in Theorem \ref{thm2.2}$,
D_{0,0}=\sig_0=\sum_{1\leq i,j\leq k}\min(i,j)D_{i,j}$,
\[
R=1+(\max_{i\in\{0,k+1,...,\ell\}: D_{i,i}>0}D_{i,i}^{-1})(\max_{k<j\leq\ell\ : D_{j,j}>0}\max(D_{j,j}^
{-\frac43},D_{j,j}^{-\frac45}))
\]
and $C>0$ depends only on the initial parameters and the expressions
(\ref{2.8})-(\ref{2.9}).
\end{theorem}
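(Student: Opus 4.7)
The plan is built on the decomposition (\ref{2.15}), writing
$$\xi_N(1) = \zeta_{0,N} + \sum_{i=k+1}^\ell \zeta_{i,N},$$
where $\zeta_{0,N} := \sum_{i=1}^k \xi_{i,N}(i) = N^{-1/2}\sum_{n=1}^N\tilde F(X(n),\dots,X(kn))$ with $\tilde F := \sum_{i=1}^k F_i$, and $\zeta_{i,N} := \xi_{i,N}(1)$ for $i>k$. By Theorem \ref{thm2.2} and the vanishing of $D_{i,j}$ when $\max(i,j)>k$ and $i\ne j$, the asymptotic covariance of $(\zeta_{0,N},\zeta_{k+1,N},\dots,\zeta_{\ell,N})$ is diagonal with entries $D_{0,0}=\sigma_0^2, D_{k+1,k+1},\dots,D_{\ell,\ell}$, so $\sigma^2=\sum_j D_{j,j}$. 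Since (\ref{6.8}) fails by assumption, at least one $D_{i,i}$ with $i>k$ is strictly positive, so $\sigma^2>0$.

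First I would apply Theorem \ref{thm2.4} directly to $\zeta_{0,N}$, which is a $k{=}\ell$ nonconventional sum driven by $\tilde F$, obtaining
$$d_K(\mathcal{L}(\zeta_{0,N}),\mathcal{N}(0,D_{0,0}))\le C A(\sqrt{D_{0,0}}) N^{-\zeta(\alpha,\lambda)}$$
provided $D_{0,0}>0$; if $D_{0,0}=0$, then Theorem \ref{thm2.3} gives $\|\zeta_{0,N}\|_2 \le C N^{-1/2}\ln N$ and this summand is negligible.

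Next, for each $i>k$ with $D_{i,i}>0$ I would build a martingale-array approximation of $\zeta_{i,N}$ in the spirit of Section \ref{subsec5.1}, but adapted to the filtration $\{\mathcal{F}_{-\infty,q_i(n)+2^{u(N)}}\}_n$. The hypotheses $q_i(n+1)-q_i(n)\ge n^\gamma$ and $q_i([n^\gamma])\ge q_{i-1}(n)$ play here the role that linear spacing played in Section \ref{sec5}: partitioning $\{1,\dots,N\}$ into blocks of length roughly $N^\gamma$ guarantees that $q_i$-images of indices in different blocks are separated by at least $N^\gamma$, so Lemma \ref{lem4.1} yields quantitative decorrelation. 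Reproducing the argument of Lemmas \ref{lem5.2}--\ref{lem5.4} and Proposition \ref{prop5.1} in this setting gives a quadratic-variation estimate $\|N^{-1}\sum_{n=1}^N (W_n^{(i,N)})^2-D_{i,i}\|_2 \le C N^{-\theta_i}$, where the exponent is the minimum of four contributions: $\tfrac12(1-\gamma)$ from the block count, $\tfrac14(\min(\alpha,\lambda)-1)$ from the mixing summability, $\tfrac{\gamma\min(\alpha,\lambda)}{2+\gamma\min(\alpha,\lambda)}$ from the gap-growth exponent, and $\tfrac{\lambda}{4(\lambda+4)}$ from the truncation level $u(N)$. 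Theorem~2 of \cite{HH} then produces $d_K(\mathcal{L}(\zeta_{i,N}),\mathcal{N}(0,D_{i,i}))\le C A(\sqrt{D_{i,i}}) N^{-\theta_i'}$.

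The remaining step is to combine the per-component Gaussian approximations into a Berry-Esseen bound for $\xi_N(1)$. To this end I would estimate the joint characteristic function and show that, uniformly for $(t_0,t_{k+1},\dots,t_\ell)$ in a window of width $N^{\gamma''}$,
$$\Bigl| E\exp\bigl(i\textstyle\sum_{j} t_j\zeta_{j,N}\bigr) - \prod_{j} E\exp(it_j\zeta_{j,N})\Bigr|\le CN^{-\rho}$$
for some $\rho>0$. This uses a multi-block analogue of Lemma \ref{lem4.1} applied to $(x)\mapsto \exp\bigl(i\sum_j t_j F_j(\cdot)\bigr)$, exploiting that the index sets $\{q_j(n):n\le N\}$ for distinct $j\in\{k+1,\dots,\ell\}$ and for the linear block $\{1,2,\dots,kN\}$ are eventually separated by at least $N^\gamma$ by the hypothesis $q_j([n^\gamma])\ge q_{j-1}(n)$. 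Feeding the factorization together with the individual Berry-Esseen bounds into Esseen's smoothing inequality yields the claimed estimate, the constant $R$ absorbing the individual $A(\sqrt{D_{j,j}})$ factors (whose powers $D_{j,j}^{-4/3},D_{j,j}^{-4/5}$ match those entering the martingale Berry-Esseen bound), and the exponent $\tfrac{2}{13}\theta(\gamma,\alpha,\lambda)$ arising from optimizing the trade-off between Gaussian approximation error, smoothing window width, and the rate at which the joint characteristic function factorizes.

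The principal technical obstacle is the uniform joint characteristic-function estimate with quantitative dependence on the $t_j$, since Lemma \ref{lem4.1} is stated for fixed Lipschitz functions and must be upgraded to control the exponential factors on the smoothing window. A secondary difficulty is reconciling the four distinct exponents listed in $\theta(\gamma,\alpha,\lambda)$ through the final Esseen inversion, which produces the prefactor $\tfrac{2}{13}$.
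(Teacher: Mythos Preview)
Your decomposition and the treatment of the individual pieces $\zeta_{0,N}$ and $\zeta_{i,N}$, $i>k$, match the paper closely, and the quadratic-variation exponents you list are exactly those appearing in the paper's Lemma~\ref{lem6.4}. The gap is in the combining step.

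Your plan to show approximate factorization
\[
\Bigl| E\exp\bigl(\textstyle\sum_{j} it_j\zeta_{j,N}\bigr) - \prod_{j} E\exp(it_j\zeta_{j,N})\Bigr|\le CN^{-\rho}
\]
via a Lemma~\ref{lem4.1}-type block argument cannot work as stated, because the dependency sets of the $\zeta_{j,N}$ are \emph{not} separated. For $j>k$ the summand $F_j(X(q_1(n)),\dots,X(q_j(n)))$ depends on $X$ at all of $q_1(n),\dots,q_j(n)$, not just at $q_j(n)$; in particular every $\zeta_{j,N}$ depends on $X(n)$ for $n=1,\dots,N$ through the first argument $q_1(n)=n$. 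So the ``index sets $\{q_j(n):n\le N\}$'' you propose to separate are not the relevant dependency sets, and Lemma~\ref{lem4.1} does not deliver factorization. What is true is only that the \emph{covariances} $E\zeta_{i,N}\zeta_{j,N}$ vanish asymptotically, thanks to (\ref{2.14}); this is far weaker than the independence needed for Esseen's inequality.

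The paper avoids this by a conditioning argument rather than an independence argument. After discarding the first $[N^\gamma]$ terms of each $\zeta_{j,N}$ (controlled by (\ref{6.10})), the remaining martingale $\hat M_{j,N}$ is adapted to $\{\cF_{-\infty,q_j(n)+2^{u(N)}}\}_n$, and the hypothesis $q_j([n^\gamma])\ge q_{j-1}(n)$ ensures these filtrations are \emph{nested}: the entire $(j-1)$-st martingale lives in the initial $\sig$-algebra of the $j$-th. The key Lemma~\ref{lem6.5} discretizes the partial sum $Y^{(s-1)}=\frac1{\sqrt N}\sum_{i<s}M_N^{(i)}$ to a random variable $Y^{(s-1)}_{L,U}$ taking $O(LU)$ values, and on each atom $\{Y^{(s-1)}_{L,U}=y\}$ (which lies in the starting $\sig$-algebra of the $s$-th filtration) the sequence $M^{(s)}$ remains a martingale under the conditional law, so the Hall--Heyde bound (\ref{5.15}) applies conditionally. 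Summing over the atoms and iterating in $s$ gives the recursion (\ref{6.12}); optimizing the discretization level $L=U=[N^{4\te/13}]$ yields the exponent $\tfrac{2}{13}\te$. Nothing like approximate independence of the $\zeta_{j,N}$ is ever used or needed.
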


As in Theorem \ref{thm2.4} the main step in the proof of  Theorem \ref{thm6.3}
 is the construction
of martingale approximations and their estimates. Still,
unlike in the case $k=\ell$ we
cannot provide here approximations of the whole process $\sqrt n\xi_n(1)$ by a
single martingale.
Thus, we will use separately the martingale approximation for $\sqrt n
\sum_{i=1}^k\xi_{i,n}(1)$, $n\geq1$
constructed in Section \ref{sec5} and the martingale approximations of each
$\sqrt n\xi_{i,n}$, $n\geq1$,
$i=k+1,...,\ell$ relying on Lemma \ref{lem6.5} below.

For any $i=k+1,...,\ell$ and fixed $u,N\in\bbN$, we  construct the
 martingales
$(M_{i}^{(u)})_{r}=\sum_{n=1}^rW_{i,q_i(n),2^u}$   with respect to
 the filtration  $\{\cF_{-\infty,q_i(n)+2^u}\}_{n\geq1}$, where similarly
 to Section \ref{sec5},
\[
R_{i,q_i(v),u}=\sum_{s\geq v+1}E[Y_{i,q_i(s),2^u}|\cF_{-\infty,  q_i(v)+2^u}]
\mbox{ and }
\]
\[
W_{i,q_{i}(n),2^{u}}=Y_{i,q_{i}(n),2^{u}}+R_{i,q_{i}(n),u}-R_{i.q_{i}(n-1),u}.
\]

Let $u(N)=[\frac{\log_2(N)}{2(\la+4)}]$.
Using  techniques similar to Section \ref{sec5} we obtain that for any $N>L$
and  $i>k$,
\begin{eqnarray}\label{6.9}
&\frac1{\sqrt{N-L}}||\sum_{n=L}^NY_{i,q_i(n)}-((M_i^{(u(N))})_N
-(M_i^{(u(N))})_L)||_b\leq \\
&C(\frac{2^{u(N)}}{\sqrt{N-L}}+\sum_{r=2^{u(N)}}^\infty
(\be(q,r))^{\del})\leq C_{1}
(\frac{N^{\frac1{2(\la+4)}}}{\sqrt{N-L}}+N^{-\frac\la{2(\la+4)}})\nonumber
\end{eqnarray}
where $C_1>0$ depends only on the initial parameters and the expressions
 (\ref{2.8})--(\ref{2.9}).

Next, observe that the proof of  Lemma \ref{lem5.2} also works  for our setup
and so,
\begin{equation}\label{6.10}
||\sum_{n=1}^{[N^\gam]}Y_{i,q_i(n)}||_{2}\leq CN^{\frac\gam2}
\end{equation}
where $C>0$ depends only on the initial parameters and the expressions
(\ref{2.8})--(\ref{2.9}).
Hence, applying Lemma \ref{lem3.3}  we can replace  $\sum_{n=1}^{N}Y_{i,
q_{i}(n)}$ by $\sum_{n=[N^\gam]}^NY_{i,q_i(n)}$with an error estimated by
(\ref{6.10}). Thus, for $i>k$ and fixed $N$ we
consider the martingales $(\hat M_{i,N})$ (with respect  to the filtration
$\{\cF_{-\infty,q_i(n+[N^\gam])+2^{u(N)}}\}_{n\geq1}$), where  $(\hat
M_{i,N})_r=(M_i^{(u(N))})_{[N^\gam]+r}-(M_i^{(u(N))})_{[N^\gam]}$
for   $N-[N^\gam]\geq r>0$ ,
 $(\hat M_{i,N})_r=\hat M_{i,N-[N^\gam]}$ for $r\geq N-[N^\gam]$.
As in the proof of Theorem \ref{thm2.4} quadratic variation estimates are
crucial. Combining methods of
Proposition 4.5 from \cite{KV1} and Lemmas \ref{lem5.3} and \ref{lem5.4} above
 we obtain the following result.
\begin{lemma}\label{lem6.4}
Suppose that Assumption 2.1 holds true with $\al,\la>1$ and that there
exists $\gam$ satisfying conditions of Theorem \ref{thm6.3}. Let
$k+1\leq i\leq\ell$ and $N,u\in\bbN$ such that  $2^u\leq N^{\frac18}$.
Set $Z_n=Z_n^{(i,u)}=W_{i,q_i(n),2^u}^2$. Then
\begin{equation*}
||\frac1N\sum_{n=1}^NZ_n-D_{i,i}||_2  \leq C(N^{-\tau(\al,\la,\gam)}+2^{-
\frac{\la u}2}+2^{2u}N^{-\frac14})
\end{equation*}
where $\tau(\al,\la,\gam)=\frac12\min(1-\gam,\frac{(\min(\al,\la)-1)}2,
\frac{2\gam\min(\al,\la)}{2+\gam\min(\al,\la)})$
and $C$ depends only on the initial parameters and on (\ref{2.8})-(\ref{2.9}).
Furthermore, let $1\leq i\leq j\leq k$ and set $Z_n=W_{i,n,2^u}W_{i,n,2^u}$.
Then
\begin{equation*}
||\frac1N\sum_{n=1}^{iN}Z_n-iD_{i,j}||_2  \leq C(N^{-\frac{(\min(\al,\la)-1)}4}+2^{-
\frac{\la u}2}+2^{2u}N^{-\frac14}).
\end{equation*}
\end{lemma}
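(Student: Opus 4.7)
The plan is to adapt the block-and-martingale arguments of Lemmas \ref{lem5.3}--\ref{lem5.4} to the nonlinear setting, identifying the limit of the block expectations via Proposition 4.5 of \cite{KV1}. The second (linear) assertion is essentially a restatement of Proposition \ref{prop5.1}: the hypothesis $2^u\le N^{1/8}$ replaces the specific choice $u=u(N)$ made in Section \ref{subsec5.2}, but the proof of Lemmas \ref{lem5.3}--\ref{lem5.4} goes through verbatim with the terms $2^{-\la u/2}$ and $2^{2u}N^{-1/4}$ tracked explicitly instead of being collapsed into a single power of $N$. So I will concentrate on the first assertion.

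For the first assertion I would proceed in four steps. First, pass from the $W$-squares to $Y$-squares: using $W_{i,q_i(n),2^u}=Y_{i,q_i(n),2^u}+R_{i,q_i(n),u}-R_{i,q_i(n-1),u}$ and the bound $\|R_{i,q_i(n),u}\|_b\le Cu$ from (\ref{5.1.1})--(\ref{5.1.2}), telescoping inside each block produces an error of order $2^{2u}\cdot l$ per block (as in (\ref{5.12})), and then replacing $Y_{i,q_i(n),2^u}$ by $Y_{i,q_i(n)}$ contributes $O(2^{-\la u})$ in $L^2$ per variable by (\ref{5.2}). Second, introduce a block decomposition with $l=[N^{\vt}]$ for an exponent $\vt$ to be chosen, set $B_r=(l(r-1),lr]\cap\bbN$ and
\[
\hat V_r=\big(\sum_{n\in B_r}Y_{i,q_i(n)}\big)^2,\qquad U_r=\hat V_r-E\hat V_r,
\]
so that $\|N^{-1}\sum_r U_r\|_2^2$ reduces to bounding $\sum_{r_1,r_2}\mathrm{cov}(\hat V_{r_1},\hat V_{r_2})$.

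Third, use the growth hypotheses to decorrelate distant blocks. For $r_1<r_2$ the gap between any two $q_i$-values in $B_{r_1}$ and $B_{r_2}$ is at least $(lr_1)^{\gam}$ by $q_i(n+1)-q_i(n)\ge n^{\gam}$, while $q_i([n^{\gam}])\ge q_{i-1}(n)$ guarantees that within each block no $q_j$-value ($j<i$) falls within distance $(lr_s)^{\gam}$ of a $q_i$-value. Hence Lemma \ref{lem4.1} gives
\[
|\mathrm{cov}(\hat V_{r_1},\hat V_{r_2})|\le Cl^2\big(\varpi_{q,p}(c(lr_1)^{\gam})+\be(q,c(lr_1)^{\gam})^{\del}\big),
\]
and summing against Assumption \ref{ass2.1} ($\al,\la>1$) yields a contribution $O(N^{\vt}\cdot N^{-\gam\min(\al,\la)})$ from off-diagonal pairs, while the diagonal and near-diagonal contributions give $O(N^{-(1-\vt)})$ via Lemma \ref{lem5.2}. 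Fourth, for the centering, Proposition 4.5 of \cite{KV1} supplies the convergence of $N^{-1}\sum_{n}EY_{i,q_i(n)}^2$ to $D_{i,i}$ with explicit rate controlled by (\ref{2.8})--(\ref{2.9}) and the same growth conditions; combined with the previous steps this gives the stated $L^2$-bound.

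The main obstacle is the optimization in Step 3: the block size $\vt$ must simultaneously make the number $N^{1-\vt}$ of blocks large enough for averaging, keep the per-block $W$-vs-$Y$ error $2^{2u}N^{\vt}$ controlled, and leave enough separation $N^{\vt\gam}$ between blocks to exploit mixing at rate $\min(\al,\la)$. Balancing $N^{-(1-\vt)/2}$ against $N^{-\vt\gam\min(\al,\la)/2}$ forces $\vt=1/(1+\gam\min(\al,\la))$ and produces the exponent $\frac{2\gam\min(\al,\la)}{2+\gam\min(\al,\la)}$ appearing in $\tau(\al,\la,\gam)$; the remaining exponents $\tfrac12(1-\gam)$ and $\tfrac12(\min(\al,\la)-1)/2$ come respectively from the gap condition $q_i(n+1)-q_i(n)\ge n^{\gam}$ controlling how $Y_{i,q_i(n),2^u}$ compares with $Y_{i,q_i(n)}$ and from the rate in Proposition 4.5 of \cite{KV1}.
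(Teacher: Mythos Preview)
Your proposal is correct and matches the paper's approach: the paper gives no detailed proof of this lemma, stating only that it follows by ``combining methods of Proposition 4.5 from \cite{KV1} and Lemmas \ref{lem5.3} and \ref{lem5.4},'' and your four-step plan is exactly that combination---adapt the block decomposition and martingale-difference orthogonality of Lemmas \ref{lem5.3}--\ref{lem5.4} to the nonlinear indices $q_i(n)$, then invoke Proposition 4.5 of \cite{KV1} for the limit $D_{i,i}$. Two minor slips to watch when you fill in the details: the bound from (\ref{5.1.1})--(\ref{5.1.2}) is $\|R_{i,q_i(n),u}\|_b\le C\cdot 2^u$ (not $Cu$), and the per-block error in (\ref{5.12}) is $C\sqrt{l}\,2^u$ rather than $2^{2u}l$; also your attribution of the exponent $\tfrac12(1-\gam)$ to the $Y_{i,q_i(n),2^u}$-vs-$Y_{i,q_i(n)}$ comparison is off (that comparison produces the $2^{-\la u/2}$ term)---the $\tfrac12(1-\gam)$ more naturally comes from the initial range of $n$ where the separation $q_i(n)-q_{i-1}(n)$ is not yet large.
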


The use of approximations by several martingales as explained above works in
the proof
of Theorem \ref{thm6.3} in view of the following result which is the main
additional argument
needed in comparison to the proof of Theorem \ref{thm2.4}.
Let $g_1(n)<...<g_l(n)$ be positive and
strictly increasing functions taking integer values. Set
\[
K_N=\max_{1<j\leq l}(\min\{1\leq m\leq N:  g_j(m)>g_{j-1}(N)\}).
\]
\begin{lemma}\label{lem6.5}
Let the $\cF_{n,m}$  be a nested family of $\sig-$algebras (see Section
\ref{sec2}).
Let $N\in\bbN$ and suppose that $W^{(i)}$, $i=1,...,l$ is a martingale
 differences sequence with respect to the filtration $\{\cF_{-\infty,
g_i(n+K_{i,N})}\}_{n\geq1}$
where $K_{1,N}=0$ and $K_{i,N}=K_N$ if  $i>1$. Let
$M^{(i)}$  be  the corresponding martingales. Suppose that $\max\{||W_{n}^{(i)}|
|_4
: n\leq N, i\leq l\}\leq C_12^u$ for some positive constant $C_1$
  independent of $N$ and $u\geq0$ such that
$2^u\leq N^\zeta$   for some  $0<\zeta<\frac14$. Let $d_1,...,d_l>0$ and
assume that
\[
\frac1{\sqrt N}\max\{||M_N^{(i)}||_2: i=1,...,l\}\leq C_2
\]
and that $\max\{A_{2,s}: s=1,...,l\}\leq C_3N^{-\te}$ for some $0<\te<1$,
where
\[
A_{2,s}=||\frac1N\sum_{n=1}^N\left(W_n^{(s)}\right)^2-d_s^2||_2
\]
and $C_2,C_3>0$ are  positive constants independent of $N$.
Let $\eta_i,\,i=1,...,l$ be independent  and centered normal random variables having
 variances $d_i^2$. Then
\begin{eqnarray*}
d_K(\frac1{\sqrt N}\sum_{i=1}^l M_N^{(i)},\sum_{i=1}^l
\eta_i)\leq C(1+(\sum_{i=1}^2 d_i^2)^{-\frac12})^{l-1}B
N^{-\frac2{65}\min(5\te,26(1-2\zeta)-8\te)}
\end{eqnarray*}
where $C>0$ is an absolute constant, $B=B(l,C_1,C_2,C_3)=l\max(C_3^\frac25,
l(1+C_2),D\max(C_1^\frac43,C_1^\frac45))$, $D=\max\{D_s:1\leq s\leq l\}$
and $D_s=\max(d_s^{-\frac43},d_s^{-\frac45})$.
\end{lemma}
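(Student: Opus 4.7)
The plan is to proceed by induction on $l$. For $l=1$ the result reduces to the Berry--Esseen bound of Theorem 2 in \cite{HH} applied to the single martingale $M^{(1)}$: the hypotheses $\|W_n^{(1)}\|_4\leq C_1 2^u$, $\|M_N^{(1)}\|_2\leq C_2\sqrt N$ and $A_{2,1}\leq C_3N^{-\te}$ feed directly into the three terms of Hall--Heyde's bound (cf.\ (\ref{5.15})). With $2^u\leq N^\zeta$ this gives an estimate of the required shape with exponent governed by $\min((1-4\zeta)/3,(2-4\zeta)/5,2\te/5)$ and with the factor $D_1=\max(d_1^{-4/3},d_1^{-4/5})$, up to a further $(1+d_1^{-1})$ coming from passing between the standardized normalization in \cite{HH} and the $\sqrt N$ normalization by Lemma \ref{lem3.3}.

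For the inductive step, write $\xi_s=N^{-1/2}\sum_{i=1}^s M_N^{(i)}$ and $\eta^{(s)}=\sum_{i=1}^s\eta_i$. The key observation is that by the definition of $K_N$ one has $g_l(K_N)>g_{l-1}(N)$, so the $\sig$-algebra $\cH_l=\cF_{-\infty,g_l(K_N)}$ contains (up to an admissible shift) enough of $\xi_{l-1}$, while $\{W_n^{(l)}\}_{n=1}^N$ is still a martingale difference sequence with respect to $\{\cF_{-\infty,g_l(n+K_N)}\}_{n\geq 1}$ after conditioning on $\cH_l$. Conditional quadratic-variation and fourth-moment estimates inherited from the hypotheses allow a conditional version of the Hall--Heyde argument, yielding for each $t$
\[
\bigl|\,E\bigl[e^{itM_N^{(l)}/\sqrt N}\,\bigm|\,\cH_l\bigr]-e^{-t^2d_l^2/2}\,\bigr|\leq \ve_l(t)
\]
with $\ve_l(t)$ controlled by the single-martingale Berry--Esseen quantities for index $l$. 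Setting $\vf_s(t)=E\,e^{it\xi_s}$ and $\psi_s(t)=\prod_{i\leq s}e^{-t^2d_i^2/2}$, taking expectations and using that $\xi_{l-1}$ is $\cH_l$-measurable yields the recursion
\[
|\vf_l(t)-\psi_l(t)|\leq e^{-t^2d_l^2/2}|\vf_{l-1}(t)-\psi_{l-1}(t)|+E|\ve_l(t)|,
\]
which iterates to a clean characteristic-function estimate for $\xi_l-\eta^{(l)}$. Esseen's smoothing inequality then converts this into a Kolmogorov bound, with the max-density factor $(2\pi\sum_{i\leq l}d_i^2)^{-1/2}$ appearing; together with the $(1+4c)$ prefactor from Lemma \ref{lem3.3} (whose $c$ is precisely that max density) this generates the accumulating prefactor $(1+(\sum d_i^2)^{-1/2})^{l-1}$ claimed in the statement.

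The hard part is the conditional Hall--Heyde step: $\cH_l$ must be large enough to contain (essentially all of) $\xi_{l-1}$ but small enough for the martingale structure of $\{W_n^{(l)}\}$ to survive conditioning. This is where the gap hypothesis $q_l([n^\gam])\geq q_{l-1}(n)$ of Theorem \ref{thm6.3} enters through the choice of $K_N$: it gives the inclusion $\cF_{-\infty,g_{l-1}(N)}\subseteq\cH_l$ with the right margin, so that residual pieces of $\xi_{l-1}$ outside $\cH_l$ can be absorbed in the single-martingale error for step $l-1$. The final rate $N^{-(2/65)\min(5\te,26(1-2\zeta)-8\te)}$ then emerges from optimizing the cutoff $T$ in Esseen's inequality against the three Hall--Heyde terms (rates $\tfrac{1-4\zeta}{3}$, $\tfrac{2-4\zeta}{5}$, $\tfrac{2\te}{5}$) and from the exponent reduction $\tfrac{a}{a+1}=\tfrac{2}{3}$ each time Lemma \ref{lem3.3} with $a=2$ is invoked to convert an $L^2$ estimate (for the quadratic variation, and for $\xi_{l-1}-\eta^{(l-1)}$) into a Kolmogorov estimate; the specific linear combination $\min(5\te,26(1-2\zeta)-8\te)$ with denominator $65$ is what this balance produces.
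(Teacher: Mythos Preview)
Your inductive skeleton is right --- one does recurse on $s$ by separating $M_N^{(s)}$ from $Y^{(s-1)}=N^{-1/2}\sum_{i<s}M_N^{(i)}$ via conditioning --- but the execution in the paper is quite different from what you describe, and your version has a real gap at the conditional step.

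The paper does \emph{not} work with characteristic functions or Esseen's smoothing inequality. Instead it discretizes $Y^{(s-1)}$: for integers $U,L$ it replaces $Y^{(s-1)}$ by the step variable
\[
Y^{(s-1)}_{L,U}=\sum_{k=-LU}^{LU}\tfrac{k}{U}\,\bbI_{\{k/U\le Y^{(s-1)}<(k+1)/U,\ |Y^{(s-1)}|\le L\}},
\]
controls the replacement error in $L^1$ via (\ref{6.13}), and passes between $d_K$, the L\'evy metric $d_L$ and the Prokhorov metric using (\ref{6.14})--(\ref{6.15}). The point of discretization is that $Y^{(s-1)}_{L,U}$ takes at most $3LU$ values, so conditioning becomes conditioning on atoms $A_y=\{Y^{(s-1)}_{L,U}=y\}$ with $P(A_y)>0$. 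Under the conditional measure $P_{A_y}$ the sequence $\{W_n^{(s)}\}$ is still a martingale difference sequence (because $A_y$ lies in the bottom $\sigma$-algebra of the filtration), and crucially one has the crude bound $E_{P_{A_y}}|Z|\le P(A_y)^{-1}E|Z|$. This is how the unconditional hypotheses $A_{2,s}\le C_3N^{-\te}$ and $\|W_n^{(s)}\|_4\le C_12^u$ are fed into the Hall--Heyde estimate (\ref{5.15}) under $P_{A_y}$: the errors blow up by factors $P(A_y)^{-1}$, but summing over $y$ with the H\"older-type inequality $\sum c_i^t\le n^{1-t}(\sum c_i)^t$ and $\sum_y P(A_y)=1$ recovers a bound in terms of $LU$ times the original moment quantities. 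Optimizing $U=L=[N^{4\te/13}]$ is what produces the exponent with denominator $65$.

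Your proposal skips the discretization and asserts directly that ``conditional quadratic-variation and fourth-moment estimates inherited from the hypotheses'' give a pointwise bound $|E[e^{itM_N^{(s)}/\sqrt N}\mid\cH_s]-e^{-t^2d_s^2/2}|\le\ve_s(t)$. This is the gap: the hypotheses are unconditional $L^2$ and $L^4$ bounds, and there is no mechanism in your sketch for converting $\|N^{-1}\sum (W_n^{(s)})^2-d_s^2\|_2\le C_3N^{-\te}$ into a useful bound on the \emph{conditional} quadratic-variation error given a general (non-atomic) $\sigma$-algebra $\cH_s$. The discretization is precisely the device that makes this conversion possible, at the price of the extra $LU$ factors. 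Your attribution of the exponent $2/65$ to Esseen's cutoff $T$ and to Lemma~\ref{lem3.3} with $a=2$ is therefore also off: Lemma~\ref{lem3.3} is not used in this proof at all, and the rate comes entirely from balancing the discretization parameters against the $P(A_y)^{-1}$ blow-up in the conditional Hall--Heyde terms.
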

\begin{proof}
First, observe  that  if $Z_1,Z_2$ and $Z_3,Z_4$ are pairs of independent
 random variables then
\begin{equation}\label{6.11}
d_K(Z_1+Z_2,Z_3+Z_4)\leq d_K(Z_1,Z_3)+d_K(Z_2,Z_4).
\end{equation}
By taking the product measure we can always assume that $\{W_n^{(i)},\,
 n\geq 1\}$ and $\{\eta_i\}_{i=1}^l$ are defined on the same probability
 space and are independent from each other. For any $s=1,...,l$ set
\[
Y^{(s)}=\frac1{\sqrt N}\sum_{i=1}^s M_N^{(i)}\mbox{ and } \del(s)=d_K(Y^{(s)},
\sum_{i=1}^s\eta_i).
\]

The main step of the proof is showing that  for any $2\leq s\leq l$  and $U,L
\in\bbN$
\begin{eqnarray}\label{6.12}
&\hskip0.4cm\del(s)\leq C\big((1+(\sum_{i=1}^s d_i^2)^{-\frac12})(\del(s-1)+D_s(
(\frac{C_1^4N^{4\zeta}}{N^2}UL)^{\frac15}+(\frac{C_1^4N^{4\zeta}}{N^2}UL)^{\frac13})\\
&+(C_3N^{-\theta}UL)^\frac25+l(1+C_2)(\frac1U+\frac1L)^\frac12\big) \nonumber
\end{eqnarray}
where $C>0$ is an absolute constant.

Indeed, for any random variable $Z$ and $U,L\in\bbN$ set
\begin{equation*}
Z_{L,U}=\sum_{k=-LU}^{LU}\frac kU\bbI_{\{{\frac kU\leq Z<\frac{k+1}U\mbox{ and
 }|Z|\leq L}\}}.
\end{equation*}
Observe that by the  H\" older inequality, for any $q>1$,
\begin{equation}\label{6.13}
||Z-Z_{L.U}||_1\leq\frac1U+||E[|Z|\bbI_{\{|Z|>L\}}||_1\leq\frac1U+\frac
{E[|Z|^q]}{L^{q-1}}.
\end{equation}

In order to proceed  we need some relations between probability metrics which
 can be found in \cite{PMT}.
Denote by $d_P$ the Prokhorov metric on $\bbR$ and by $d_L$ the Levi metric
on $\bbR$ (see \cite{PMT}).
Then for any distribution functions $F$ and $G$,
\begin{equation}\label{6.14}
d_L(F,G)\leq d_K(F,G)\leq (1+\sup_{x\in\bbR}|G'(x)|)d_L(F,G)\mbox{ and }
d_L(F,G)\leq d_P(F,G)
\end{equation}
where the right hand side of the first inequality holds true if $G$  is
differentiable.
Moreover, by the  the Markov inequality and some standard estimates, one
can show that
 for any random variables $X$ and $Y$ which are defined on the same probability
space with distribution functions $F$ and $G$,
\begin{equation}\label{6.15}
d_P(F,G)=d_P(X,Y)\leq 2||X-Y||_1^{\frac12}.
\end{equation}

Proceeding with the proof of (\ref{6.12}) we observe that by (\ref{6.14}),
\begin{eqnarray*}
&\del(s)=d_K(Y^{(s-1)}+\frac1{\sqrt N}M_N^{(s)},\sum_{i=1}^s\eta_i)\leq\\
&(1+(\sum_{i=1}^s d_i^2)^{-\frac12})d_L(Y^{(s-1)}+\frac1
{\sqrt N}M_N^{(s)},\sum_{i=1}^s\eta_i).
\end{eqnarray*}
By triangle inequality and then by  (\ref{6.14}) and (\ref{6.15}),
\begin{eqnarray*}
&d_L(Y^{(s-1)}+\frac1{\sqrt N}M_N^{(s)},\sum_{i=1}^s\eta_i)\leq \\
&d_L(Y^{(s-1)}+\frac1{\sqrt N}M_N^{(s)},Y_{L,U}^{(s-1)}+\frac1
{\sqrt N}M_N^{(s)})+d_L(Y_{L,U}^{(s-1)}+\frac1{\sqrt N}M_N^{(s)},Y_{L,U}^{(s-1)}
+\eta_s)\\
&+d_L(Y_{L,U}^{(s-1)}+\eta_s,Y^{(s-1)}+\eta_s)+d_L(Y^{(s-1)}
+\eta_s,\sum_{i=1}^s\eta_i)\leq\\
&4||Y^{(s-1)}-Y_{L,U}^{(s-1)}||_{1}^{\frac12}+d_K(Y_{L,U}^{(s-1)}+\frac1
{\sqrt N}M_N^{(s)},Y_{L,U}^{(s-1)}+\eta_s)+\\
&d_K(Y^{(s-1)}+\eta_s,\sum_{i=1}^s\eta_i)
=I_1+I_2+I_3.
\end{eqnarray*}
By (\ref{6.11}), since $Y^{(s-1)}$ and $\eta_i, i=1,...s$
are independent random variables, $I_3\leq\del(s-1)$.  By  (\ref{6.13})
applied with $q=2$,
\[
I_1\leq4l(1+\frac1{\sqrt N}\max_{1\leq i\leq l}||M_{N}^{(i)}||_2)(\frac1U+
\frac1L)^\frac12\leq4l(1+C_2)(\frac1U+\frac1L)^\frac12.
\]
Next, for any measurable set $A$ satisfying $P(A)>0$ let $P_A=P(\cdot|A)$
be the corresponding conditional probability.
 For any probability measure $\mu$ we denote the expectation with respect to
 it
 by $E_\mu$.
For any $y\in\bbR$ set $A_y=\{Y_{L,U}^{(s-1)}=y\}$ and
$\Gam=\{ y:\, P(A_y)>0\}$. Then, for any $a\in\bbR$ taking into account that
 $\Gam$ is a finite set,
\begin{equation}\label{6.16}
P(Y_{L,U}^{(s-1)}+\frac1{\sqrt N}M_N^{(s)}\leq a)=\sum_{y\in\Gam}P(A_y)P_{A_y}
(\frac1{\sqrt N}M_N^{(s)}\leq a-y).
\end{equation}
If $A\in\cG$ then $E_{P_{A}}[Z|\cG]=\bbI_A E_P[Z|\cG]$
and hence $\{M_r^{(s)}\}_{r\geq 1}$ is also a martingale with respect to the
measure $P_{A_y}$.  Next, we apply (\ref{5.15}) with $\del=1, p=2$ and use
 that $E_{P_A}[|Z|]=\frac1{P(A)}E_P[|Z|\bbI_A]\leq
\frac1{P(A)}E_P[|Z|]$
which yields
\begin{equation}\label{6.17}
|P_{A_y}(\frac1{\sqrt N}M_N^{(s)}\leq a-y)-P(\eta_s\leq a-y)|\leq
AD_s (c^{\frac13}+c^{\frac15}+P^{-\frac25}(A_y)A_{2,s}^{\frac25}),
\end{equation}
where $c=P^{-1}(A_y)C_1^42^{4u}N^{-2}$ and $A>0$ is an absolute
constant. Observe that
 cardinality$(\Gam)\leq3LU$. This together with  (\ref{6.16}) and (\ref{6.17}),
the upper bounds for $A_{2,s}$ and $2^u$
 and the inequality
$\sum_{i=1}^n c_i^t\leq n^{1-t}(\sum _{i=1}^n c_i)^t$ for any $c_i\geq0$ and
$0\leq t\leq1$
yields
\begin{equation*}
I_2\leq A'D_s\big((\frac{C_1^4N^{4\zeta}}{N^2}UL)^{\frac13}+(\frac{C_1^4N^{4\zeta}}
{N^2}UL)^{\frac15}+(C_3ULN^{-\te})^\frac25\big)
\end{equation*}
where $A'$ is an absolute constant and (\ref{6.12}) follows.
Finally, applying (\ref{5.15})  with the martingale $M_N^{(1)}$,
 taking into consideration that $2^u\leq N^{\zeta}\leq\sqrt N$ and
 $A_{2,1}\leq C_3N^{-\te}$ we obtain
\[
\del(1)\leq A\max(C_1^\frac43,C_1^\frac45)D_1
((\frac{N^{4\zeta}}{N^2})^{\frac15}+(C_3N^{-\te})^\frac25).
\]
 Making a repetitive use of (\ref{6.12})
for $s=2,3,...,l$ yields
\begin{eqnarray*}
&\del(l)=d_K(\frac1{\sqrt N}\sum_{i=1}^l M_N^{(i)},\sum_{i=1}^l\eta_i)\leq
Cl(1+(\sum_{i=1}^2d_i^2)^{-\frac12})^{l-1}\big(D\max(C_1^\frac43,C_1^\frac45)\times\\
&((\frac{N^{4\zeta}}{N^2}UL)^{\frac15}+(\frac{N^{4\zeta}}{N^2}UL)^{\frac13})+
(C_3ULN^{-\te})^\frac25+l(1+C_2)(\frac1U+\frac1L)^\frac12\big).
\end{eqnarray*}
The lemma follows by taking  $U=L=[N^{\frac4{13}\te}]$
(the power $\frac{4\te}{13}$ is obtained by considering  $L=U=[N^v]$
and then comparing  the obtained order of $N$ in the last two above summands).
\end{proof}

In order to prove  Theorem \ref{thm6.3}  first apply Lemma \ref{lem3.3} taking
into consideration (\ref{6.9})-(\ref{6.10}). Then apply
Lemmas \ref{lem6.5} and \ref{lem6.4} with the martingales $\hat{M}_{i,N}$ for
$i$'s  such that $D_{i,i}>0$ (where $D_{0,0}=\sig_0$) with $\zeta=\frac1{2(\la+4)}$ and
$\te=\min(\tau(\al,\la,\gam),\frac\zeta2)<2(1-2\zeta)$.

\subsection{Continuous time results}\label{sec6.3}
Here we explain how to obtain convergence rates in the Levy-Prokhorov metric
(\cite{Bil} Ch. 1, Sec. 6) in the case $k=\ell$ for the continuous time
processes $\xi_N(t)$ defined in Section \ref{sec2}. Such
results when $k<\ell$ will not be dealt with here since it is not clear how to
adapt Lemma \ref{lem6.5} for continuous time martingales, and so a different
approach should be employed. It also possible to obtain such rates
for the one dimensional processes $\xi_{i,N}(\cdot)$ for $i=k+1,...,\ell$.
 First, relying on the H\" older inequality for any random variables
 $\{X_i\}_{i\leq n}$,
\[
E[\underset{1\leq i\leq n}{\max}\{|X_i|\}]\leq n^{\frac1q}\max_{1\leq i
\leq n}\{||X_i||_q\}
\]
we obtain by  (\ref{5.3}) that for any $1\leq i\leq\ell$ the martingale
approximation estimates is  in the form
\begin{equation*}
E||M_{i,N}^{(u)}(t)-\xi_{i,N}(t)||_{T,\infty}\leq C(\frac{2^u}{\sqrt N}
(M_i(NT))^{\frac1b}+T\sum_{n=2^{u-1}}^\infty(\be(q,n))^\del)
\end{equation*}
where $||f||_{T,\infty}=\sup\{|f(t)|: t\in[0,T]\}$ and
 $M_{i,N}^{(u)}(t)=\frac1{\sqrt N}\sum_{n=1}^{[Nt]}W_{i,q_i(R_i(n)),2^u}$
with $R_i(n)=\frac ni$ for  $i\leq k$ and $R_i(n)=n$ for $i>k$.
Concerning quadratic variation estimates we obtain the following result.

\begin{lemma}\label{lem6.6}
Suppose that Assumption \ref{ass2.1} is satisfied with some $\al>1,\la>2$
and $b\geq 4$
and that there exists $\gam$ as in Theorem
\ref{thm6.3}. Let $1\leq i,j\leq\ell$ such that $1\leq i\leq j\leq k$
or $i=j>k$ and $u,N\in\bbN$ such that $2^u<N^{\frac1{20}}$. Let $T>0$ and
 $A_T=\sqrt T(T+\frac1T)$. Then
\begin{eqnarray*}
&E||\frac1N\sum_{n=1}^{[Nt]}Z_n-tD_{i,j}||_{T,\infty}\leq  CA_T(2^{-u(\frac
\la2-1)}+
2^uN^{-(\frac{\min(\al,\la)-1}2)}+\\
&2^{2u}N^{-\frac1{10}}+\bbI_{\{i=j>k\}}(2^u N^
{-\min(\frac{\gam\min(\al,\la)}{2+\gam\min(\al,\la)},\frac{1-\gam}5)}))
\end{eqnarray*}
 where $D_{i.j}$ were introduced in Theorem \ref{thm2.2},
 $C$ depends only on the initial parameters and the expressions
(\ref{2.8})-(\ref{2.9}) and for $i,j\leq k$, $Z_n=W_{i,n,2^u}W_{j,n,2^u}$,
while for $i=j>k$, $Z_n=W_{i,q_i(n),2^u}^2$.
\end{lemma}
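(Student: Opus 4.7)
\textbf{Plan for Lemma \ref{lem6.6}.} I would write
\[
\frac{1}{N}\sum_{n=1}^{[Nt]} Z_n - tD_{i,j} = G_N(t) + H_N(t),
\]
with $G_N(t) = \frac{1}{N}\sum_{n=1}^{[Nt]}(Z_n - EZ_n)$ and $H_N(t) = \frac{1}{N}\sum_{n=1}^{[Nt]}EZ_n - tD_{i,j}$. The deterministic piece $H_N(t)$ is uniformly controlled in $t\in[0,T]$ by the same argument used around (\ref{5.14}) in the proof of Proposition \ref{prop5.1}, and by its counterpart in Lemma \ref{lem6.4} when $i=j>k$: replace the correlated expectation $EZ_n$ by its product-measure analogue via Lemma \ref{lem4.1}, and then use (\ref{5.2}) with $W_{\cdot,\cdot,2^u}$ in place of $Y_{\cdot,\cdot,2^u}$. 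Because $Z_n$ is a product of two truncated objects, the relevant tail power is halved, which both produces the term $2^{-u(\la/2-1)}$ and explains why the hypothesis $\la>2$ is imposed.

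For the stochastic piece $\|G_N\|_{T,\infty}$ I would use a chaining argument: fix a grid $t_\mu=\mu\Delta$, $0\leq\mu\leq[T/\Delta]$, with $\Delta=N^{-\rho}$ to be optimized, and split
\[
\|G_N\|_{T,\infty}\leq \max_{\mu}|G_N(t_\mu)|+\max_{\mu}\sup_{t_\mu\leq t\leq t_{\mu+1}}|G_N(t)-G_N(t_\mu)|.
\]
The maximum over the grid is handled by the elementary inequality $E\max_{1\leq \mu\leq n}|X_\mu|\leq n^{1/2}\max_\mu\|X_\mu\|_2$ combined with an $L^2$ bound on each $G_N(t_\mu)$ obtained by repeating the proofs of Lemmas \ref{lem5.3} and \ref{lem5.4} (resp.\ Lemma \ref{lem6.4} when $i=j>k$) with $[Nt_\mu]$ in place of $iN$ (resp.\ $N$); this yields a pointwise bound of order $N^{-\frac14}$ times the usual factor $2^{2u}$, and the maximal inequality converts this into $\sqrt{T/\Delta}\cdot 2^{2u}N^{-1/4}$. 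The oscillation term is bounded pathwise by $\frac1N\sum_{n=[Nt_\mu]+1}^{[Nt_{\mu+1}]}(|Z_n|+E|Z_n|)$, whose $L^1$ norm does not exceed $CT\Delta\cdot 2^{2u}$, using $\|W_{i,n,2^u}\|_4\leq C2^u$ from (\ref{5.1.1})--(\ref{5.1.2}).

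Balancing the two contributions, the optimal choice of $\Delta$ produces the $2^{2u}N^{-\frac1{10}}$ term, and bookkeeping of the $T$ factors through $\sqrt{T/\Delta}$ on one side and $T\Delta$ on the other assembles into the prefactor $A_T=\sqrt{T}(T+1/T)$. In the nonlinear case $i=j>k$, the additional step is first to truncate $\sum_{n=1}^{[Nt]}$ to $\sum_{n=[N^\gam]+1}^{[Nt]}$ uniformly in $t$ using the argument behind (\ref{6.10}) and then to invoke the first part of Lemma \ref{lem6.4}, which produces the extra indicator term in the statement. The main obstacle is the chaining step itself: passing from a pointwise $L^2$ rate of $N^{-1/4}$ to a uniform-in-$t$ estimate forces a loss of exponent down to $N^{-1/10}$ after optimization, and one must simultaneously track the three parameters $N$, $u$, $T$ so that the constants do not blow up; in the nonlinear case the uniform-in-$t$ truncation to $n\geq [N^\gam]$ adds a further layer requiring its own maximal bound.
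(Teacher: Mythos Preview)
Your plan matches the paper's sketch at the essential level: both routes rest on extending Lemmas~\ref{lem5.3}--\ref{lem5.4} (and Lemma~\ref{lem6.4} when $i=j>k$) to increments $\|\sum_{n=z+1}^{z+M}(Z_n-EZ_n)\|_2$, and then passing to a uniform-in-$t$ bound. The paper does the latter step by invoking Proposition~3 of \cite{MPU}, a ready-made maximal inequality, whereas you carry out a grid-chaining argument by hand. Both are legitimate; the citation is cleaner and avoids having to optimize over a grid parameter, while your approach is more self-contained but requires care in the bookkeeping.

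There is, however, a gap in your oscillation step. You bound $\sup_{t_\mu\le t\le t_{\mu+1}}|G_N(t)-G_N(t_\mu)|$ pathwise by $\frac1N\sum_{n\in I_\mu}(|Z_n|+E|Z_n|)$ and claim its $L^1$ norm is at most $CT\Delta\cdot 2^{2u}$; but the quantity you must control is the \emph{maximum over $\mu$} of this, and a maximum over roughly $T/\Delta$ blocks is not free. If you use the same crude inequality $E\max_\mu X_\mu\le (T/\Delta)^{1/2}\max_\mu\|X_\mu\|_2$ here, you get $C\sqrt{T\Delta}\,2^{2u}$, not $CT\Delta\,2^{2u}$; rebalancing against $\sqrt{T/\Delta}\cdot 2^{2u}N^{-1/4}$ then forces $\Delta\sim N^{-1/4}$ and yields $2^{2u}N^{-1/8}$ rather than $N^{-1/10}$. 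So the scheme survives (indeed with a slightly better exponent in that term), but your arithmetic with $\Delta$ and the $T$ factors needs to be redone. You should also make explicit that the pointwise bound on $\|G_N(t_\mu)\|_2$ from Lemmas~\ref{lem5.3}--\ref{lem5.4} carries several terms ($2^{-u\la/2}$, $N^{-(\min(\al,\la)-1)/4}$, \dots), each of which is inflated by the factor $\sqrt{T/\Delta}$ under your maximum; this is precisely what produces the extra powers of $2^u$ in the statement (for instance $2^{-u(\la/2-1)}=2^u\cdot 2^{-u\la/2}$), and the hypothesis $2^u<N^{1/20}$ is what caps these inflations.
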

In order to prove Lemma \ref{lem6.6} we have to improve somewhat Lemmas
\ref{lem5.3} and \ref{lem5.4} obtaining similar results for expressions of
 the form $||\sum_{n=z+1}^N(Z_n-EZ_n)||_2$ with $z<N$ using the same technique
 and then applying Proposition 3 from \cite{MPU}. Finally,  we can apply some
  Berry-Esseen type estimates (for instance, from \cite{Cou}) for continuous
  time martingales  which will yield corresponding estimates in our setup.

\subsection{Integral type expressions }\label{sec6.4}
Next, we discuss how to obtain similar results for expressions of the
form
\begin{equation*}
I_N(t)=\frac1{\sqrt N}\int_0^{Nt}(F(X(q_1(n)),...,X(q_\ell(n)))
-\brF)dt
\end{equation*}
where again $q_i(n)=in$ for all $i$.
 We introduce a reduction to the discrete time case where we can apply the
technique used above for expressions (\ref{1.2}).
First, we represent again the function $F$ in the form (\ref{2.11})
and write
\[
\xi_{i,N}(t)=\frac1{\sqrt N}\int_0^{Nt/i}F_i(X(q_1(t)),...,
X(q_i(t)))dt.
\]

We will use below the same notations as in  $(\ref{3.2})$ with $n$
replaced by $t$ (see Section 6 in \cite{KV1}). In order to apply our
discrete time technique set
$\tilde\xi_{i,N}(t)=\sum_{n=1}^{[Nt/i]}J_i(n)$
where $J_i(n)=\int_0^1Y_i(q_{i}(n+s))ds$. As in $(6.2)$
from \cite{KV1} applied with $\del=b-2$,
\[
P(\sup_{0\leq t\leq T}|\xi_{i,N}(t)-\tilde{\xi}_{i,N}(t)|>\ve)\leq
\frac C{(\ve\sqrt N)^{b-2}}
\]
which by taking $\ve=\ve_{N}=N^{-(\frac12\cdot\frac{b-2}
{b-1})}$
bounds the Levi-Prokhorov  and the Kolmogorov (uniform) distance between
$\tilde\xi_{i,N}$ and $\xi_{i,N}$
by $C\ve_N\leq CN^{-\frac13}$. We can approximate
$\tilde\xi_{i,N}(t)$
by
\[
\tilde\xi_{i,N,r}(t)=\sum_{n=1}^{[Nt/i]}J_{i,r}(n)
\]
where $J_{i,r}(n)=\int_0^1 Y_{i,r}(q_i(n+s))ds$ using an appropriate
version of (\ref{5.3}) (see Section 6 in \cite{KV1}). As
mentioned in \cite{KV1} we will have an appropriate continuous time
version of (\ref{5.2}) with the expressions
\[
R_{i,r}(m)=\sum_{l=m+1}^\infty E(J_{i,r}(l)|\cF_{-\infty,m+r})
\]
and the martingale differences $Z_{i,r}(m)=J_{i,r}(m)+R_{i,r}(m)-R_{i,r}(m-1)$.
In order to extend the results of Section 5 to the present case we should
have a continuous time version of Lemma \ref{lem4.1}. Such a version (adapted
to our specific setup) follows directly from the observation that the
bound from Lemma \ref{lem4.1} depends only on the gaps between the sets
$M_i$ there and the initial parameters together with the facts that for
 $T_j(s_j)=\big(X(q_1(n_j+s_j)),...,X(q_{i_j}(n_j+s_j))\big)$
and integrable functions $G$ and $G_{i}$, $i=1,...,m$,
\begin{eqnarray*}
&E\int_{[0,1]^m}G(T_1(s_1),...,T_k(s_k))ds_1...d_{s_m}\\
&=\int_{[0,1]^m} EG\left(T_1(s_1),...,T_k(s_k)\right)ds_1...
d_{s_m},\\
&E\prod_{j=1}^m\int_0^1 G_i\left(T_j(s_j)\right)ds_j=
E\int_{[0,1]^m}\prod_{j=1}^m G_i\left(T_j(s_j)\right)ds_1...ds_m\,\,
\mbox{and}\\
&\prod_{i=1}^m E[\int_0^1 X_i(s)ds]=E\prod_{i=1}^m\int_0^1 X_i(s)
ds=E\int_{[0,1]^m}\prod_{i=1}^m X_i(s_i)ds_1...ds_m
\end{eqnarray*}
where $X_1(\cdot)$,...,$X_k(\cdot)$ are independent random functions.

Using the technique from Section \ref{sec5} with appropriate modifications
we can prove a corresponding version of Proposition \ref{prop5.1} which
will yield some Berry-Esseen type convergence rates as above.
\qed

\bibliography{matz_nonarticles,matz_articles}
\bibliographystyle{alpha}

\end{document}